\newtheoremstyle{theoreme}
 {\topsep} 
 {\topsep} 
 {} 
 {0pt} 
 {\bfseries} 
 {\newline} 
 {3pt} 
 {} 
\newtheoremstyle{proposition}
{\topsep} 
 {\topsep} 
 {} 
 {0pt} 
 {\bfseries} 
 {\newline} 
 {3pt} 
 {} 
 \newtheoremstyle{lemma}
 {\topsep} 
 {\topsep} 
 {} 
 {0pt} 
 {\bfseries} 
 {\newline} 
 {3pt} 
 {} 
\newtheoremstyle{definition}
 {\topsep}
 {\topsep}
 {}
 {0pt}
 {\bfseries}
 {\newline}
 {3pt}
 {}
\newtheoremstyle{remarque}
 {\topsep}
 {\topsep}
 {}
 {0pt}
 {\bfseries}
 {\newline}
 {3pt}
 {} 
\theoremstyle{theoreme}
\newtheorem{Thm}{Theorem}[section]
\theoremstyle{lemma}
\newtheorem{Lem}[Thm]{Lemma}
\theoremstyle{proposition}
\newtheorem{Prop}[Thm]{Proposition}
\theoremstyle{definition}
\newtheorem{Def}[Thm]{Definition}
\theoremstyle{remarque}
\newtheorem{Rem}[Thm]{Remark}
\newtheorem{Ex}[Thm]{Example}
\newtheorem{Exs}[Thm]{Examples}
\newtheorem*{Notation}{Notation}
\newcommand\entiers{\mathbb{N}}
\newcommand\entiersrel{\mathbb{Z}}
\newcommand\N{\mathbb{N}}
\newcommand\Z{\mathbb{Z}}
\newcommand\chain{\mathcal{C}}
\newcommand\etoile{\precsim^{\ast}}
\newcommand\netoile{\precnsim^{\ast}}
\newcommand\ig{g^{-1}}
\newcommand\ih{h^{-1}}
\newcommand\ix{x^{-1}}
\newcommand\iy{y^{-1}}
\newcommand\iz{z^{-1}}
\newcommand\iu{u^{-1}}
\newcommand\iv{v^{-1}}
\newcommand\comment[1]{}
\newcommand\T{\mathfrak{T}}
\begin{document}
 \title{On the structure of groups endowed with a compatible C-relation\footnote{All results presented here are part of my PhD. 
   In that regard I thank my supervisors Salma Kuhlmann and Françoise Point for 
   the help and support they gave me 
   during its completion. I am particularly grateful to Salma Kuhlmann for taking the time to 
   read and discuss my paper, as her many comments and suggestions greatly improved it.
   I also thank Françoise Delon for her questions which motivated me to investigate the connections between 
   C-relations and quasi-orders. I would also like to thank the referee for their careful reading of my paper and for their useful comments.}}
 \author{Gabriel Lehéricy}
 \maketitle
 
 \begin{abstract}
  We use quasi-orders to describe the structure of C-groups. We do this by associating a quasi-order to each 
  compatible $C$-relation of a group, and then give the structure of such quasi-ordered groups. We also 
  reformulate in terms of quasi-orders some results concerning C-minimal groups given in \cite{Macstein}.\\
  
  \textit{Keywords}: Ordered group, valuation, C-minimality, quasi-order
  
  \textit{MSC}: 13A18, 20F60, 06F15, 03C60 
 \end{abstract}

 \section*{Introduction}
 
 The notion of C-relation  was first introduced by Adeleke and Neumann in \cite{Adeleke1}, where it was used to study 
 certain groups of automorphisms called
 Jordan groups. In \cite{Macstein}, Macpherson and Steinhorn introduced the notion of C-group and C-minimal structure and
 gave a partial description of C-minimal groups. The notion of C-relation defined in \cite{Macstein} is now called a dense C-relation 
 (see the definition in Section \ref{Cminsection} below). A more general notion of C-relation was introduced by
 Delon  in \cite{Delon} (see Delon's definition of a C-relation in Section  \ref{preliminaries} below).
 In Delon's context, o-minimality and strong minimality both become 
 special cases of C-minimality. 
 Until now, all the work concerning C-groups (see for example \cite{Macstein},\cite{Simonetta1} and 
 \cite{Simonetta2}) has focused on the study of C-minimal groups.
 The main motivation behind this paper is to understand the structure 
 of an arbitrary C-group, i.e without any assumption of minimality. We then apply our general theory to the 
 special case of dense C-minimal groups in the last section of the paper.
 
 We already know two examples of C-groups: those whose C-relation comes from an order and those whose C-relation comes 
   from a valuation. The goal of this paper is to show that these two fundamental examples are the 
   ``building blocks'' of the class of C-groups, in the sense that any compatible C-relation on a group can be constructed 
   from C-relations induced by valuations and C-relations induced by orders. 
   This is achieved not by working directly with a C-relation but with a quasi-order canonically associated to the
   C-relation, which we call a C-quasi-order (abbreviated as C-q.o).
  
  Except for Section \ref{compatiblesection}, which is not essential to understand the main results of this paper, 
  all results presented here are independent from our
  work   on compatible quasi-orders done in \cite{Lehericy}. However, 
  the main ideas behind the method used in the current paper are greatly inspired by what we did in 
  \cite{Lehericy}, which is why we would like to briefly recall the important results
  of \cite{Lehericy}.  
  We defined a compatible 
  quasi-ordered abelian group (q.o.a.g) as a pair $(G,\precsim)$ where $G$ is an abelian group and $\precsim$ 
  a compatible quasi-order, i.e a quasi-order satisfying the following axioms (see Section \ref{preliminaries} below for the definition of 
  ``$\sim$''):
  \begin{itemize}
    \item[$(Q_1)$] $\forall x\quad (x\sim0\Rightarrow x=0)$.
    \item[$(Q_2)$] $\forall x,y,z \quad(x\precsim y\nsim z\Rightarrow x+z\precsim y+z)$.
  \end{itemize}
 Fixing a compatible q.o.a.g $(G,\precsim)$, we distinguished two kinds of elements in $G$ respectively called 
 o-type and v-type elements. The v-type elements are characterized by the fact 
 that they are equivalent to their inverse, whereas o-type elements are not.
 We showed that the set $G^o$ of o-type elements of $G$ is a subgroup of $G$ and that $\precsim$ is actually an order on $G^o$, whereas 
 $\precsim$ is valuational on the set $G^v$ of v-type elements.  We also showed that any 
 compatible quasi-order naturally induces a compatible C-relation.  It was however quickly established that some compatible
 C-relations are not induced by a compatible quasi-order,
   so that the notion of compatible quasi-order was not appropriate to describe the entire class of compatible
   C-relations. This is what lead us to develop the notion of C-quasi-order introduced in this paper.

  \comment{It was already showed in \cite{Lehericy} that quasi-orders are connected to C-relations: more precisely, 
   it was showed there that any compatible quasi-order (as defined in \cite{Lehericy}) naturally induces a C-relation.
   It was however quickly established that some compatible C-relations are not induced by a compatible quasi-order,
   so that the notion of compatible quasi-order was not appropriate to describe the entire class of compatible
   C-relations. This lead us to consider another kind of quasi-order, called C-quasi-orders, which we define in 
   this paper and then use to describe the structure of an arbitrary C-group.}
   
   C-quasi-orders are quasi-orders canonically induced by a compatible C-relation on a group. 
   Since there is a bijective 
   correspondence between compatible C-relations and C-quasi-orders, we can study the class of C-groups by studying  the class 
   of C-quasi-orders, and this is what we do in this paper.  Taking a group $G$ with 
   a C-quasi-order $\precsim$, we show that $\precsim$ is basically a mix 
    of valuational quasi-orders with C-quasi-orders induced by group orderings.  The main 
    idea is to distinguish two kinds of elements, respectively called o-type and v-type (analogously to what was done in 
 \cite{Lehericy}) and to associate to each 
 $g\in G$ a subset $T_g$ of $G$ called the type-component of $G$. This set $T_g$ is characterized by two properties 
 : $T_g$ is strictly convex, and if $g$ is v-type (respectively o-type), then the C-quasi-order $\precsim$ is
 valuational-like (respectively, order-type-like) on $T_g$ (see Remark \ref{lookslikeexplain} for the definitions of ``valuation-like'' and ``order-type-like''). Moreover, $T_g$ is maximal with these 
 properties. We can then show that the family of all type-components form a partition of $G$. 
 
 We also draw attention to a counter-intuitive phenomenon, which we call welding,
 which occurs in certain C-quasi-ordered groups. Welding happens when the group contains an o-type element which is equivalent
 to a v-type element. This is counter-intuitive, since one would expect the quasi-order to separate elements of different types.
 If there is no welding in the group, then the $T_g$'s are actually convex. However, if there 
 is welding at a point $g$, then the maximum of $T_g$ is equivalent to the minimum of a $T_h$, which means 
 that the type-components are only strictly convex. This also means that a C-q.o cannot in general be obtained by lifting 
 C-q.o's of elementary type (i.e C-q.o's induced by a valuation or an ordering). However, we will show (see Theorem \ref{structuretheorem}) 
 that any C-q.o can be obtained by first lifting  C-q.o's of elementary type
  and then 
 ``welding''  (see Proposition \ref{weldingconstruction}), i.e coarsening the quasi-order in a certain way.
   
   The first section gives preliminaries on C-relations and quasi-orders. In Section \ref{Cqosection} we introduce 
   C-quasi-orders.
   We then give an axiomatization of the class of C-quasi-orders and 
   describe the structure of a C-quasi-order induced by a group ordering.
    Section \ref{structuresection} is dedicated to the study of an arbitrary C-quasi-ordered group $(G,\precsim)$.
     We start by giving five examples of C-q.o's. In Section \ref{relationsection} we give some results 
    describing the relation between a C-q.o and the group operation, which will be essential in proving 
    the main results of Section \ref{typecomponents}. Section \ref{quotientsection} shows that $\precsim$ induces a C-quasi-order on any quotient 
    $G/H$ where $H$ is a strictly convex normal subgroup. In Section \ref{typecomponents} we define the type-component $T_g$ of an element $g$ and 
    describe its properties.  We also associate to $g$ two subgroups $G^g$ and $G_g$ of $G$ and show that 
    the C-q.o induced by $\precsim$ on the quotient $G^g/G_g$ comes from a valuation (respectively, from
    an ordering) if $g$ is v-type (respectively, if $g$ is o-type). We start Section \ref{maintheoremsection} by giving two 
    ways of constructing C-q.o's: lifting and welding. We then give our main result, Theorem \ref{structuretheorem},
    which states that any C-q.o can be obtained from C-q.o's of elementary type by lifting and welding.
     Finally, in Section \ref{Cminsection}, we reinterpret the results on dense C-minimal groups given in \cite{Macstein} in view 
     of our main theorem \ref{structuretheorem}. More precisely, we show that the assumption of C-minimality 
     imposes conditions on the type-components.
 
 \section{Preliminaries}\label{preliminaries}
     In this paper, $\N$ denotes the set of natural numbers $\{1,2,3,\dots\}$ without zero. The set 
     $\N\cup\{0\}$ is denoted by $\N_0$.
     An \textbf{ordered group} is a pair $(G,\leq)$ consisting of a group $G$ with a total order $\leq$ satisfying:
     \begin{equation}\label{axiomoagr}\tag{OG}
       \forall x,y,z\in G, x\leq y\Rightarrow xz\leq yz\wedge zx\leq zy.
      \end{equation}
      For any group $G$ and $g,z\in G$, $g^z$ denotes $zg\iz$.
      A \textbf{valuation} on a group $G$ is a map $v:G\to \Gamma\cup\{\infty\}$ such that:
	    \begin{enumerate}[(i)]
	    \item $\Gamma$ is a totally ordered set, and this order is extended to $\Gamma\cup\{\infty\}$ by declaring
	    $\gamma<\infty$ for all $\gamma\in\Gamma$.
	    \item For any $g\in G$, $v(g)=\infty\Leftrightarrow g=1$.
	    \item For any $g,h\in G$, $v(g\ih)\geq \min(v(g),v(h))$.
	    \item For any $g,h,z\in G$, $v(g)\leq v(h)\Leftrightarrow v(g^z)\leq v(h^z)$.
	    \end{enumerate}

  If $v:G\to\Gamma\cup\{\infty\}$ is a valuation, then, for any $\gamma\in \Gamma$, $G^{\gamma}$ and 
 $G_{\gamma}$ respectively denote $\{g\in G\mid v(g)\geq\gamma\}$ and $\{g\in G\mid v(g)>\gamma\}$.
  \begin{Rem}\label{propval}
 Note that due to the fact that $(g^z)^{z^{-1}}=g$, we can replace ``$\Leftrightarrow$'' by ``$\Rightarrow$''  in  (iv). 
Also, assuming that (ii) holds, one easily sees that (iii) holds if and only if for any $g,h\in G$, $v(g)=v(g^{-1})\wedge v(gh)\geq \min(v(g),v(h))$ holds.
Moreover, we can easily show that following facts are true for any valued group $(G,v)$:   
  \begin{enumerate}[(a)] 
  \item For any $g,h\in G$, $v(g)<v(h)\Rightarrow v(g^z)<v(h^z)$ and $v(g)=v(h)\Rightarrow v(g^z)=v(h^z)$ (it follows from (iv)).  
  \item  If $v(g)<v(h)$, then $v(gh)=v(g)=v(hg)$.
   \item For any
 $\gamma\in\Gamma$, $G_{\gamma}$ is a normal subgroup of $G^{\gamma}$. Note however that  it can happen that
 $v(g)\neq v(g^z)$, and in particular $G^{\gamma}$ and $G_{\gamma}$ are not always normal in $G$. This is showed by Example \ref{exempletypeval}.
 \item Thanks to 
 axiom (iv) of valuations, conjugation by an element $z\in G$ induces an automorphism of $\Gamma$ defined by 
 $v(g)\mapsto v(g^z)$ (note that this map is onto since $v(g^{\iz})$ is a pre-image of $v(g)$). If $\gamma=v(g)$, then we denote $v(g^z)$ by $\gamma^z$. Conjugation by $z$ also induces a 
 group homomorphism $G^{\gamma}\to G^{\gamma^z}$ and another one from $G^{\gamma}/G_{\gamma}$ to $G^{\gamma^z}/G_{\gamma^z}$.
 
  \end{enumerate}
 \end{Rem}

  A \textbf{C-relation} on a set $M$ (see \cite{Delon}) is a ternary relation $C$ satisfying the universal closure of 
  the following axioms:
	  \begin{itemize}
	\item[$(C_1)$] $C(x,y,z)\Rightarrow C(x,z,y)$.
	\item[$(C_2)$] $C(x,y,z)\Rightarrow\neg C(y,x,z)$.
	\item[$(C_3)$] $C(x,y,z)\Rightarrow C(w,y,z)\vee C(x,w,z)$.
	\item[$(C_4)$] $x\neq y\Rightarrow C(x,y,y)$.
	\end{itemize}

	 Note that $(C_2)$ implies $\neg C(x,x,x)$ for all $x$. 
If $G$ is a group and $C$ a C-relation on $G$, then we say that $C$ is \textbf{compatible} (with the group operation) if 
  $C(x,y,z)$ implies $C(vxu,vyu,vzu)$ for any $x,y,z,u,v\in G$. A \textbf{C-group} is a 
  pair $(G,C)$ consisting of a group $G$ with a compatible C-relation $C$.
 \begin{Ex}\label{exempleCgroupe}
  There are two fundamental examples of C-groups:
  \begin{enumerate}[(a)]
         \item If $(G,\leq)$ is a totally ordered group, then 
       $\leq$ induces a compatible C-relation defined by 
       $C(x,y,z)\Leftrightarrow (y<x\wedge z<x)\vee(y=z\neq x)$. Such a C-relation is called an \textbf{order-type} C-relation.
       \item If $(G,v)$ is a valued group, then $v$ induces a compatible C-relation
       by \newline$C(x,y,z)\Leftrightarrow v(y\iz)>v(x\iz)$. Such a C-relation is called a \textbf{valuational} C-relation.
             \end{enumerate}           
   If $(G,C)$ is a C-group, then we say that $C$ is a C-relation \textbf{ of elementary type} if it is either order-type or valuational.   
          \end{Ex}

We say that a structure $\mathcal{M}=(M,C,\dots)$ endowed with a C-relation is \textbf{C-minimal} if for every 
  $\mathcal{N}=(N,C,\dots)$ such that $\mathcal{N}\equiv\mathcal{M}$ every definable subset of $N$ is
  quantifier-free definable in the language $\{C\}$.
  If $\T$ is a meet-semilattice tree and $M$ a set of maximal branches of $\T$, then 
we can define a C-relation on $M$ as follows:	$C(x,y,z)$ holds if and only if the branching point of 
$x$ and $z$ lies strictly 
below the branching point of $y$ and $z$. Conversely, if $(M,C)$ is an arbitrary C-structure, then	
   we can canonically associate a meet-semilattice tree $\T$, called the canonical tree of $(M,C)$, so that 
  $(M,C)$ is isomorphic to a set of maximal branches of $\T$ endowed with the C-relation given above.
  To study C-minimal structures it might be practical to consider their canonical tree.
 In \cite{Macstein}, the authors described dense C-minimal groups by looking at the action induced by the group on its 
 canonical tree. We will do the same in Section \ref{Cminsection}.

  A \textbf{quasi-order} (q.o) is a binary relation which is reflexive and transitive. If 
     $\precsim$ is a quasi-order on a set $A$, then
 it induces an equivalence relation on $A$ by $a\sim b$ if and only if $a\precsim b\precsim a$. 

 \begin{Notation}
  The symbol $\precsim$ will always denote a quasi-order, whereas $\leq$ will always denote an order.
 The symbol $\sim$ will always denote the equivalence relation induced by the  quasi-order $\precsim$ and
  $cl(a)$ will denote the class of $a$ for this equivalence relation.
 The notation $a\precnsim b$ means $a\precsim b\wedge a\nsim b$. If $S,T$ are two subsets of a quasi-ordered set $(A,\precsim)$, then
  the notation $S\precsim T$ (respectively $S\precnsim T$) means that $s\precsim t$ (respectively 
  $s\precnsim t$) for any $(s,t)\in S\times T$. If $a\in A$, then we write $S\precsim a$ instead of $S\precsim\{a\}$.
 \end{Notation}
 The q.o $\precsim$ 
 induces an order on the quotient $A/\sim$ by $cl(a)\leq cl(b)$ if and only if $a\precsim b$.
 We say that a q.o $\precsim$ is \textbf{total} 
 if for every $a,b\in A$, either $a\precsim b$ or $b\precsim a$ holds. 
 Note that $\precsim$ is total if and only if it induces a total order on
 $A/\sim$. \textit{Unless explicitly stated otherwise, every q.o considered in this paper is total}.
 
 A \textbf{coarsening} of $\precsim$ is a q.o $\precsim^{\ast}$ such that
      $a\precsim b\Rightarrow a\precsim^{\ast}b$ for any $a,b\in A$. In that case, we also say that $\precsim$ is a
      \textbf{refinement} of $\precsim^{\ast}$.
      The \textbf{trivial q.o} on $A$ is the q.o which only has one equivalence class, i.e $a\precsim b$ for every 
      $a,b\in A$. We usually denote it by $\precsim_t$.
 If $a,c,b\in A$, then we say that $c$ is \textbf{between a and b} if $a\precsim c\precsim b$ or $b\precsim c\precsim a$ holds. If the stronger condition 
 $a\precnsim c\precnsim b\vee b\precnsim c\precnsim a$ holds, then we then say that $c$ is \textbf{strictly between} $a$ and $b$.
 If $S$ is a subset of $A$, then we define the \textbf{maximum} (respectively \textbf{minimum}) of $S$ as the set of all 
 elements $s$ of $S$ such that $t\precsim s$ (respectively $s\precsim t$) for every $t\in S$. We denote it by 
 $\max(S)$ (respectively $\min(S)$). Note that the 
 maximum of $S$ is always defined but can be empty.
We say that $S$ is:
  \begin{itemize}
   \item  An \textbf{initial segment} if  $s\in S$ and 
  $a\precsim s$ implies $a\in S$.
   \item \textbf{Convex} if $s,t\in S$ and $s\precsim a\precsim t$ implies $a\in S$.
   \item \textbf{Strictly convex} if $s,t\in S$ and $s\precnsim a\precnsim t$ implies $a\in S$.
   \item \textbf{Left-convex} (respectively, \textbf{right-convex}) if 
   $s,t\in S$ and $s\precsim a\precnsim t$  (respectively $s\precnsim a\precsim t$) implies $a\in S$.
  \end{itemize}
  If $S$ is strictly convex, then we define the \textbf{convexity complement} of $S$ as the smallest subset $T$ 
  of $A\backslash S$ such 
  that $S\cup T$ is convex. Note that being left-convex or right-convex implies being strictly convex.
 We can characterize strict convexity by the following lemma:
 \begin{Lem}\label{strictconvexity}
  For any $S\subseteq A$, $S$ is strictly convex if and only if one of the following conditions holds:
  \begin{enumerate}[(i)]
   \item $S$ is convex. In that case the convexity complement of $S$ is $\varnothing$.
   \item $\min(S)\neq\varnothing$  and $S\cup cl(m)$ is convex for any $m\in\min(S)$.
   In that case $S$ is right-convex and its convexity complement is
   $cl(m)\backslash S$.
   \item $\max(S)\neq\varnothing$  and $S\cup cl(M)$ is convex for any $M\in\max(S)$. In that case $S$ is left-convex and its convexity complement is
   $cl(M)\backslash S$.
   \item $\min(S),\max(S)$ are both non-empty and $S\cup cl(m)\cup cl(M)$ is convex for any 
   $m\in\min(S)$ and $M\in\max(S)$. In that case 
   the convexity complement of $S$ is
   $(cl(m)\cup cl(M))\backslash S$.
  \end{enumerate}

 \end{Lem}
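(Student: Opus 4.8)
The plan is to prove both implications by reducing everything to a single observation about equivalence classes: since $\precsim$ is total and $a\sim a'$ forces $a$ and $a'$ to stand in exactly the same strict relations to every other element, a point lying \emph{strictly} between two elements of $S$ cannot be distinguished from any of its $\sim$-equivalents. Concretely, I would first record the auxiliary fact that if $S$ is strictly convex and $s\precnsim a\precnsim t$ with $s,t\in S$, then $cl(a)\subseteq S$; this is immediate, since for $a'\sim a$ one still has $s\precnsim a'\precnsim t$. Hence the only classes that $S$ can meet without containing entirely are those of $\min(S)$ and $\max(S)$ --- each a single $\sim$-class, as any two elements of $\min(S)$ (resp. $\max(S)$) are equivalent --- and this is precisely the source of the four cases.

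For the easy implication (each of (i)--(iv) implies strict convexity, with the stated complement), I would argue as follows. Case (i) is trivial, since $s\precnsim a\precnsim t$ gives $s\precsim a\precsim t$. For (ii), from convexity of $S\cup cl(m)$ I would deduce that $S$ is right-convex: if $s\precnsim a\precsim t$ with $s,t\in S$, convexity puts $a$ in $S\cup cl(m)$, and $a\in cl(m)\setminus S$ is impossible because $a\sim m\precsim s$ would give $a\precsim s$, contradicting $s\precnsim a$; strict convexity then follows from the remark preceding the lemma. Case (iii) is symmetric (left-convexity), and for (iv) I would run the boundary argument twice: convexity of $S\cup cl(m)\cup cl(M)$ places a strictly-between point $a$ in that union, and membership in $cl(m)$ (resp. $cl(M)$) is excluded by the derived $a\precsim s$ (resp. $t\precsim a$).

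For the converse I would split into four exhaustive scenarios according to whether $\min(S)$ and $\max(S)$ are empty, using totality to convert each failure of $s\precsim u$ into a strict inequality $u\precnsim s$. If both are empty, then for any $s\precsim a\precsim t$ in the convexity test I can find $u,w\in S$ with $u\precnsim s$ and $t\precnsim w$, whence $u\precnsim a\precnsim w$ and strict convexity yields $a\in S$, so $S$ is convex (case (i)). If only $\min(S)$ is nonempty I would show $S\cup cl(m)$ is convex by the same device: a point $a$ between two elements of $S\cup cl(m)$ that is not in $cl(m)$ satisfies $m\precnsim a$, forces its upper witness to lie in $S$, and is caught by strict convexity after replacing a possibly $\sim$-maximal witness by a strictly larger element of $S$ (available since $\max(S)=\varnothing$); this is case (ii), and symmetrically case (iii). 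When both are nonempty, a point $a\notin cl(m)\cup cl(M)$ satisfies $m\precnsim a\precnsim M$ with $m,M\in S$, giving case (iv) at once.

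Finally I would compute the convexity complements, which is the only place requiring a genuine minimality argument. The containment of $cl(m)\setminus S$ (resp. $cl(M)\setminus S$) in \emph{every} admissible $T$ follows from the observation that $m\in S\cup T$ together with $a\sim m$ forces $a\in S\cup T$ by convexity, so any such $a\notin S$ must lie in $T$; combined with the convexity of the filled sets already established, this pins the complement down exactly (and $\varnothing$ in case (i)). The main obstacle is not depth but bookkeeping at the boundary: one must consistently use totality to pass between $\precsim$ and $\precnsim$, and keep track of the two ways an element can fail to be strictly between two points of $S$ (by being $\sim$-minimal or $\sim$-maximal), since these are exactly the degrees of freedom separating strict convexity from convexity.
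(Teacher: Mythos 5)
Your proof is correct. The underlying mechanism is the same as the paper's --- totality turns every failure of $\precsim$ into a strict inequality, strict convexity swallows the whole equivalence class of any interior point, and hence the only classes $S$ can meet without containing are those of $\min(S)$ and $\max(S)$ --- but you organize the converse differently. The paper starts from a witness to non-convexity ($m\precsim a\precsim t$ with $a\notin S$), argues that the offending element must be equivalent to a minimum or maximum of $S$, and then branches on whether $S\cup cl(m)$ is already convex, landing in case (ii)/(iii) or escalating to (iv). You instead split exhaustively on the emptiness of $\min(S)$ and $\max(S)$ and verify directly, in each of the four scenarios, the strongest convexity statement available (filling in $cl(m)$, $cl(M)$, both, or neither), using the absence of a maximum (resp.\ minimum) to replace a boundary witness by a strictly larger (resp.\ smaller) element of $S$. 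Your decomposition reads off which case applies from the min/max data alone, and it also makes explicit two things the paper dismisses as easy or clear: the forward implication (that each of (i)--(iv) forces strict convexity) and the identification of the convexity complements, where your observation that $a\sim m$ and $m\in S\cup T$ force $a\in S\cup T$ for every admissible $T$ is exactly what is needed to show the smallest such $T$ exists and equals the stated set.
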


 \begin{proof}
  It is easy to check that if one of these conditions holds, then $S$ is strictly convex. Let us prove the converse. Assume
  that $S$ is not convex. This means that there exists $m,t\in S$ and $a\notin S$ such that 
  $m\precsim a\precsim t$. However, since $S$ is strictly convex, we cannot have $m\precnsim a\precnsim t$. Without loss of 
  generality, we can thus assume that $m\sim a$. Assume that $m\notin \min(S)$ and $m\notin\max(S)$. Then there are 
  $s,M\in S$ with $s\precnsim a\sim m\precnsim M$. Since $S$ is strictly convex, it follows that $a\in S$, which is a contradiction.
  Thus, we either have $m\in\min(S)$ or $m\in\max(S)$. 
  If $S\cup cl(m)$ is convex, then we are in case (ii) or (iii). Assume then that it is not convex. 
  Without loss of generality, we may assume $m\in\min(S)$.
  Take $b\notin S\cup cl(m)$ and 
  $M\in S\cup cl(m)$ with $m\precnsim b\precsim M$. Since $M\notin cl(m)$, we have $M\in S$.  
  By strict convexity of $S$, we must have $b\sim M$. If $M\notin\max(S)$, then we would have $m\precnsim b\precnsim M'$ for a
  certain $M'\in S$, which would imply $b\in S$. Therefore, we must have $M\in\max(S)$. Now let us proves that $S\cup cl(m)\cup cl(M)$
  is convex, so that we are in case (iv). Let $c\in A$ such that there is $s,t\in S\cup cl(m)\cup cl(M)$ with 
  $s\precsim c\precsim t$. Since $m,M$ are respectively minimal and maximal in $S$, we have 
  $m\precsim c\precsim M$. If $c\notin cl(m)\cup cl(M)$, then we even have $m\precnsim c\precnsim M$. By strict convexity
  of $S$, this implies $c\in S$.
  The statements about the convexity complement are clear.
  \end{proof}

     In this paper, a \textbf{quasi-ordered group} is just a group endowed with a quasi-order without any further
 assumption. An element $g$ of a quasi-ordered group $(G,\precsim)$ is called \textbf{v-type} if $g\sim\ig$ and \textbf{o-type} if $g=1\vee g\nsim\ig$. Moreover, $g$ is 
called \textbf{o$^+$-type} if $\ig\precnsim g$ and \textbf{o$^-$-type} if $g\precnsim\ig$.  Note that $1$ is the only element which is both v-type and o-type.
 If $(G,v)$ is a valued group, then $v$ induces a quasi-order on $G$ via 
 $g\precsim h\Leftrightarrow v(g)\geq v(h)$.
 If $(G,\precsim_G)$ and $(H,\precsim_H)$ are two quasi-ordered groups and $\phi: G\to H$ a group homomorphism, then
 we say that $\phi$ is \textbf{quasi-order-preserving} if for any $g,h\in G$, 
 $g\precsim h$ if and only if $\phi(g)\precsim\phi(h)$.
 It will be convenient to consider quotients, which is why we need the following lemma from \cite{Lehericy}:
 
      \begin{Lem}\label{quotient}
 Let $(G,\precsim)$ be a quasi-ordered group
 and $H$ a normal subgroup of  $G$ such that the following condition is satisfied:
  \[\forall g_1,g_2\in G((g_1\ig_2\notin H\wedge g_1\precsim g_2)\Rightarrow (\forall h_1,h_2\in H, g_1h_1\precsim g_2h_2\wedge h_1g_1\precsim h_2g_2)).\]
 Then $\precsim$ induces a  q.o on the quotient group $G/H$ defined by:
 \[gH\precsim hH\Leftrightarrow g\ih\in H\vee (g\ih\notin H\wedge g\precsim h).\]
\end{Lem}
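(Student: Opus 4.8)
The plan is to check the three things a quasi-order requires, namely that the proposed relation on $G/H$ is well-defined on cosets, reflexive, and transitive. Throughout I would use that $H$ is normal, so that each coset is unambiguous and every element of $gH$ is of the form $gh$ with $h\in H$. I expect well-definedness to carry essentially all the weight of the argument, the hypothesis being tailored precisely for it; reflexivity and transitivity should then fall out quickly.

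For well-definedness, I would fix $g,h\in G$ together with alternative representatives $g'=ga$ and $h'=hb$, where $a,b\in H$, and show that the two disjunctive conditions agree for $(g,h)$ and for $(g',h')$. First I would note that the clause ``$g\ih\in H$'' does not depend on the representatives, since $g\ih\in H\Leftrightarrow gH=hH\Leftrightarrow g'H=h'H\Leftrightarrow g'(h')^{-1}\in H$; so if $g\ih\in H$ both definitions return ``true'' through the first disjunct. The real case is $g\ih\notin H$ (equivalently $g'(h')^{-1}\notin H$), where I must prove $g\precsim h\Leftrightarrow g'\precsim h'$. For the forward direction I would apply the hypothesis to the pair $(g_1,g_2)=(g,h)$: from $g\ih\notin H$ and $g\precsim h$ it yields $ga\precsim hb$, i.e. $g'\precsim h'$. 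For the converse I would apply the hypothesis to $(g_1,g_2)=(g',h')$ with the elements $a^{-1},b^{-1}\in H$: from $g'(h')^{-1}\notin H$ and $g'\precsim h'$ it yields $g'a^{-1}\precsim h'b^{-1}$, which simplifies to $g\precsim h$. This back-and-forth, in which the hypothesis is invoked once for each pair of representatives, is the step I expect to be the crux; everything else is bookkeeping. Only the right-multiplication half of the hypothesis is needed here, the left-multiplication half being available symmetrically thanks to normality.

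Reflexivity is immediate, since $g\ig=1\in H$ puts $gH\precsim gH$ into the first disjunct. For transitivity I would assume $gH\precsim hH$ and $hH\precsim kH$ and split into cases. If $gH=hH$ or $hH=kH$, then by well-definedness one of the two hypotheses already states $gH\precsim kH$. Otherwise $g\ih\notin H$ and $hk^{-1}\notin H$, so the definition forces $g\precsim h$ and $h\precsim k$, and transitivity of $\precsim$ on $G$ gives $g\precsim k$. Then either $gk^{-1}\in H$, so $gH\precsim kH$ holds via the first disjunct, or $gk^{-1}\notin H$, so $g\precsim k$ delivers $gH\precsim kH$ via the second disjunct. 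In every case $gH\precsim kH$, which finishes the verification that $\precsim$ descends to a quasi-order on $G/H$.
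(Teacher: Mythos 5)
Your proof is correct, and it is exactly the routine verification the paper intends: the paper itself does not spell out an argument here, merely remarking that the abelian-case proof from \cite{Lehericy} carries over verbatim, and your check (representative-independence of both disjuncts via the displayed hypothesis applied once for each choice of representatives, then reflexivity and transitivity by cases) is that standard argument. The only point you leave implicit is totality of the induced relation (the paper's convention is that all quasi-orders are total), but that follows immediately from totality of $\precsim$ on $G$ together with $g\ih\in H\Leftrightarrow h\ig\in H$, so nothing essential is missing.
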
 
 
 Lemma \ref{quotient} was only proved for abelian groups in \cite{Lehericy}, but we can easily see that the proof is exactly 
 the same in the general case.
 The opposite process of quotienting a q.o is lifting, which we will also need.
 Let $G$ be an abelian group and $v:G\to \Gamma\cup\{\infty\}$ a valuation.
 Assume that for each $\gamma\in\Gamma$, the quotient $G^{\gamma}/G_{\gamma}$ is endowed with a q.o $\precsim_{\gamma}$.
 We define the \textbf{lifting} of $(\precsim_{\gamma})_{\gamma\in\Gamma}$ to $G$ as the quasi-order defined on 
 $G$ by the following formula:
 \[g\precsim h\Leftrightarrow v(g)>v(h)\vee (v(g)=v(h)=\gamma\wedge gG_{\gamma}\precsim_{\gamma} hG_{\gamma}).\]

  Let us check that $\precsim$ is indeed a q.o. Reflexivity is clear. Assume  $f\precsim g\precsim h$. If $v(f)>v(g)$ or $v(g)>v(h)$, then clearly $v(f)>v(h)$, hence $f\precsim h$. 
 Thus, we can assume $v(f)=v(h)=v(g)=\gamma$. But then $f\precsim h$ follows from the transitivity of $\precsim_{\gamma}$. Assume now that 
$g\precsim h$ does not hold. In particular, we must have $v(g)\leq v(h)$. If $v(g)<v(h)$, then $h\precsim g$. If 
$v(h)=v(g)=\gamma$, then we cannot have $gG_{\gamma}\precsim_{\gamma} hG_{\gamma}$, but since $\precsim_{\gamma}$ is total it follows that $hG_{\gamma}\precsim_{\gamma} gG_{\gamma}$, 
hence $h\precsim g$. This shows that $\precsim$ is total.

 \section{C-quasi-orders}\label{Cqosection}
 
 \subsection{Definition and axiomatization}
  
  As mentioned in the introduction, we want to associate a quasi-order to every compatible C-relation. This idea originates from the following
  general fact:
  
  \begin{Lem}
   Let $A$ be a set (not necessarily a group), $C$ a C-relation on $A$ and take $z\in A$.
   Then $z$ induces a  quasi-order on $A$ by
   $a\precsim b\Leftrightarrow\neg C(a,b,z)$.
  \end{Lem}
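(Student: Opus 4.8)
The plan is to verify that the relation $\precsim$ defined by $a\precsim b\Leftrightarrow\neg C(a,b,z)$ is reflexive and transitive, using only the C-relation axioms $(C_1)$--$(C_4)$. I expect reflexivity to follow immediately and transitivity to be the main obstacle, requiring a careful application of $(C_3)$ together with the asymmetry axiom $(C_2)$.

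First I would check reflexivity. We need $a\precsim a$, i.e.\ $\neg C(a,a,z)$ for every $a\in A$. If $a=z$, then by $(C_2)$ we have $\neg C(a,a,a)$ (as noted in the excerpt, $(C_2)$ gives $\neg C(x,x,x)$), so $\neg C(a,a,z)$ holds. If $a\neq z$, then I would argue that $C(a,a,z)$ cannot hold: applying $(C_2)$ to $C(a,a,z)$ would yield $\neg C(a,a,z)$, an immediate contradiction, since setting $x=y=a$ in $(C_2)$ gives $C(a,a,z)\Rightarrow\neg C(a,a,z)$. Hence $\neg C(a,a,z)$ in all cases and $\precsim$ is reflexive.

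The heart of the argument is transitivity. Suppose $a\precsim b$ and $b\precsim c$, i.e.\ $\neg C(a,b,z)$ and $\neg C(b,c,z)$; I must deduce $\neg C(a,c,z)$. I would argue by contradiction: assume $C(a,c,z)$ holds. The natural tool is $(C_3)$, which with an appropriate choice of the ``witness'' variable $w$ lets me split a C-relation into two alternatives. Applying $(C_3)$ to $C(a,c,z)$ with $w=b$ gives $C(b,c,z)\vee C(a,b,z)$. But both disjuncts are excluded by our hypotheses $\neg C(b,c,z)$ and $\neg C(a,b,z)$, yielding the desired contradiction. Thus $\neg C(a,c,z)$, and $\precsim$ is transitive.

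I would then remark that this establishes $\precsim$ is a quasi-order, so no further work is needed; the statement does not claim totality, which is good because the induced q.o.\ need not be total for an arbitrary C-relation (totality of the canonically associated C-quasi-order will come from the stronger structural setup later in the paper). The only subtlety to double-check is the exact variable placement when instantiating $(C_3)$, since the axiom as stated is $C(x,y,z)\Rightarrow C(w,y,z)\vee C(x,w,z)$; matching $(x,y,z)\mapsto(a,c,z)$ and $w\mapsto b$ indeed produces precisely $C(b,c,z)\vee C(a,b,z)$, so the instantiation is clean and requires no use of $(C_1)$ or $(C_4)$.
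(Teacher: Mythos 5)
Your proof of the stated properties is correct. For transitivity you do exactly what the paper does: the implication $\neg C(a,b,z)\wedge\neg C(b,c,z)\Rightarrow\neg C(a,c,z)$ is just the contrapositive of $(C_3)$ instantiated at $w=b$, and your variable matching is right. For reflexivity you take a genuinely different and in fact cleaner route: setting $x=y=a$ in $(C_2)$ gives $C(a,a,z)\Rightarrow\neg C(a,a,z)$, hence $\neg C(a,a,z)$ unconditionally, with no case split needed. The paper instead handles $a\neq z$ by chaining $(C_4)$ (to get $C(z,a,a)$), then $(C_2)$, then $(C_1)$; your argument avoids $(C_1)$ and $(C_4)$ entirely and subsumes the $a=z$ case. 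Both are valid; yours is shorter.

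However, your closing remark is wrong: you claim the induced quasi-order ``need not be total for an arbitrary C-relation,'' but totality is immediate from $(C_2)$. Indeed, failure of totality at $a,b$ would mean $C(a,b,z)\wedge C(b,a,z)$, which $(C_2)$ forbids. The paper's proof explicitly records ``totality is given by axiom $(C_2)$,'' and this matters because the paper's standing convention is that every quasi-order considered is total, so the lemma is implicitly asserting totality as well. You should delete that remark and add the one-line totality argument.
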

\begin{proof}
 Note that $\neg C(z,z,z)$ follows from $(C_2)$, so we have $z\precsim z$. Let $a\in A$ with $a\neq z$. 
 By $(C_4)$, we have $C(z,a,a)$. By $(C_2)$, this implies $\neg C(a,z,a)$, which by $(C_1)$ implies
 $\neg C(a,a,z)$. This proves that $\precsim$ is reflexive. Transitivity is the contra-position of axiom $(C_3)$. Totality is
 given by axiom $(C_2)$.
 
\end{proof}

In the context of groups, the natural candidate for the parameter $z$ is $z=1$, hence the following definition:

\begin{Def}\label{deffondamentale}
 Let $G$ be a group. For any compatible C-relation
  $C$  on $G$, we define  the \textbf{q.o induced by $C$} as the q.o given by the formula
 $x\precsim y\Leftrightarrow \neg C(x,y,1)$. 
 A \textbf{C-quasi-order} (C-q.o) on $G$ is the q.o induced by a 
 compatible C-relation on $G$. A \textbf{C-quasi-ordered group} (C-q.o.g) is a pair $(G,\precsim)$ consisting of a
 group $G$ endowed with a C-q.o $\precsim$.
\end{Def}
\begin{Rem}
 If $\precsim$ is the q.o induced by $C$, then we have 
 $C(x,y,1)\Leftrightarrow y\precnsim x$.
\end{Rem}

If $\precsim$ is a C-q.o induced by the C-relation $C$, then
we say that $\precsim$ is \textbf{order-type} (respectively \textbf{valuational}/ \textbf{of elementary type}) if $C$ is order-type 
 (respectively valuational/ of elementary type ). These definitions make sense thanks to the following proposition:
 
\begin{Prop}\label{corrbij}
 Let $\precsim$ be a C-q.o. Then there is only one compatible C-relation inducing it, namely the one given by the 
 formula $C(x,y,z)\Leftrightarrow y\iz\precnsim x\iz$.
\end{Prop}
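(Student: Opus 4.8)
The plan is to prove the two assertions separately: first that the displayed formula $C(x,y,z)\Leftrightarrow y\iz\precnsim x\iz$ does define a compatible C-relation inducing $\precsim$, and second that it is the \emph{only} one. For the first part, suppose $\precsim$ is the C-q.o induced by some compatible C-relation $C_0$ (such a $C_0$ exists by the definition of C-q.o). By the Remark immediately preceding this proposition, we have $C_0(a,b,1)\Leftrightarrow b\precnsim a$. The key computation is to express $C_0(x,y,z)$ in terms of the relation at the base point $1$: since $C_0$ is compatible, right-multiplication by $\iz$ gives $C_0(x,y,z)\Leftrightarrow C_0(x\iz,y\iz,z\iz)=C_0(x\iz,y\iz,1)$, and the latter is equivalent to $y\iz\precnsim x\iz$ by the Remark. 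This shows that \emph{any} compatible C-relation inducing $\precsim$ must agree with the displayed formula, which simultaneously establishes existence (the formula equals the given $C_0$, hence is a compatible C-relation inducing $\precsim$) and uniqueness.

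So the heart of the argument is the single line invoking compatibility to translate $C_0(x,y,z)$ back to the base point $1$. I would present it as: for all $x,y,z\in G$,
\[
C_0(x,y,z)\Leftrightarrow C_0(x\iz,y\iz,z\iz)\Leftrightarrow C_0(x\iz,y\iz,1)\Leftrightarrow y\iz\precnsim x\iz,
\]
where the first equivalence is compatibility applied with the multiplier $\iz$ on the right (and trivial multiplier on the left), $z\iz=1$, and the last step is the Remark. This formula determines $C_0$ completely from $\precsim$, giving uniqueness for free.

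I expect the only genuinely delicate point to be making sure the base-point translation is airtight. Compatibility as stated gives $C(x,y,z)\Rightarrow C(vxu,vyu,vzu)$, i.e.\ only an implication; to get the equivalence $C_0(x,y,z)\Leftrightarrow C_0(x\iz,y\iz,1)$ I should apply the implication in both directions, once with the pair $(v,u)=(1,\iz)$ and once with $(v,u)=(1,z)$ to undo it, using $(x\iz)z=x$. This is routine but worth spelling out so the ``$\Leftrightarrow$'' is justified rather than asserted. Once that equivalence is in hand, nothing further is needed: the formula both defines a compatible C-relation (it coincides with the C-relation $C_0$ we started from, which is compatible) and is forced, so there is exactly one such C-relation.
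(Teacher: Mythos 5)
Your proposal is correct and follows essentially the same route as the paper: take any compatible $C$ inducing $\precsim$ and use compatibility to write $C(x,y,z)\Leftrightarrow C(x\iz,y\iz,1)\Leftrightarrow y\iz\precnsim x\iz$, which forces uniqueness. Your extra care in justifying the ``$\Leftrightarrow$'' by applying the compatibility implication in both directions is a reasonable elaboration of a step the paper leaves implicit.
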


 \begin{proof}
  Let $C$ be a compatible C-relation inducing $\precsim$. $C$ is compatible so we have 
  $C(x,y,z)\Leftrightarrow C(x\iz,y\iz,1)\Leftrightarrow y\iz\precnsim x\iz$.
 \end{proof}
 
 We now want to axiomatize the class of C-q.o's.  
Proposition \ref{corrbij} states that
  $\precsim$ is a C-q.o if and only if the formula
 $y\iz\precnsim x\iz$ defines a compatible C-relation. We thus want to answer the question:  
  When does this formula define a compatible C-relation?
  
  \begin{Lem}\label{C2C3}
    Let $\precsim$ be a quasi-order on a group $G$ and define a ternary relation
    $C(x,y,z)$ by the formula $y\iz\precnsim x\iz$. Then the relation $C$ satisfies $(C_2)$ and $(C_3)$.
  \end{Lem}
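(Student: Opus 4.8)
The plan is to reduce both axioms to purely order-theoretic statements about the total quasi-order $\precsim$, exploiting the fact that for a \emph{fixed} parameter $z$ the three arguments of $C$ enter only through right-translation by $\iz$. Concretely, the map $t\mapsto t\iz$ is a bijection of $G$, and $C(x,y,z)\Leftrightarrow y\iz\precnsim x\iz$; since each of $(C_2)$ and $(C_3)$ involves a single fixed $z$, it suffices to prove two abstract facts, valid for any total quasi-order and any $a,b,c$, which I then apply with $a=x\iz$, $b=y\iz$, $c=w\iz$: first, $b\precnsim a\Rightarrow\neg(a\precnsim b)$, and second, $b\precnsim a\Rightarrow(b\precnsim c\vee c\precnsim a)$. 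Note that neither fact uses the group structure beyond the translation bijection, nor any compatibility of $C$, so this part of the axiomatization is cheap.

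For $(C_2)$ I would simply invoke asymmetry of the strict part $\precnsim$: if $b\precnsim a$, that is $b\precsim a$ and $b\nsim a$, then $a\precnsim b$ would in particular give $a\precsim b$, whence $a\sim b$, contradicting $b\nsim a$. This is immediate from the definition of $\precnsim$ together with the transitivity of $\precsim$.

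The slightly more substantive step is $(C_3)$, which I would prove by contraposition using totality. The key observation is that, because $\precsim$ is total, $\neg(b\precnsim c)$ is equivalent to $c\precsim b$: indeed $c\precsim b$ clearly rules out $b\precnsim c$, and conversely if $b\precnsim c$ fails then either $b\precsim c$ fails, forcing $c\precsim b$ by totality, or $b\sim c$, which again yields $c\precsim b$. Applying this equivalence twice, the negation of the desired conclusion $b\precnsim c\vee c\precnsim a$ reads $c\precsim b$ together with $a\precsim c$. Transitivity then gives $a\precsim b$, and combining this with the hypothesis $b\precsim a$ (part of $b\precnsim a$) yields $a\sim b$, contradicting $b\nsim a$. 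Hence the conclusion holds, which is exactly $(C_3)$ after translating back.

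The main, and essentially only, obstacle is the bookkeeping around the strict relation $\precnsim$ under totality, specifically the equivalence $\neg(b\precnsim c)\Leftrightarrow c\precsim b$, which is where totality of the quasi-order is genuinely used; without it $(C_3)$ can fail. I would also flag explicitly that $(C_1)$ and $(C_4)$ are \emph{not} asserted by this lemma, precisely because they are the axioms that do not come for free and will later pin down which total quasi-orders actually arise as C-q.o's.
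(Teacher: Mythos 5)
Your proof is correct and follows essentially the same route as the paper: $(C_2)$ is the asymmetry of the strict part $\precnsim$, and $(C_3)$ is the contrapositive argument in which totality converts $\neg(y\iz\precnsim w\iz)$ into $w\iz\precsim y\iz$ and transitivity then yields $w\iz\precnsim x\iz$. The only cosmetic difference is that you negate both disjuncts of the conclusion and derive $x\iz\sim y\iz$ as a contradiction, whereas the paper negates only one and concludes the other directly; the content is identical.
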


  \begin{proof}
     $C$ clearly satisfies $(C_2)$.  
     Assume $C(x,y,z)$ and 
   $\neg C(w,y,z)$ hold. This means $y\iz\precnsim x\iz$ and
    $\neg (y\iz\precnsim w\iz)$. Since $\precsim$ is total, this 
    implies $w\iz\precsim y\iz\precnsim x\iz$, hence $w\iz\precnsim x\iz$ i.e $C(x,w,z)$. 
    This proves
    $(C_3)$.
  \end{proof}

  This gives us an axiomatization of C-q.o's:

\begin{Prop}[Axiomatization of C-q.o's]
 Let $G$ be a group and $\precsim$ a q.o on $G$. Then
 $\precsim$ is a C-q.o if and only if the following three axioms are satisfied:
 \begin{itemize}
   \item[$(CQ_1)$] $\forall x\in G\backslash\{1\}$, $1\precnsim x$.
   \item[$(CQ_2)$] $\forall x,y (x\precsim y\Leftrightarrow x\iy\precsim \iy)$.
   \item[$(CQ_3)$] $\forall x,y,z\in G$, $x\precsim y\Leftrightarrow x^z\precsim y^z$.
  \end{itemize}
\end{Prop}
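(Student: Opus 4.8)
The plan is to invoke Proposition~\ref{corrbij}: a total q.o $\precsim$ is a C-q.o exactly when the ternary relation $C(x,y,z)\Leftrightarrow y\iz\precnsim x\iz$ is a compatible C-relation. By Lemma~\ref{C2C3} this $C$ automatically satisfies $(C_2)$ and $(C_3)$ (totality of $\precsim$ is what that lemma uses), so it remains only to find, for each of the three surviving requirements on $C$ --- namely $(C_4)$, $(C_1)$, and compatibility --- an equivalent condition on $\precsim$, and to check that these conditions are precisely $(CQ_1)$, $(CQ_2)$, $(CQ_3)$. The whole argument is therefore a ``dictionary'' translating C-axioms into q.o-axioms, which I would present as three short equivalences.

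First, compatibility. I would note that $C$ is \emph{automatically} invariant under right multiplication: since $(yu)(zu)^{-1}=y\iz$ and $(xu)(zu)^{-1}=x\iz$, the defining formula gives $C(xu,yu,zu)\Leftrightarrow C(x,y,z)$ for all $u$. Hence compatibility collapses to left-invariance $C(x,y,z)\Rightarrow C(vx,vy,vz)$. Computing $(vy)(vz)^{-1}=(y\iz)^v$ and likewise for $x$, left-invariance reads $y\iz\precnsim x\iz\Rightarrow (y\iz)^v\precnsim(x\iz)^v$; taking $z=1$ and letting $x,y$ be arbitrary this is just $a\precnsim b\Rightarrow a^v\precnsim b^v$ for all $a,b,v$. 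Applying this to $v^{-1}$ yields the reverse implication (and, via totality, $a\sim b\Rightarrow a^v\sim b^v$ as well), so it is equivalent to the biconditional $(CQ_3)$.

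Next, $(C_4)$. Here $C(x,y,y)\Leftrightarrow y\iy\precnsim x\iy\Leftrightarrow 1\precnsim x\iy$, and as $x$ ranges over the elements different from $y$ the product $x\iy$ ranges over all of $G\setminus\{1\}$; so $(C_4)$ says exactly that $1\precnsim w$ for every $w\neq 1$, which is $(CQ_1)$.

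The main step, and the one I expect to require the most care, is matching $(C_1)$ with $(CQ_2)$. Since $(C_1)$ becomes its own converse after swapping $y$ and $z$, it is really the biconditional $C(x,y,z)\Leftrightarrow C(x,z,y)$; using the right-invariance just established to normalise the third coordinate to $1$, this rewrites as $y\iz\precnsim x\iz\Leftrightarrow z\iy\precnsim x\iy$. I would then substitute $p=x\iz$, $q=y\iz$, using $q^{-1}=z\iy$ and $pq^{-1}=x\iy$, to recognise this as the instance $q\precnsim p\Leftrightarrow q^{-1}\precnsim pq^{-1}$, which by totality of $\precsim$ is exactly the negated form of $(CQ_2)$, i.e. of $x\precsim y\Leftrightarrow x\iy\precsim\iy$; conversely, setting $z=1$ recovers $(CQ_2)$. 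The obstacle is purely bookkeeping: keeping the inverses and the conjugation straight, and legitimately passing between the $\precsim$-biconditional $(CQ_2)$ and the $\precnsim$-biconditional coming from $(C_1)$, which is valid precisely because $\precsim$ is total. Combining the three equivalences, $C$ is a compatible C-relation iff $(C_1)$, $(C_4)$ and left-invariance all hold, iff $(CQ_1)$, $(CQ_2)$, $(CQ_3)$ all hold, which is the claim.
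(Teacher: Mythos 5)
Your proof is correct and follows essentially the same route as the paper: both reduce via Proposition \ref{corrbij} and Lemma \ref{C2C3} to translating $(C_1)$, $(C_4)$ and compatibility into conditions on $\precsim$, and the underlying computations are identical. Your observation that right-invariance of $C$ is automatic from its defining formula, which lets you match each remaining C-axiom to exactly one $(CQ_i)$ as a clean biconditional, is a slightly tidier organization than the paper's two separate implications, but not a genuinely different argument.
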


Note that ``$\Leftrightarrow$'' can be replaced by ``$\Rightarrow$'' in $(CQ_2)$ and $(CQ_3)$ since $(x\iy)(\iy)^{-1}=x$, $(\iy)^{-1}=y$ and $(x^z)^{\iz}=x$.

\begin{proof}
 Define $C(x,y,z):=y\iz\precnsim x\iz$. By Proposition \ref{corrbij}, $\precsim$ is a C-q.o if and only if 
 $C$ is a compatible C-relation.
 Assume  $C$ is a compatible C-relation.
 By $(C_4)$, we have 
 $C(x,1,1)$ for any $x\neq 1$, which means $1\precnsim x$. Take $x,y,z\in G$ with 
 $x\precsim y$, which means $\neg C(x,y,1)$. By $(C_1)$, we then have $\neg C(x,1,y)$. By compatibility, this implies
 $\neg C(x\iy,\iy,1)$ i.e $x\iy\precsim\iy$, hence $(CQ_2)$. By compatibility we also have 
 $\neg C(x^z,y^z,1)$, hence $(CQ_3)$. 
 Conversely, assume $(CQ_1),(CQ_2),(CQ_3)$ hold. By Lemma \ref{C2C3}, we already know that 
 $C$ satisfies $(C_2)$ and $(C_3)$. We first prove that $C$ is compatible.
 Take $x,y,z,u,v\in G$ with $C(x,y,z)$. We thus have $y\iz\precnsim x\iz$. By $(CQ_3)$, this implies
 $uy\iz\iu\precnsim ux\iz\iu$ i.e 
 $(uyv)(uzv)^{-1}\precnsim (uxv)(uzv)^{-1}$, so $C(uxv,uyv,uzv)$. This proves compatibility. Let $x\neq y$ in $G$. 
 $(CQ_1)$ implies $1\precnsim x\iy$ which means $C(x,y,y)$, so $C$ satisfies $(C_4)$. Now assume $\neg C(x,y,z)$, i.e 
 $x\iz\precsim y\iz$. By applying $(CQ_2)$ to this inequality, we get
 $x\iy\precsim z\iy$, hence $\neg C(x,z,y)$, which proves that $C$ satisfies $(C_1)$.
\end{proof}

\begin{Rem}
  By combining $(CQ_3)$ and $(CQ_2)$ we obtain an improved version of $(CQ_2)$:  \newline
  $x\precsim y\Rightarrow x\iy\precsim\iy\wedge \iy x\precsim\iy$.  
  We will also often use the contra-position of $(CQ_2)$:
  \[(CQ_2')\quad y\precnsim x\Rightarrow \iy\precnsim x\iy.\]
\end{Rem}

 \subsection{C-q.o's of elementary type}

  Before investigating the structure of an arbitrary C-q.o.g, we want to understand the structure of C-q.o's of elementary type.
  Assume first that $\precsim$ is a valuational C-q.o on $G$. We then have 
  $g\precsim h\Leftrightarrow v(g)\geq v(h)$ for any $g,h\in G$. In other words, $\precsim$ is the C-q.o. induced by $v$.
  The order-type case is a bit more complicated. Note first that if we start with an ordered group $(G,\leq)$, if $C$ 
  is the C-relation induced by $\leq$ and if $\precsim$ is the corresponding C-q.o, then there is no reason for $\leq$ and 
  $\precsim$ to be the same. In fact, an order-type C-q.o can never be an order.
  Let us have a closer look at $\precsim$.
  It is easy to see 
  from the definition of $C$ and $\precsim$ that $x\precsim y$ is equivalent to the formula
  $(x=y\vee y\neq1)\wedge(x\leq y\vee x\leq1)$. From this formula we immediately see that the following holds:
  \begin{enumerate}[(i)]
   \item If $\{x,y\}<1$, then $x\sim y$.
   \item If $x<1<y$, then $x\precnsim y$.
   \item If $1<x$ and $1<y$, then $x\precsim y\Leftrightarrow x\leq y$.
  \end{enumerate}

  In other words, $\precsim$ is given by:  
  $1\precnsim (G^-,\precsim_t)\precnsim (G^+,\leq)$, where $\precsim_t$ is the trivial q.o on $G^-$.
  This structure completely characterizes order-type C-q.o's:
  
  \begin{Prop}\label{ordertypegroup}
     Let $(G,\precsim)$ be a C-q.o.g. The C-q.o
     $\precsim$ is order-type if and only if
    there exists a subset $G^+$ of $G$ such that the following holds:
     \begin{enumerate}[(i)]
      \item $G=G^+\sqcup G^-\sqcup \{1\}$ (disjoint unions), where $G^-:=\{\ig\mid g\in G^+\}$.
      \item $1\precnsim G^-\precnsim G^+$.
      \item $\precsim$ is trivial on $G^-$ and coincides with an order $\leq$ on $G^+$.
     \end{enumerate} 
    \end{Prop}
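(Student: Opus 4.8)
The plan is to prove the two implications separately. The forward direction is essentially contained in the computation preceding the statement, so I would dispose of it quickly; the reverse direction rests on the positive-cone description of a group order, and that is where the real work lies.

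For the forward direction, suppose $\precsim$ is order-type, so it is induced by the order-type C-relation attached to some total group order $\leq$ on $G$. The computation made just before the proposition shows that $x\precsim y$ is equivalent to $(x=y\vee y\neq1)\wedge(x\leq y\vee x\leq1)$ and establishes facts (i)--(iii) there. I would simply take $G^+:=\{g\in G\mid 1<g\}$, the positive cone of $\leq$. Then $G^-=\{\ig\mid g\in G^+\}$ is the negative cone, and since $\leq$ is a total order on a group we get the disjoint decomposition of (i). Condition (ii) then follows from $(CQ_1)$ (for $1\precnsim G^-$) and from fact (ii) of that discussion (for $G^-\precnsim G^+$), while (iii) follows from facts (i) and (iii).

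For the reverse direction, assume a subset $G^+$ satisfying (i)--(iii) is given and set $P:=G^+$. I would first note the $\sim$-class structure forced by (i)--(iii): $\{1\}$ is the $\precsim$-minimum, $G^-$ is a single class lying strictly above it, and above $G^-$ sit the singleton classes $\{g\}$, $g\in G^+$, ordered by the given order; in particular $G^+=\{x\in G\mid G^-\precnsim x\}$. The aim is to show $P$ is the positive cone of a bi-invariant order, i.e. that $P\cdot P\subseteq P$ and $P$ is conjugation-invariant; together with (i) this is the standard characterization of a group order satisfying \eqref{axiomoagr}, which I would invoke to define $\leq$ by $x\leq y\Leftrightarrow x=y\vee \ix y\in P$. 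Conjugation-invariance is immediate, since $x\in P\Leftrightarrow\ix\precnsim x$ and $(CQ_3)$ together with $(x^{-1})^z=(x^z)^{-1}$ give $x\in P\Leftrightarrow x^z\in P$. For closure under the product, take $g,h\in G^+$; then $\ih\in G^-$ and $G^-\precnsim G^+$ give $\ih\precnsim g$, and applying $(CQ_2')$ to this inequality yields $h\precnsim gh$, whence $G^-\precnsim h\precnsim gh$ shows $gh\in G^+$.

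The main obstacle is the final step: verifying that the order-type C-q.o $\precsim'$ induced by the order $\leq$ just built actually coincides with $\precsim$. Both quasi-orders share the block structure $\{1\}\prec G^-\prec G^+$ (the negative cone of $\leq$ is $P^{-1}=G^-$), both are trivial on $G^-$ and have $1$ as strict minimum, so everything reduces to showing that $\leq$ and $\precsim$ agree on $G^+$. Here care is needed, because $(CQ_2)$ is an equivalence for $\precsim$ but does not transfer strictness directly. I would argue by cases for $x,y\in G^+$ with $x\neq y$, using the ``left'' form $y\precsim x\Leftrightarrow \ix y\precsim\ix$ of $(CQ_2)$ (obtained by combining $(CQ_2)$ and $(CQ_3)$): since the only class strictly below $G^-$ is $\{1\}$ and the only classes strictly above it are those of $G^+$, one gets $y\precnsim x\Rightarrow \ix y\in G^-$ and $x\precnsim y\Rightarrow \ix y\in G^+$. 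Hence $\ix y\in P\Leftrightarrow x\precnsim y$, so $\leq$ and $\precsim$ agree on $G^+$, and therefore $\precsim'=\precsim$, which shows $\precsim$ is order-type.
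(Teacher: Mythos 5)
Your proof is correct. The forward direction is the same as the paper's (both just cite the computation preceding the statement with $G^+=\{g\mid 1<g\}$). For the converse, both you and the paper build a total bi-invariant order $\leq$ on $G$ and then check that the C-q.o it induces coincides with $\precsim$, but the construction is organized differently: the paper takes the given order on $G^+$ from (iii), extends it to $G^-$ by $h\leq g\Leftrightarrow \ih\geq\ig$, declares $G^-<1<G^+$, and proves the key Claim $x\iy<1\Leftrightarrow x<y$, from which bi-invariance and then $C'=C$ follow; you instead verify that $G^+$ is a conjugation-invariant subsemigroup complementing $G^-\sqcup\{1\}$ and invoke the positive-cone characterization of group orders, deferring the comparison with $\precsim$ to the last step. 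The underlying computations are essentially identical --- $(CQ_2')$ to show that a product of positives is positive, and the left form of $(CQ_2)$ to relate the sign of $\ix y$ to the comparison of $x$ and $y$, which is exactly the content of the paper's Claim. Your route gets bi-invariance for free from the cone characterization but must then re-identify the cone-induced order with the given order on $G^+$; the paper's route starts from that order and must prove invariance directly. Neither is materially shorter, and both correctly isolate the one genuinely delicate point, namely that $\ix y\in G^+$ iff $x\precnsim y$ for $x,y\in G^+$.
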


    \begin{proof}
    We already showed that order-type q.o's satisfy this condition with $G^+=\{g\mid 1<g\}$. Let us prove the converse.
    We denote by $C$ the C-relation corresponding to $\precsim$.    
  Define an order on $G^-$ as follows:
 $h\leq g\Leftrightarrow \ih\geq \ig$. Now extend $\leq$ on all of $G$ by 
 $G^-<1<G^+$. Note that $x\in G^-$ if and only if $x\precnsim \ix$. By $(CQ_3)$, it then follows that $G^-$ and $G^+$ are stable under 
 conjugation. We first want to show that $(G,\leq)$ is an ordered group. This will be a
 consequence of the following claim:\\

 \underline{Claim:} For any $x,y\in G, x\iy<1\Leftrightarrow x<y$.
 
 \underline{Proof:} Note that it is sufficient to prove $\Rightarrow$. Indeed, assume $\Rightarrow$ has been proved,
 and assume $\neg(x\iy<1)$. This implies $y\ix\leq1$, which by $\Rightarrow$ implies $y<x\vee y=x$, so $\neg(x<y)$.
 
 Assume then that $x\iy<1$.
 The case $x=1\vee y=1$ is clear, so assume $y\neq1\wedge x\neq1$. Since $x\neq y$, it is sufficient to prove
 $x\leq y$.
 If $y\precnsim x$, then by $(CQ_2')$ we have $\iy\precnsim x\iy$. Since $\precsim$ is trivial on $G^-$, this
 implies $x\iy\in G^+$, which contradicts $x\iy<1$. 
 Thus, we have $x\precsim y$. We consider two cases:
 
  Case 1: $y\in G^+$. It follows immediately from $x\precsim y$ that $x\leq y$. 
  
  Case 2: Assume $y\in G^-$. It follows from 
 $1\neq x\precsim y$ that $x\in G^-$. Note that 
 we have $y\ix\in G^+$.  By conjugation, this implies $\ix y\in G^+$, hence $y\precnsim \ix y$.
 By $(CQ_2')$, this implies $\iy\precnsim \ix$, hence $x\leq y$.
  This proves the claim.\\
 
 Now let us show that $(G,\leq)$ is an ordered group. Assume $x<y$ and take $z\in G$.
 By the claim, we have $x\iy<1$, hence $xz(yz)^{-1}<1$, hence $xz<yz$. By conjugation, we also have 
 $\iy x<1$, so $(zy)^{-1}(zx)<1$. By the claim, this means $zx<zy$. This proves that $(G,\leq)$ is an ordered group.

 Denote by $C'$ the C-relation induced by $\leq$. We show that $C'=C$.
 Assume $C(x,y,z)$ holds. The case $x\neq y=z$ is obvious, so assume
 $y\neq z$. 
 We have $y\iz\precnsim x\iz$ and  $z\iy\precnsim x\iy$. We either have 
 $z\iy\in G^+$ or $y\iz\in G^+$. Without loss of generality, we can assume that the former holds (the other case is done similarly).
 We then have $z\iy, x\iy\in G^+$ with
 $z\iy\precnsim x\iy$, which means $1<z\iy<x\iy$. It follows that 
 $y<z<x$, hence $C'(x,y,z)$. 
 Conversely,
 assume $C'(x,y,z)$ holds. Since $C'$ is compatible, this implies 
 $C'(x\iz,y\iz,1)$, which means $1<x\iz$ and $y\iz<x\iz$. We thus have $x\iz\in G^+$ and $y\iz<x\iz$, which means
 $y\iz\precnsim x\iz$, hence $C(x,y,z)$
\end{proof}
  
  All of this shows us how to
   construct $\precsim$ from $\leq$ and vice-versa. More precisely, we see that $\leq$ and $\precsim$ define the same sets:
   
   \begin{Prop}
    Let $(G,\leq)$ be an ordered group and $\precsim$ the corresponding C-q.o. The relation 
    $\precsim$ is quantifier-free definable in the language $\{1,.,^{-1},\leq\}$  and $\leq$ is 
    quantifier-free definable in $\{1,.,^{-1},\precsim\}$.
   \end{Prop}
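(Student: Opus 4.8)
The plan is to observe that both relations have, in effect, already been given by explicit quantifier-free formulas in the course of proving Proposition~\ref{ordertypegroup}, so that no quantifiers are ever needed. For the definability of $\precsim$ in $\{1,.,^{-1},\leq\}$ I would simply invoke the equivalence established just before Proposition~\ref{ordertypegroup}, namely
\[x\precsim y\Leftrightarrow (x=y\vee y\neq 1)\wedge(x\leq y\vee x\leq 1).\]
The right-hand side uses only $1$, equality and $\leq$, hence is quantifier-free in the required language, which settles the first half at once.

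For the definability of $\leq$ in $\{1,.,^{-1},\precsim\}$ I would reduce to expressing the single relation ``$w<1$'' by means of $\precsim$, using the elementary ordered-group fact $x<y\Leftrightarrow x\iy<1$ (this is the Claim proved inside Proposition~\ref{ordertypegroup}). The key point is the characterization of $G^-$ recorded in that same proof: for every $w\in G$ one has $w<1$, i.e. $w\in G^-$, if and only if $w\precnsim w^{-1}$. Indeed, if $w\in G^-$ then $w^{-1}\in G^+$ and $w\precnsim w^{-1}$ by $G^-\precnsim G^+$; conversely $w\in G^+$ gives $w^{-1}\precnsim w$, and $w=1$ gives $w\sim w^{-1}$, so in both remaining cases $w\precnsim w^{-1}$ fails. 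Substituting $w=x\iy$, so that $w^{-1}=y\ix$, I obtain
\[x<y\Leftrightarrow x\iy\precnsim y\ix,\]
and therefore
\[x\leq y\Leftrightarrow x=y\vee x\iy\precnsim y\ix.\]
Since $\precnsim$ is quantifier-free definable from $\precsim$, the right-hand side is a quantifier-free formula in $\{1,.,^{-1},\precsim\}$, as required.

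Both directions are thus almost immediate consequences of formulas already in hand, so I do not expect a genuine obstacle; the only points deserving care are the routine verifications that $w\precnsim w^{-1}$ correctly singles out $G^-$ for every $w$ (in particular that $w=1$ is excluded on both sides, which holds since $1\sim 1^{-1}$) and that the substitution $w=x\iy$ is legitimate because $(x\iy)^{-1}=y\ix$. The conceptual content is simply the recognition that the structural description of order-type C-q.o's obtained in Proposition~\ref{ordertypegroup} makes both translations quantifier-free.
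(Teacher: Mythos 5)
Your proof is correct. The first half coincides with the paper's: both simply quote the quantifier-free equivalence $x\precsim y\Leftrightarrow(x=y\vee y\neq1)\wedge(x\leq y\vee x\leq1)$ recorded before Proposition \ref{ordertypegroup}. For the second half you take a genuinely different (and more economical) route. The paper defines $\leq$ by a four-way case split according to whether $x$ and $y$ lie in $G^+$, $G^-$ or $\{1\}$, namely $(x,y\in G^+\wedge x\precsim y)\vee(x,y\in G^-\wedge \iy\precsim\ix)\vee(x\in G^-\wedge y\in G^+\cup\{1\})\vee(x=1\wedge y\in G^+)$, where $G^+$ and $G^-$ are defined by $\ix\precnsim x$ and $x\precnsim\ix$. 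You instead use the ordered-group identity $x<y\Leftrightarrow x\iy<1$ to reduce everything to the single predicate ``$w\in G^-$'', which you characterize as $w\precnsim w^{-1}$ --- the same formula the paper uses to define $G^-$ --- and thereby obtain the single compact formula $x\leq y\Leftrightarrow x=y\vee x\iy\precnsim y\ix$. Both arguments rest on the same structural description of order-type C-q.o's; yours buys a shorter defining formula at the price of invoking translation-invariance of $\leq$, while the paper's case analysis is more direct but longer. Your side verifications (that $w\precnsim w^{-1}$ fails for $w=1$ and for $w\in G^+$, and that $(x\iy)^{-1}=y\ix$) are all correct, so there is no gap.
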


   \begin{proof}
    As already mentioned, $x\precsim y$ is equivalent to $(x=y\vee y\neq1)\wedge(x\leq y\vee x\leq1)$.
    Conversely, $x\leq y$ is equivalent to the formula:\newline
    $(x,y\in G^+\wedge x\precsim y)\vee(x,y\in G^-\wedge \iy\precsim \ix)\vee(x\in G^-\wedge y\in G^+\cup\{1\})
  \vee(x=1\wedge y\in G^+)$, and $G^+$ and $G^-$ are respectively defined by the formulas 
  $\ix\precnsim x$ and $x\precnsim \ix$.
   \end{proof}
  
\begin{Rem}\label{lookslikeexplain}
   We just saw what C-q.o groups of elementary type look like. In Section \ref{structuresection}, our work will consist in showing 
that any C-q.o group is in some sense a ``mix'' of the elementary ones. This means that we will identify parts of the group where the q.o 
is  ``order-type-like'' and parts where it is ``valuational-like''. Intuitively, we want to say that a q.o is ``like'' an elementary-type q.o on a subset $T$ of $G$ if 
it shares the important properties of this elementary q.o.
We will say that the q.o $\precsim$ is \textbf{valuational-like  on $T$} if 
$gh\precsim\max(\{g,h\})$ for any $g,h\in T$. We will say that $\precsim$ is \textbf{order-type-like  on $T$} if $T$ can be partitioned into two subsets, 
$T^-$ and $T^+$, such that the following holds: $T^-=\{\ig\mid g\in T^+\}$, $T^-\precnsim T^+$ and $\precsim$ is trivial on $T^-$ (i.e 
$g\sim h$ for all $g,h\in T^-$). We say that $\precsim$ is \textbf{elementary-type-like on $T$} if it is either valuational-like or order-type-like on $T$.
  \end{Rem}

  \subsection{Connection with compatible q.o's}\label{compatiblesection}
  
  We now want to establish the connection between the notion of C-q.o developed in this paper and the work done
  in \cite{Lehericy} which we mentioned in the introduction. 
  As we showed in \cite{Lehericy}, we can associate a compatible C-relation to any compatible quasi-order defined 
  on an abelian group. However, this does not mean that compatible q.o's are C-q.o's. In fact, we have the following:
  
  \begin{Prop}\label{compatibleqo}
   Let $(G,\precsim)$ be a compatible quasi-ordered abelian group. Then $\precsim$ is a C-q.o if and only if
  every element of $G$ is v-type. 
  \end{Prop}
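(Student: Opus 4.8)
The plan is to work additively (matching the ambient axioms $(Q_1),(Q_2)$) and to check the axiomatization of C-q.o's against them. In additive notation, $(CQ_1)$ reads ``$0\precnsim x$ for all $x\neq 0$'' and $(CQ_2)$ reads ``$x\precsim y\Leftrightarrow x-y\precsim -y$'', while $(CQ_3)$ holds trivially because conjugation is trivial in an abelian group; recall also that ``v-type'' means $g\sim -g$ and ``o-type'' means $g=0$ or $g\nsim -g$.

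For the direction ``C-q.o $\Rightarrow$ all v-type'' I would argue by contradiction. Suppose some $g\neq 0$ is o-type, so that $g\nsim -g$. By $(CQ_1)$ we have $0\precsim g$, and the crucial move is that the pair $(g,-g)$ now satisfies exactly the side condition $g\nsim -g$ needed to apply the compatibility axiom $(Q_2)$: feeding $0\precsim g\nsim -g$ into $(Q_2)$ yields $-g\precsim 0$. Since $-g\neq 0$, axiom $(CQ_1)$ also gives $0\precnsim -g$, and these two statements contradict each other. Hence there is no nonzero o-type element; as $0$ is v-type, every element is v-type.

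For the converse I must verify $(CQ_1)$ and $(CQ_2)$. For $(CQ_1)$, take $x\neq 0$; by $(Q_1)$ we have $x\nsim 0$, so by totality it suffices to rule out $x\precnsim 0$. Assuming $x\precnsim 0$, I would apply $(Q_2)$ to $x\precsim 0\nsim -x$ (legitimate since $-x\neq 0$) to obtain $0\precsim -x$; but $x$ is v-type, so $-x\sim x\precnsim 0$, whence $-x\precnsim 0$, contradicting $0\precsim -x$. Thus $0\precnsim x$, which is $(CQ_1)$.

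The main obstacle is $(CQ_2)$, and it is instructive to see why the naive approach fails: proving $x\precsim y\Rightarrow x-y\precsim -y$ by ``adding $-y$ to both sides'' would require $(Q_2)$ with the added element $z=-y$, whose side condition demands $y\nsim -y$; but every element being v-type means $y\sim -y$, so $(Q_2)$ is powerless to add $-y$ directly. To bypass this I would invoke the fact, recalled from \cite{Lehericy}, that since $G=G^v$ the quasi-order $\precsim$ is valuational on all of $G$, i.e. induced by a valuation $v$ with $g\precsim h\Leftrightarrow v(g)\geq v(h)$. Then $v(-y)=v(y)$ together with the ultrametric inequality give $(CQ_2)$ by a one-line computation: if $v(x)\geq v(y)$ then $v(x-y)\geq\min(v(x),v(y))=v(y)=v(-y)$, so $x-y\precsim -y$, and conversely $v(x-y)\geq v(y)$ forces $v(x)=v((x-y)+y)\geq v(y)$, so $x\precsim y$. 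With $(CQ_1),(CQ_2),(CQ_3)$ all verified, $\precsim$ is a C-q.o, which completes the proof.
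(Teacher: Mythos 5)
Your proof is correct, and it takes a genuinely different (and in one respect more complete) route than the paper's. For the forward direction the paper simply cites Proposition 2.13 of \cite{Lehericy}: the o-type elements form a subgroup $G^o$ on which $\precsim$ restricts to a group order, so a non-trivial $G^o$ contains negative elements, contradicting $(CQ_1)$. Your argument reaches the same conclusion from first principles: for an o-type $g\neq 0$ the side condition $g\nsim -g$ is exactly what makes $(Q_2)$ applicable to $0\precsim g$, giving $-g\precsim 0$ against $(CQ_1)$; this is an elementary shortcut that avoids the external structure result. For the converse, the paper's proof is silent --- it only establishes that a C-q.o forces $G^o$ to be trivial --- and implicitly relies on the fact, recalled in the introduction and in Section \ref{compatiblesection}, that $\precsim$ is valuational on $G^v$, so that when $G=G^v$ it is induced by a valuation and is therefore a C-q.o by Example \ref{exempleCgroupe}(b). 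You make this half explicit, and your diagnosis of why $(Q_2)$ cannot yield $(CQ_2)$ directly (the element $-y$ one wants to add is equivalent to $y$, so the side condition of $(Q_2)$ fails everywhere) is exactly the right observation; the appeal to the valuational structure of $G^v$ is the same input from \cite{Lehericy} that the paper itself uses, so nothing is circular. The only cosmetic remark is that once you know $\precsim$ is induced by a valuation you could conclude immediately that it is a valuational C-q.o without re-verifying the axioms, but your explicit checks of $(CQ_1)$ and $(CQ_2)$ (and the observation that $(CQ_3)$ is vacuous in the abelian case) are correct as written.
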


  \begin{proof}
   By Proposition 2.13 of \cite{Lehericy}, we know that the set $G^o$ of all o-type elements of $G$ is a 
   subgroup of $G$ and that $(G^o,\precsim)$ is an ordered abelian group. If $G^o$ is non-trivial, then 
   $G$ contains negative elements, which contradicts axiom $(CQ_1)$, so $\precsim$ cannot be a C-q.o.
   Thus, $G^o$ must be trivial, which means that every element of $G$ is v-type.
  \end{proof}

  Now let $(G,\precsim)$ be a compatible quasi-ordered abelian group.
  Proposition \ref{compatibleqo} states that, if the subgroup $G^o$ of o-type elements is non-trivial, then 
  $\precsim$ is not a C-q.o.
  However, we can transform $\precsim$ into a corresponding 
  C-q.o $\precsim^{\ast}$.
  We know that $\precsim$ coincides with an order $\leq$ on $G^o$ and is valuational on $G^v$. Now define $\precsim^{\ast}$
  as follows: on $G^o$, $\precsim^{\ast}$ is the order-type C-q.o corresponding to $\leq$. On 
  $G^v$, $\precsim^{\ast}$ coincides with $\precsim$. Finally, declare $G^o\precnsim^{\ast} G^v$. Then 
  $\precsim^{\ast}$ is a C-q.o. Now denote by $C^{\ast}$ the C-relation corresponding to the C-q.o $\precsim^{\ast}$ and denote by $C$ the C-relation 
  induced by the compatible q.o $\precsim$ as defined in Proposition 4.1 of \cite{Lehericy}. 
  We recall that in \cite{Lehericy}, we defined the C-relation induced by $\precsim$ as a sort of ``mix'' between the definition of 
  a C-relation induced by an order and the C-relation induced by a valuation. More precisely, Proposition 4.1 of 
  \cite{Lehericy} defines $C$ as follows: the relation $C(x,y,z)$ holds if and only if the following formula is true:\newline  
  $(x\neq y=z)\vee(x\iz\in G^v\wedge (y\iz\precnsim x\iz))
    \vee(y\iz,x\iz\in G^o
   \wedge (1\precnsim x\iy\wedge 1\precnsim x\iz))$.  
  By distinguishing the cases $x\iz\in G^v$ and $x\iz\notin G^v$, one can 
show that $C(x,y,z)$ holds if and only if $y\iz\precnsim^{\ast} x\iz$. It then follows that $C=C^{\ast}$.


\section{Structure of C-q.o.g's}\label{structuresection}
 
In this section we describe the structure of an arbitrary C-q.o.g $(G,\precsim)$. We start by giving four different examples of 
C-q.o's. All of them are obtained by lifting (with the notion of lifting defined after Lemma \ref{quotient}). 
 It is possible to directly check that each of them satisfy the axioms 
 of C-q.o's, but this will actually be a consequence of Propositions \ref{liftingweak}, \ref{semidirectproductlift} and \ref{weldingconstruction}. 
 Examples (a),(c) and (d) are obtained by direct application of \ref{liftingweak}, and example (e) is proved from example (d) 
 with Proposition \ref{semidirectproductlift}. Finally, to prove example (b), 
 apply Proposition \ref{weldingconstruction} on the C-q.o group $(G,\precsim)$ 
 from example (a) with $g:=(-1,0)$.

 \begin{Exs}\label{examplesfondamentales}
 
 Set $G:=\entiersrel^2$. We let $\precsim_o$ denote the C-q.o induced by the usual order of $\Z$ (which is characterized in Proposition \ref{ordertypegroup}) and $\precsim_v$ 
 the C-q.o induced by the trivial 
  valuation on $\Z$. Define the valuation $v_G$ on $G$ by 
 $v_G(a,b)=\left\{\begin{array}{cc}
                1 &\text{ if } a\neq0.\\
                2 &\text{ if } a=0\neq b.\\
                \infty & \text{ if } a=b=0.
               \end{array}\right.$
  
               We have $G^1/G_1\cong G^2/G_2\cong\entiersrel$.
  We define three different C-q.o's on $G$:
  \begin{enumerate}[(a)]
   \item Choose $\precsim_1:=\precsim_o$ and $\precsim_2:=\precsim_v$. 
   The lifting of $(\precsim_1,\precsim_2)$ to $G$ is the C-q.o given by :
   
   $(0,0)\precnsim (\{0\}\times(\Z\backslash\{0\}),\precsim_t)\precnsim (-\entiers\times\entiersrel,\precsim_t)
 \precnsim (\entiers\times \entiersrel,\precsim)$,
 
   where $\precsim_t$ always denotes the trivial q.o and $\precsim$ is defined on $\entiers\times \entiersrel$ as follows: 
 $(a,b)\precsim (c,d)\Leftrightarrow a\leq c$. In this example, $\precsim$ is valuational on $\{0\}\times\Z$ and 
  order-type-like  on $(\Z\backslash\{0\}\times\Z)$. The set of v-type elements is $\{0\}\times\Z$, the set of 
$o^-$-type elements is $-\N\times\Z$ and the set of $o^+$-type elements is $\N\times\Z$.
 \item Coarsen the C-q.o of the previous example by declaring that \newline
 $(\{0\}\times(\Z\backslash\{0\}),\precsim_t)\sim(-\entiers\times\entiersrel,\precsim_t)$. This new C-q.o is now given by:
 
 $(0,0)\precnsim ((-\N_0\times\Z)\backslash\{(0,0)\},\precsim_t)
 \precnsim (\entiers\times \entiersrel,\precsim)$.
  
   All elements of $G$ in this example have the same type as in (a).
 \item Define $\precsim_1=\precsim_2=\precsim_o$. The lifting of  
 $(\precsim_1,\precsim_2)$ to $G$ is the C-q.o:
 
 $(0,0)\precnsim (\{0\}\times-\entiers,\precsim_t)\precnsim (\{0\}\times\entiers,\leq)\precnsim
 (-\entiers\times\entiersrel,\precsim_t)\precnsim(\entiers\times\entiersrel,\precsim)$,
 
 where $\leq$ is the natural order of $\entiersrel$ and $\precsim$ is defined on $\entiers\times \entiersrel$ as follows: 
 $(a,b)\precsim (c,d)\Leftrightarrow a\leq c$. Here $\precsim$ is order-type-like  on 
 $\{0\}\times\Z$ and on $(\Z\backslash\{0\}\times\Z)$. The set of $o^-$-type elements is 
 $\{0\}\times-\N\cup-\N\times\Z$, the set of $o^+$-type elements is 
  $\{0\}\times\N\cup\N\times\Z$, and $(0,0)$ is the only v-type element. \\

 \item Let $\precsim$ be the C-q.o of example (a) on $G$. 

Set  
 $H:=\coprod_{\Z}G=\{(g_n)_{n\in\Z}\in G^{\Z}\mid\text{ the support of $(g_n)_{n\in\Z}$ is finite}\}$ ($H$ is thus the Hahn sum of $\Z$-many copies of $G$). We denote the elements of $H$ as 
 formal sums $h=\sum_{n\in\Z}g_n\tau_n$.
 $H$ can be endowed 
 with a valuation $w_H:H\to \Z\cup\{\infty\}$, where $w_H(h)$ is defined as the minimum of the support of $h$. In this context, we have 
 $H^{\gamma}=\{\sum_{n\in\Z}g_n\tau_n\mid \forall n<\gamma, g_n=(0,0)\}$,
$H_{\gamma}=\{\sum_{n\in\Z}g_n\tau_n\mid \forall n\leq\gamma, g_n=(0,0)\}$ and
 $H^{\gamma}/H_{\gamma}\cong G$ for every $\gamma\in\Z$.
We endow 
  $H$ with the lifting $\precsim_H$ of $(\precsim)_{\gamma\in\Z}$. 
 Here the C-q.o alternates infinitely many times between order-type-like parts and valuational-like parts. More precisely, 
for any $h=\sum_{n\in\Z}g_n\tau_n\in H$ with $\gamma:=w_H(h)$, then $h$ is v-type if and only if $g_{\gamma}\in \{0\}\times\Z$, 
$h$ is $o^-$-type if and only if $g_{\gamma}\in -\N\times\Z$ and $h$ is $o^+$-type if and only if
$g_{\gamma}\in \N\times\Z$. For any $\gamma\in\Z$, $\precsim_H$ is valuational-like on 
$\{h=\sum_{n\in\Z}g_n\tau_n\in H\mid w_H(h)=\gamma, g_{\gamma}\in \{0\}\times\Z\}$ and 
is order-type-like on $\{h=\sum_{n\in\Z}g_n\tau_n\in H\mid w_H(h)=\gamma,g_{\gamma}\in(\Z\backslash\{0\})\times\Z\}$.
\\
 
 We can also give a non-abelian example:

  \item Let $(H,\precsim_H)$ be  as in the previous example. For any $k\in \Z$, let $\alpha_k$ be the 
 $k$-th shift on $H$ (i.e $\alpha_k(\sum_{n\in\Z}g_n\tau_n)=\sum_{n\in\Z}g_{n-k}\tau_n$). This is a group automorphism of $H$. 
 Set $F:=\Z\ltimes_{\alpha}H$ ($\ltimes_{\alpha}$ denotes the semi-direct product) and define $\precsim_F$ by:\newline
 $(k,h_1)\precsim_F (l,h_2)\Leftrightarrow (k\precsim_v l)\wedge (l\neq 0\vee(l=0\wedge h_1\precsim_H h_2))$.
 Here the elements of $H$ have the same type as in (d).
Elements of the form $(l,h)$ with $l\neq0$ are v-type.
  \end{enumerate}

 \end{Exs}
 
 We see on each of these examples that $G$
 can be partitioned into strictly convex subsets on each of which $\precsim$ is  elementary-type-like. We want
 to show that this is true for an arbitrary C-q.o.g. 
As the terminology and Examples \ref{examplesfondamentales} suggest, it will turn out that $\precsim$ is valuational-like on the set of v-type elements
and  order-type-like  around o-type elements. 
 Note that
 Example (b) seems counter-intuitive. Indeed, we would expect the C-q.o to separate o-type elements from v-type elements,
 but we see that 
$(0,1)\sim (-1,1)$. This means that the C-q.o does not distinguish between the v-type element $(0,1)$ and the o-type element $(-1,1)$. 
This phenomenon is what we call ``welding''.
 We say that $G$ is \textbf{welded} at $h$, or that $h$ is a \textbf{welding point} of $G$ 
 if  there exists an element $g$ such that $g$ and $h$ are of different type and $g\sim h$. 
 We will see that the existence of welding 
 in certain groups makes things technically slightly more difficult but does not fundamentally change the structure 
 of a C-q.o.g.\\

 The following propositions show the relevance of distinguishing o-type elements from v-type elements and 
 justify our terminology:
 
 \begin{Prop}\label{vtypegroup}
  The C-q.o $\precsim$ is valuational if and only if every element of $G$ is v-type.
 \end{Prop}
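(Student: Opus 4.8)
The statement is a biconditional, and the plan is to handle the two implications separately, the forward one being essentially formal and the converse requiring us to reconstruct a valuation from $\precsim$. For the forward direction, suppose $\precsim$ is valuational. By definition its associated C-relation is the valuational one induced by some valuation $v$, so $C(x,y,1)\Leftrightarrow v(y)>v(x)$, and Definition \ref{deffondamentale} gives $g\precsim h\Leftrightarrow\neg C(g,h,1)\Leftrightarrow v(g)\geq v(h)$. Since every valuation satisfies $v(g)=v(\ig)$ (Remark \ref{propval}), we obtain $g\sim\ig$ for every $g$, i.e. every element is v-type. This direction is immediate.

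For the converse I would build a valuation directly out of $\precsim$. Set $\Gamma:=(G\setminus\{1\})/\sim$ equipped with the reverse of the order that $\precsim$ induces on $\sim$-classes, adjoin $\infty$ on top, and define $v\colon G\to\Gamma\cup\{\infty\}$ by $v(1)=\infty$ and $v(g)=cl(g)$ for $g\neq1$. This is arranged precisely so that $g\precsim h\Leftrightarrow v(g)\geq v(h)$. Axiom (i) then holds because $\precsim$ is total, axiom (ii) because $(CQ_1)$ makes $1$ the unique $\precsim$-minimum (hence the unique element of maximal value), and axiom (iv) is a direct restatement of $(CQ_3)$. The whole difficulty is concentrated in axiom (iii).

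By Remark \ref{propval}, once (ii) holds, axiom (iii) is equivalent to the conjunction $v(g)=v(\ig)$ and $v(gh)\geq\min(v(g),v(h))$. The first is exactly the hypothesis that every element is v-type. The second, read through the order-reversing map $v$, says precisely that $gh\precsim\max(\{g,h\})$, i.e. that $\precsim$ is valuational-like on all of $G$ in the sense of Remark \ref{lookslikeexplain}; this is the key computation, which I would derive from the axioms. If $g\precsim h$, apply $(CQ_2)$ with $x=gh$, $y=h$: this turns $gh\precsim h$ into $g\precsim\ih$, and since $h\sim\ih$ (v-type) the latter is just $g\precsim h$, which holds. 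If instead $h\precsim g$, apply $(CQ_2)$ with $x=gh$, $y=g$ to reduce $gh\precsim g$ to $h^g\precsim\ig$, then conjugate by $\ig$ via $(CQ_3)$ (using $(h^g)^{\ig}=h$ and $(\ig)^{\ig}=\ig$) to reach $h\precsim\ig$, which since $g\sim\ig$ is again just $h\precsim g$. Thus $gh\precsim\max(\{g,h\})$ in all cases.

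Once all four axioms are verified, $v$ is a genuine valuation whose induced quasi-order is $\precsim$ by construction. The valuational C-relation $C_v$ of $(G,v)$ (Example \ref{exempleCgroupe}) induces this same quasi-order, so by the uniqueness in Proposition \ref{corrbij} it must coincide with the C-relation defining $\precsim$; hence $\precsim$ is valuational. The main obstacle is the valuational-like computation above, and in particular the non-abelian case $h\precsim g$, where one genuinely has to combine the right-multiplication form of $(CQ_2)$ with a conjugation step from $(CQ_3)$ rather than rely on commutativity.
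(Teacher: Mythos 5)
Your proof is correct and follows essentially the same route as the paper: the forward direction is immediate, and the converse reduces to verifying the ultrametric inequality $gh\precsim\max(\{g,h\})$ via $(CQ_2)$ together with the v-type hypothesis (the paper phrases it as $g\ih\precsim\max(g,h)$ and, in the case $h\precsim g$, uses that the product itself is v-type where you instead use a conjugation step via $(CQ_3)$ --- a negligible variation). The explicit construction of $\Gamma=(G\setminus\{1\})/\sim$, the check of axioms (i), (ii), (iv), and the appeal to Proposition \ref{corrbij} at the end are all details the paper leaves implicit behind the phrase ``we only have to check that the ultrametric inequality is satisfied,'' so you have simply written out the same argument more fully.
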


 \begin{proof}
  If $\precsim$ is valuational, then every element must obviously be v-type. Conversely, assume that
   every element is equivalent to its inverse. We only have to check that the ultrametric inequality is satisfied.
   Let $g,h\in G$. If $h\precsim g$, then by $(CQ_2)$ we have 
  $g\ih\sim h\ig\precsim\ig\sim g$. If $g\precsim h$, then we have $g\ih\precsim\ih\sim h$. In any case, we have
  $g\ih\precsim \max(g,h)$.
 \end{proof}

 \begin{Prop}
   The C-q.o $\precsim$ is order-type if and only if every element of $G$ is o-type and
    $G$ contains exactly one equivalence 
   class of $o^-$-type elements.
 \end{Prop}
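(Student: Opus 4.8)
The plan is to reduce everything to Proposition \ref{ordertypegroup}, which characterises order-type C-q.o's. I set $G^+:=\{g\in G\mid\ig\precnsim g\}$ and $G^-:=\{g\in G\mid g\precnsim\ig\}$, i.e.\ $G^+$ is the set of o$^+$-type elements and $G^-$ the set of o$^-$-type elements. The proposition then amounts to showing that under the stated hypotheses this particular $G^+$ satisfies conditions (i)--(iii) of Proposition \ref{ordertypegroup}, and conversely that those conditions force the hypotheses.

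For the direct implication, suppose $\precsim$ is order-type and take $G^+$ as given by Proposition \ref{ordertypegroup}. If $g\in G^+$ then $\ig\in G^-\precnsim G^+\ni g$, so $\ig\precnsim g$ and $g$ is o$^+$-type; symmetrically every element of $G^-$ is o$^-$-type, and $1$ is o-type by definition, so every element of $G$ is o-type. Moreover the o$^-$-type elements are exactly those of $G^-$, and since $\precsim$ is trivial on $G^-$ they form a single equivalence class (I assume here $G\neq\{1\}$, the trivial group being a degenerate case where there are no o$^-$-type elements).

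For the converse, assume every element is o-type and that all o$^-$-type elements lie in one class. Condition (i) is immediate: for $g\neq1$, totality together with $g\nsim\ig$ places $g$ in exactly one of $G^+,G^-$, and $g\mapsto\ig$ exchanges these two sets, so $G=G^+\sqcup G^-\sqcup\{1\}$ with $G^-=\{\ig\mid g\in G^+\}$. For (ii), $(CQ_1)$ yields $1\precnsim x$ for every $x\neq1$; and given $h\in G^-$ and $g\in G^+$, uniqueness of the o$^-$-class gives $h\sim\ig$ while $\ig\precnsim g$ (as $g$ is o$^+$-type), whence $h\precnsim g$, i.e.\ $G^-\precnsim G^+$. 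The triviality of $\precsim$ on $G^-$ (the first half of (iii)) merely restates the uniqueness hypothesis.

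The main obstacle is the rest of (iii): that $\precsim$ restricts to an \emph{order} on $G^+$, i.e.\ that distinct elements of $G^+$ are never equivalent. Suppose $g,h\in G^+$ with $g\sim h$ and $g\neq h$. Applying $(CQ_2)$ to $g\precsim h$ gives $g\ih\precsim\ih$, with $g\ih\neq1$; since $\ih\in G^-\precnsim G^+$, the element $g\ih$ cannot lie in $G^+$ (that would force $\ih\precnsim g\ih$), so $g\ih$ is o$^-$-type, and likewise $h\ig=(g\ih)^{-1}$ is o$^-$-type. Uniqueness of the o$^-$-class then forces $g\ih\sim h\ig$, but $g\ih$ being o$^-$-type means $g\ih\precnsim(g\ih)^{-1}=h\ig$ --- a contradiction. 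Hence $\precsim$ is antisymmetric on $G^+$, and being total and transitive it is a total order there; all of (i)--(iii) then hold and Proposition \ref{ordertypegroup} gives that $\precsim$ is order-type.
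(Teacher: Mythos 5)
Your proof is correct and follows essentially the same route as the paper: both directions are reduced to Proposition \ref{ordertypegroup} with $G^+$ taken to be the set of o$^+$-type elements, and the key point (antisymmetry of $\precsim$ on $G^+$) is obtained, as in the paper, by applying $(CQ_2)$ to $g\sim h$ and deriving a contradiction from the position of $g\ih$ and $h\ig$ relative to $G^-$. The only difference is cosmetic: the paper concludes directly from $\{g\ih,h\ig\}\precsim G^-$, while you phrase the same contradiction via the uniqueness of the o$^-$-class.
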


 \begin{proof}
  Both directions are proved with \ref{ordertypegroup}.
  If $\precsim$ is order-type, then we see from \ref{ordertypegroup} that every element is o-type and that  all the 
  $o^-$-type elements are contained in one class. For the converse, set
  $G^+:=\{o^+\text{-type elements}\}$ and $G^-:=\{o^-\text{-type elements}\}$. By assumption, $\precsim$ is trivial
  on $G^-$. We obviously 
  have $G=\{1\}\sqcup G^+\sqcup G^-$.
  Let $g\in G^+$. By definition of $o^+$-type, we have $\ig\precnsim g$. By assumption, the elements of $G^-$ are
  all equivalent to one another, hence $G^-\precnsim g$. This shows $1\precnsim G^-\precnsim G^+$. 
  We just have to check that 
  $\precsim$ is an order on $G^+$. Let $g,h\in G^+$ with $g\sim h$. By $(CQ_2)$, 
  $g\precsim h\precsim g$ implies $g\ih\precsim \ih$ and $h\ig\precsim\ig$, so we have 
  $\{g\ih,h\ig\}\precsim G^-$. This is only possible if $g\ih=1$ i.e $g=h$.
 \end{proof}

 \begin{Rem}
 As example \ref{examplesfondamentales}(c) above shows, the fact that every element is o-type is not 
 sufficient to insure that $\precsim$ is order-type.
 \end{Rem}

    \subsection{Some relations between $\precsim$ and the group operation}\label{relationsection}

Here we investigate the relation between multiplication and $\precsim$. 
More precisely, we want to understand how the equivalence
class of the product of two elements relates to the equivalence class of each factor. These results will play 
a fundamental role in the proofs of Section \ref{typecomponents}.
 We fix a C-q.o.g $(G,\precsim)$. We first note that in many cases the order of the factors will not matter:

 \begin{Lem}\label{commuteeqLem}
  For any $g,h\in G$, $hg\sim g\Leftrightarrow gh\sim g$.
 \end{Lem}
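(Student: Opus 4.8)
The plan is to reduce everything to the conjugation-invariance axiom $(CQ_3)$. The key algebraic observation is that $gh$ is obtained from $hg$ by conjugating with $g$: recalling the convention $x^z=zx\iz$ from Section \ref{preliminaries}, a direct computation gives
\[(hg)^g=g(hg)\ig=gh,\qquad g^g=gg\ig=g.\]
Thus conjugation by $g$ sends the pair $(hg,g)$ to the pair $(gh,g)$, leaving the second coordinate fixed.

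Once this is noted, the statement follows immediately by applying $(CQ_3)$ twice with $z:=g$. Taking $x:=hg$ and $y:=g$ gives $hg\precsim g\Leftrightarrow gh\precsim g$, while taking $x:=g$ and $y:=hg$ gives $g\precsim hg\Leftrightarrow g\precsim gh$. Combining these two equivalences yields
\[hg\sim g\iff(hg\precsim g\wedge g\precsim hg)\iff(gh\precsim g\wedge g\precsim gh)\iff gh\sim g,\]
which is exactly the claim.

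There is essentially no obstacle here beyond spotting the identity $(hg)^g=gh$; in particular, neither $(CQ_1)$ nor $(CQ_2)$ is needed. It is worth emphasizing that the argument works verbatim in the non-abelian setting, precisely because $(CQ_3)$ is itself a conjugation axiom rather than a commutativity assumption, so the lemma is genuinely about the interaction of $\precsim$ with conjugation.
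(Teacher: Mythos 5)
Your proof is correct and is exactly the paper's argument: the paper also deduces the lemma as a direct consequence of $(CQ_3)$ by conjugating the inequalities $hg\precsim g\precsim hg$ by $g$, using $(hg)^g=gh$. Your write-up just makes the computation explicit.
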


 \begin{proof}
  It is a direct consequence of $(CQ_3)$: take the inequalities 
  $hg\precsim g\precsim hg$ and conjugate by $g$.
 \end{proof}

\begin{Lem}\label{lemfond}
 Let $g,h\in G$. The following holds:
 \begin{enumerate}[(i)]
  \item If $h\precnsim \ig$, then $g\sim hg\sim gh$.
  \item Assume that $h\precnsim \{\ig, g\}$. 
 Then $\ih\precnsim \{g,\ig\}$ and we have $gh\sim g\sim g\ih$ and $\ig\sim h\ig\sim \ih\ig$.
 \item If $\{h,\ih\}\precsim \ig\precnsim g$, then $g\sim gh\sim g\ih\sim hg\sim \ih g$ and 

$\ig\sim \ig\ih\sim \ig h\sim \ih\ig\sim h\ig$.
 \end{enumerate}

\end{Lem}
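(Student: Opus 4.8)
The plan is to prove the three items in order, using (i) as the main engine for (ii) and (iii) and relying throughout on $(CQ_2)$, $(CQ_2')$, $(CQ_3)$ and Lemma \ref{commuteeqLem}. For (i), by Lemma \ref{commuteeqLem} it is enough to prove $hg\sim g$, since this yields $gh\sim g$ automatically. To obtain $hg\precsim g$ I would apply $(CQ_2)$ with $x=hg$ and $y=g$, which rewrites $hg\precsim g$ as $h\precsim\ig$, part of the hypothesis. For the reverse inequality $g\precsim hg$, applying $(CQ_2)$ with $x=g$ and $y=hg$ rewrites it as $\ih\precsim\ig\ih$, and this is exactly what $(CQ_2')$ delivers (even strictly) from $h\precnsim\ig$. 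Hence $hg\sim g$.

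For (ii) the point is that, once $\ih\precnsim\{g,\ig\}$ is known, every stated equivalence is just (i) used twice: applying (i) with base $g$ to the elements $h$ and $\ih$ gives $gh\sim g$ and $g\ih\sim g$, while applying (i) with $g$ replaced by $\ig$ (its hypothesis then requiring the element to be $\,\cdot\,\precnsim g$) to $h$ and $\ih$ gives $h\ig\sim\ig$ and $\ih\ig\sim\ig$. So the real content is $h\precnsim\{g,\ig\}\Rightarrow\ih\precnsim\{g,\ig\}$; as this is symmetric in $g\leftrightarrow\ig$, I would only establish $\ih\precnsim g$ and then rerun the argument with $g$ replaced by $\ig$. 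To get $\ih\precnsim g$ I argue by contradiction: if $g\precsim\ih$, then $(CQ_2)$ with $x=g$, $y=\ih$ gives $gh\precsim h$, whereas (i) already gives $gh\sim g$; together these force $g\precsim h$, contradicting $h\precnsim g$. I expect this contradiction to be the main obstacle of the whole lemma, because inversion is not order-preserving, so $\ih\precnsim g$ cannot be read off from $h\precnsim g$ and genuinely requires the multiplicative input $gh\sim g$ coming from (i).

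For (iii) the second chain $\ig\sim\ig\ih\sim\ig h\sim\ih\ig\sim h\ig$ comes out directly by applying (i) with $g$ replaced by $\ig$ to $h$ and $\ih$, both of which satisfy $\,\cdot\,\precnsim g$ since $h,\ih\precsim\ig\precnsim g$. For the first chain it suffices, by Lemma \ref{commuteeqLem} and the symmetry $h\leftrightarrow\ih$ of the hypothesis, to prove the single relation $gh\sim g$. I would get $gh\precsim g$ from $(CQ_2)$ followed by $(CQ_3)$, using that $g$ commutes with $\ig$ so that $(\ig)^{g}=\ig$; this reduces $gh\precsim g$ to the hypothesis $h\precsim\ig$. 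For $g\precsim gh$ I again argue by contradiction: if $gh\precnsim g$, then $(CQ_2')$ gives $\ih\ig\precnsim g\ih\ig$, while $\ih\ig\sim\ig$ from the second chain and $g\ih\ig=(\ih)^{g}\precsim(\ig)^{g}=\ig$ by $(CQ_3)$ and $\ih\precsim\ig$; combining these forces $\ig\precnsim\ig$, a contradiction. The delicate point in (iii) is that its hypotheses are non-strict, so (i) cannot be applied with base $g$ directly; the contradiction argument is tailored to survive the borderline case $h\sim\ig$.
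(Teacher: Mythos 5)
Your proof is correct and takes essentially the same route as the paper's: part (i) is the paper's argument verbatim, and parts (ii) and (iii) use the same ingredients (bootstrapping from (i), Lemma \ref{commuteeqLem}, the $h\leftrightarrow h^{-1}$ and $g\leftrightarrow g^{-1}$ symmetries, and $(CQ_2)$/$(CQ_3)$). The only difference is that where the paper applies $(CQ_2')$ directly to get, e.g., $\ih\precnsim g$ from $h\precnsim gh$, you argue by contradiction via $(CQ_2)$ — but these are contrapositives of one another, so the content is identical.
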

\vspace{5mm}
\begin{proof}
\begin{enumerate}[(i)]
 \item By $(CQ_2)$, $h\precsim \ig\Rightarrow hg\precsim g$. 
 By $(CQ_2')$, $h\precnsim \ig\Rightarrow \ih\precnsim \ig\ih$. 
 By $(CQ_2)$, $\ih\precsim \ig\ih\Rightarrow g\precsim hg$, hence  
  $g\sim hg$.
  \item By  (i), $g\sim gh$ and $\ig\sim \ig h$.  
  By $(CQ_2')$, $h\precnsim gh\Rightarrow \ih\precnsim g$
    and $h\precnsim \ig h\Rightarrow \ih\precnsim \ig$.
 In particular, $\ih$ satisfies
 $\ih\precnsim \{g,\ig\}$, so we get $g\sim g\ih$ and $\ig\sim \ig\ih$, hence the claim.
 \item By (i), $\{h,\ih\}\precnsim g$ implies $\ig\sim \ig\ih\sim \ig h$. 
 By $(CQ_2)$, $h\precsim \ig\Rightarrow hg\precsim g$ and $\ih\precsim \ig\ih\Rightarrow g\precsim hg$, 
 hence $g\sim hg$. Analogously, $g\sim g\ih$. The rest follows from Lemma \ref{commuteeqLem}.
\end{enumerate} 
\end{proof}

We can summarize these results in the following proposition:

\begin{Prop}\label{toutesequivalences}
 Assume $g$ is v-type. If $h\precnsim g$, then $\ih\precnsim g$ and we have 

 $hg\sim\ih g\sim g\ih\sim gh\sim g\sim \ig\sim \ig h\sim \ig\ih\sim\ih\ig\sim h\ig$.
 
 Assume $g$ is $o^+$-type. If $\{h,\ih\}\precsim \ig$, then we have

 $\ig\ih\sim \ig h\sim \ig\precnsim g\sim gh\sim g\ih\sim hg\sim\ih g$.
\end{Prop}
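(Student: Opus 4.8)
The plan is to derive both statements almost directly from Lemma \ref{lemfond}, using only the definitions of v-type and $o^+$-type together with Lemma \ref{commuteeqLem} to recover the products in which the order of the two factors is reversed.

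For the first statement, I would assume $g$ is v-type, so that $g\sim\ig$, and suppose $h\precnsim g$. Since $g\sim\ig$, the hypothesis immediately upgrades to $h\precnsim\{\ig,g\}$ (from $h\precnsim g\sim\ig$ one gets $h\precsim\ig$, and $h\sim\ig$ would force $h\sim g$, contradicting $h\precnsim g$). This is exactly the hypothesis of Lemma \ref{lemfond}(ii), which then yields $\ih\precnsim\{g,\ig\}$ --- in particular the asserted $\ih\precnsim g$ --- together with $gh\sim g\sim g\ih$ and $\ig\sim h\ig\sim\ih\ig$. To obtain the four remaining products I would invoke Lemma \ref{commuteeqLem}: from $gh\sim g$ and $g\ih\sim g$ it gives $hg\sim g$ and $\ih g\sim g$, and reading the same lemma with $\ig$ playing the role of $g$, from $h\ig\sim\ig$ and $\ih\ig\sim\ig$ it gives $\ig h\sim\ig$ and $\ig\ih\sim\ig$. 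Finally, the relation $g\sim\ig$ coming from v-type splices the five terms equivalent to $g$ with the five equivalent to $\ig$ into the single chain claimed.

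For the second statement, I would assume $g$ is $o^+$-type, so that $\ig\precnsim g$ holds by definition, and suppose $\{h,\ih\}\precsim\ig$. Combining these gives $\{h,\ih\}\precsim\ig\precnsim g$, which is precisely the hypothesis of Lemma \ref{lemfond}(iii). That lemma directly supplies $g\sim gh\sim g\ih\sim hg\sim\ih g$ and $\ig\sim\ig\ih\sim\ig h$, while the strict inequality $\ig\precnsim g$ separating the two groups is nothing but the $o^+$-type hypothesis; together these give the displayed chain.

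There is no substantial obstacle here, since the proposition is essentially a repackaging of Lemma \ref{lemfond}. The only point requiring care is the bookkeeping: making sure every listed term is accounted for, and ensuring that each application of Lemma \ref{commuteeqLem} is performed with the correct base element ($g$ or $\ig$). In the v-type case one must also remember that it is precisely the relation $g\sim\ig$ that merges the two halves of the chain, whereas in the $o^+$-type case the corresponding relation is a strict inequality, which is why the two chains look formally different even though both are immediate corollaries of the same lemma.
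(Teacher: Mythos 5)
Your proof is correct and takes essentially the same approach as the paper: the paper states this proposition as a direct summary of Lemma \ref{lemfond} (with no separate proof given), and your write-up simply fills in the bookkeeping — upgrading $h\precnsim g$ to $h\precnsim\{g,\ig\}$ via $g\sim\ig$ in the v-type case, applying parts (ii) and (iii) of Lemma \ref{lemfond}, and using Lemma \ref{commuteeqLem} for the reversed products. No gaps.
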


 We now want to find an analog of axiom $(Q_2)$ of compatible q.o's (see \cite{Lehericy}).
\begin{Lem}
 If $f\precsim g$ and $\ig\precsim \ih\ig$, then $fh\precsim gh$ and $hf\precsim hg$.
\end{Lem}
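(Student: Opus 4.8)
The plan is to treat the two conclusions separately, in each case rewriting the target inequality by the appropriate version of $(CQ_2)$ so that it reduces to something obtainable from the two hypotheses by transitivity.

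For $fh\precsim gh$ the argument is essentially immediate. Applying $(CQ_2)$ with $x=fh$ and $y=gh$ turns the goal into $(fh)(gh)^{-1}\precsim (gh)^{-1}$, that is, $f\ig\precsim\ih\ig$, since $(fh)(gh)^{-1}=f\ig$ and $(gh)^{-1}=\ih\ig$. Now from $f\precsim g$ axiom $(CQ_2)$ gives $f\ig\precsim\ig$, while the second hypothesis is exactly $\ig\precsim\ih\ig$; chaining these by transitivity yields $f\ig\precsim\ih\ig$, hence $fh\precsim gh$.

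For $hf\precsim hg$ I would first record the left-hand analogue of $(CQ_2)$, namely $x\precsim y\Leftrightarrow\iy x\precsim\iy$. This follows by conjugating $x\precsim y$ by $\iy$ through $(CQ_3)$ (using $y^{\iy}=y$ and $x^{\iy}=\iy x y$) and then applying $(CQ_2)$, so it is a genuine equivalence. Using this left version with $x=hf$ and $y=hg$ reduces the goal to $(hg)^{-1}(hf)\precsim (hg)^{-1}$, i.e. $\ig f\precsim\ig\ih$. From $f\precsim g$ the same left version gives $\ig f\precsim\ig$, so it now suffices to prove $\ig\precsim\ig\ih$.

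The one genuinely nonobvious step — and the only place where the precise shape $\ig\precsim\ih\ig$ of the hypothesis, rather than $\ig\precsim\ig\ih$, matters — is to pass between these two. Here I would conjugate the hypothesis $\ig\precsim\ih\ig$ by $\ig$ via $(CQ_3)$: a direct computation gives $(\ig)^{\ig}=\ig$ and $(\ih\ig)^{\ig}=\ig(\ih\ig)g=\ig\ih$, so conjugation converts $\ig\precsim\ih\ig$ into $\ig\precsim\ig\ih$, exactly what is needed. Combining with $\ig f\precsim\ig$ by transitivity gives $\ig f\precsim\ig\ih$, and the left version of $(CQ_2)$ translates this back to $hf\precsim hg$. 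I expect the main obstacle to be spotting this conjugation identity; once it is in hand, everything else is routine bookkeeping with $(CQ_2)$, its left analogue, and transitivity, and totality is not needed.
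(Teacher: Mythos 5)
Your proof is correct. For the first conclusion ($fh\precsim gh$) you follow exactly the paper's route: reduce the goal via $(CQ_2)$ to $f\ig\precsim\ih\ig$ and obtain it by chaining $f\ig\precsim\ig$ (from $f\precsim g$ and $(CQ_2)$) with the hypothesis. For the second conclusion the paper is much quicker: it simply observes that $hf=(fh)^h$ and $hg=(gh)^h$, so a single application of $(CQ_3)$ to the already-established inequality $fh\precsim gh$ yields $hf\precsim hg$ immediately. You instead re-run the whole argument on the left, first deriving the left-handed analogue $x\precsim y\Leftrightarrow\iy x\precsim\iy$ (which the paper also records in the remark following the axiomatization) and then transporting the hypothesis to $\ig\precsim\ig\ih$ by conjugating by $\ig$. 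All your conjugation computations check out against the convention $g^z=zg\iz$, so nothing is wrong; the only thing you missed is the one-line shortcut that conjugation by $h$ swaps the order of the factors, which makes the second claim an instant corollary of the first.
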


\begin{proof}
 By $(CQ_2)$, $f\precsim g$ implies  $f\ig\precsim \ig$. By assumption, this implies
 $f\ig\precsim \ih\ig$. By $(CQ_2)$ again, this implies $fh\precsim gh$. $(CQ_3)$ then implies $hf\precsim hg$.
\end{proof}

\begin{Prop}\label{addition}
 Let $f,g\in G$ such that $f\precsim g$ and assume that either $g\nsim\ih$ or $\{h,\ih\}\precsim g\precnsim \ig$ holds.
 Then we have $fh\precsim gh$
 and $hf\precsim hg$.
\end{Prop}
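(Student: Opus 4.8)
The plan is to reduce everything to the preceding Lemma, which already gives $fh\precsim gh$ and $hf\precsim hg$ as soon as $f\precsim g$ and $\ig\precsim \ih\ig$ both hold. Since $f\precsim g$ is part of the hypotheses, it suffices to prove that each of the two alternative assumptions on $g$ and $h$ forces the inequality $\ig\precsim \ih\ig$; once this is established, nothing more is needed and the preceding Lemma closes the argument directly.

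First I would handle the case $g\nsim \ih$. By totality of $\precsim$ there are two subcases. If $\ih\precnsim g$, I apply Lemma \ref{lemfond}(i) with $h$ and $g$ replaced by $\ih$ and $\ig$ respectively: the required hypothesis reads $\ih\precnsim (\ig)^{-1}=g$, and the conclusion yields $\ig\sim \ih\ig$, in particular $\ig\precsim \ih\ig$. If instead $g\precnsim \ih$, then $(CQ_2')$ applied with $x=\ih$ and $y=g$ gives directly $\ig\precnsim \ih\ig$, hence again $\ig\precsim \ih\ig$.

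Next I would handle the case $\{h,\ih\}\precsim g\precnsim \ig$, in which $g$ is $o^-$-type, so Lemma \ref{lemfond}(iii) does not apply to $g$ itself. The trick is to apply Lemma \ref{lemfond}(iii) with $g$ replaced by $\ig$ (keeping $h$): its hypothesis then becomes $\{h,\ih\}\precsim (\ig)^{-1}=g\precnsim \ig$, which is exactly our assumption, and the corresponding conclusion contains the chain $\ig\sim \ig h\sim \ig\ih\sim h\ig\sim \ih\ig$; in particular $\ig\sim \ih\ig$, so $\ig\precsim \ih\ig$.

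In both cases we have verified $\ig\precsim \ih\ig$, and so the preceding Lemma delivers the conclusion. The only point requiring care is the bookkeeping of the substitution $g\mapsto \ig$ in Lemma \ref{lemfond}: applying parts (i) and (iii) to $\ig$ rather than to $g$ is what converts the $o^-$-type situations into the $o^+$-type situations those parts are stated for. This substitution also explains the role of the second alternative: in the borderline situation $g\sim \ih$ the first alternative is unavailable, and it is precisely part (iii), whose hypothesis $\{h,\ih\}\precsim \ig$ is non-strict, that covers it.
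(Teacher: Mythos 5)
Your proof is correct and follows essentially the same route as the paper: reduce both alternatives to the hypothesis $\ig\precsim\ih\ig$ of the preceding lemma, using Lemma \ref{lemfond}(i) and $(CQ_2')$ for the case $g\nsim\ih$, and the $o^+$-type statement applied to $\ig$ for the case $\{h,\ih\}\precsim g\precnsim\ig$ (the paper cites Proposition \ref{toutesequivalences} there, which is just the summary of Lemma \ref{lemfond}(iii) that you invoke directly). Your explicit bookkeeping of the substitution $g\mapsto\ig$ is a welcome clarification of a step the paper leaves implicit.
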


\begin{proof}

 If $\ih\precnsim g$, then  by \ref{lemfond} we have $\ig\sim \ih\ig$.
 If $g\precnsim \ih$, then $(CQ_2')$ implies $\ig\precnsim \ih\ig$. In both cases, we have 
 $\ig\precsim \ig\ih$, so we can apply the previous lemma. 
 For the second claim, we use \ref{toutesequivalences} to get $\ig\sim \ih\ig$.
\end{proof}

\begin{Rem}
 We just showed that C-q.o.g's satisfy the formula:
 $\forall g,h,f\in G, f\precsim g\nsim\ih\Rightarrow fh\precsim gh$.
  This formula is very similar to axiom $(Q_2)$ of compatible q.o's and seems to be 
 more practical to deal with than axiom $(CQ_2)$ of C-q.o's. However, we don't know if we can actually replace 
 $(CQ_2)$ by this formula in our axiomatization of C-q.o's.
\end{Rem}

\subsection{Quotients}\label{quotientsection}

 In the theory of ordered abelian groups there is a classical notion of the order induced on a quotient $G/H$ where 
 $H$ is a normal convex subgroup of $G$. In \cite{Lehericy}, we showed that the same holds for compatible quasi-ordered abelian groups.
 Here we show a similar result for C-q.o.g's. However, because of the occasional occurrence of welding, 
     it won't be sufficient for us to only consider convex subgroups, so we will show that a C-q.o $\precsim$ on $G$ canonically induces a C-q.o on the quotient group 
   $G/H$ if $H$ is a normal strictly convex subgroup of $G$.
  This will be useful to describe the structure of the C-q.o on $G$.
 Note first that thanks to axiom $(CQ_1)$ any convex subgroup of $G$ is an initial segment. This also means that any 
 non-convex strictly convex subgroup of $G$ is in case (iii) of Lemma \ref{strictconvexity}. 
 
  \begin{Prop}\label{strictconvexquotientProp}
    Let $H$ be a strictly convex normal subgroup of $G$. Then $\precsim$ induces a C-q.o 
    on $G/H$ by the formula:
    $gH\precsim hH\Leftrightarrow (g\in H)\vee(h\notin H\wedge g\precsim h)$.
    
   \end{Prop}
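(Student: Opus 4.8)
The plan is to verify the statement in two stages: first, that the given formula actually defines a quasi-order on $G/H$ via Lemma \ref{quotient}; second, that this induced quasi-order satisfies the three axioms $(CQ_1)$, $(CQ_2)$, $(CQ_3)$, so that by the Axiomatization Proposition it is a genuine C-q.o. For the first stage, I would check that $H$ meets the hypothesis of Lemma \ref{quotient}, namely that for $g_1,g_2\in G$ with $g_1\ig_2\notin H$ and $g_1\precsim g_2$, we have $g_1h_1\precsim g_2h_2$ and $h_1g_1\precsim h_2g_2$ for all $h_1,h_2\in H$. This is where I expect the real work to lie, and I would lean on Proposition \ref{addition} together with strict convexity of $H$: since $g_1\ig_2\notin H$ and $H$ is an initial segment (by the remark that convex subgroups are initial segments, extended to the strictly convex case via Lemma \ref{strictconvexity}(iii)), the elements $h_1,h_2$ are small compared to $g_1,g_2$, which should let me absorb them into the $\precsim$-comparison. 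The formula in Proposition \ref{strictconvexquotientProp} then matches the formula in Lemma \ref{quotient} once one observes that $gH\precsim hH$ in the sense $g\ih\in H\vee(g\ih\notin H\wedge g\precsim h)$ is equivalent to $(g\in H)\vee(h\notin H\wedge g\precsim h)$, using that $H$ is a subgroup and an initial segment.

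For the second stage I would verify the three C-q.o axioms on $G/H$. Axiom $(CQ_1)$ asks that $H\precnsim gH$ for every $gH\neq H$, i.e.\ every nontrivial coset is strictly above the identity coset; this follows because $g\notin H$ gives $H\precsim gH$ by the formula, and strictness holds since $gH=H$ is false. Axiom $(CQ_3)$, conjugation-invariance, should transfer directly from $(CQ_3)$ on $G$ because conjugation by $z$ sends $H$ to $H$ (as $H$ is normal) and preserves $\precsim$; one just checks the defining formula is respected under $gH\mapsto (gH)^{zH}=g^zH$. Axiom $(CQ_2)$, that $gH\precsim hH\Leftrightarrow (gH)(hH)^{-1}\precsim (hH)^{-1}$, is the most delicate: I would split into the case $g\in H$ (where both sides reduce to statements about cosets in $H$, trivially true) and the case $g,h\notin H$ with $g\precsim h$, where I apply $(CQ_2)$ on $G$ to get $g\ih\precsim\ih$ and then must check this survives passage to the quotient, i.e.\ that $g\ih\in H\Leftrightarrow$ the quotient inequality holds trivially, and otherwise that $g\ih\precsim\ih$ lifts correctly.

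The main obstacle will be the case analysis in $(CQ_2)$ when $\ih\in H$ but $g\notin H$, or when $g\ih$ lands in $H$ while neither $g$ nor $h$ does; here the interplay between the subgroup structure of $H$, its strict convexity, and the behavior of $\precsim$ under multiplication must be handled carefully, and I expect to invoke both Proposition \ref{addition} and the characterization of strict convexity in Lemma \ref{strictconvexity} to rule out the problematic configurations. A useful preliminary observation is that, because $H$ is strictly convex and an initial segment, if $g\notin H$ then $H\precnsim g$ except possibly on a single equivalence class at the top of $H$ (the case (iii) boundary), and this boundary class is exactly where welding could interfere; I would argue that the coset formula is insensitive to this ambiguity precisely because elements of a common class differ by an element of $cl(M)$, which the quotient collapses. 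Once these boundary cases are dispatched, the verification of all three axioms is routine, and the conclusion that $\precsim$ induces a C-q.o on $G/H$ follows from the Axiomatization Proposition.
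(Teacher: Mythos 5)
Your overall two-stage plan (get a quasi-order via Lemma \ref{quotient}, then verify $(CQ_1)$--$(CQ_3)$) is exactly the shape of the paper's argument for \emph{convex} $H$ (Proposition \ref{quotientconvex}), but your first stage has a genuine gap: when $H$ is strictly convex without being convex, the hypothesis of Lemma \ref{quotient} simply fails, so there is nothing to ``check'' --- the paper states this explicitly just before Lemma \ref{refinement}. Concretely, in Example \ref{examplesfondamentales}(b) take $H=\{0\}\times\Z$, $g_1=(-1,0)$, $g_2=(0,1)$, $h_1=(0,0)$, $h_2=(0,-1)$. Then $g_1\ig_2=(-1,-1)\notin H$ and $g_1\precsim g_2$ (they are equivalent), yet $g_2h_2=(0,0)\precnsim(-1,0)=g_1h_1$, so the required conclusion $g_1h_1\precsim g_2h_2$ is false. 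The root cause also undermines your preliminary observation: a strictly convex subgroup need \emph{not} be an initial segment (here $(-1,0)\precsim(0,1)\in H$ but $(-1,0)\notin H$), so $h_1,h_2\in H$ are not ``small compared to $g_1,g_2$'' when one of the $g_i$ lies in the class of $\max(H)$; and the proposed repair --- that equivalent elements ``differ by an element of $cl(M)$, which the quotient collapses'' --- is not true, since $g_1\ig_2$ need not lie in $H$ at all, as the example shows.

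The paper's actual device is to first \emph{refine} $\precsim$ to a q.o $\precsim_2$ by declaring $H\precnsim_2 F$, where $F$ is the convexity complement of $H$ (Lemma \ref{refinement}); one checks that $\precsim_2$ is still a C-q.o and that $H$ is $\precsim_2$-convex, applies the convex case to $(G,\precsim_2)$, and then observes that the resulting coset formula is unchanged because $g\precsim h\Leftrightarrow g\precsim_2 h$ whenever $h\notin H$. Some device of this kind (or a direct verification of the coset formula's transitivity that bypasses Lemma \ref{quotient} entirely) is indispensable; as written, your stage one cannot be carried out. Your stage two is essentially the verification the paper performs inside Proposition \ref{quotientconvex} and would be fine once a correct stage one is in place.
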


   The proof of Proposition \ref{strictconvexquotientProp} is done in three parts. We first show the case where $H$ is convex:

 \begin{Prop}\label{quotientconvex}
   Let $(G,\precsim)$ be a C-q.o.g and $H$ a convex normal subgroup of $G$. Then 
  $\precsim$ induces a C-q.o on $G/H$ 
   given by the formula:   
   $gH\precsim hH\Leftrightarrow (g\in H)\vee(h\notin H\wedge g\precsim h)$.   
  \end{Prop}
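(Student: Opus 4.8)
The plan is to verify that the formula defines a C-q.o by checking the three axioms $(CQ_1)$, $(CQ_2)$, $(CQ_3)$ from the axiomatization of C-q.o's, after first confirming that $\precsim$ genuinely descends to a well-defined quasi-order on $G/H$. For the latter, since $H$ is normal and convex, I would first observe that $H$ is an initial segment (this was noted just before the statement, using $(CQ_1)$), so $H\precnsim (G\backslash H)$. The natural strategy is to invoke Lemma \ref{quotient}: I must check the hypothesis that for all $g_1,g_2\in G$ with $g_1\ig_2\notin H$ and $g_1\precsim g_2$, we have $g_1h_1\precsim g_2h_2$ and $h_1g_1\precsim h_2g_2$ for all $h_1,h_2\in H$. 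This is where Proposition \ref{addition} enters: since $H$ is an initial segment and $g_1\ig_2\notin H$, the elements $h_1,h_2$ are ``small'' relative to $g_1,g_2$, so applying \ref{addition} twice (first to absorb $h_2$ into $g_2$, then to compare) should yield the desired inequalities. Once Lemma \ref{quotient} applies, the induced relation is automatically a total q.o and its defining formula matches the one in the statement (noting that for $g,h\notin H$, $g\ih\in H$ together with convexity forces $g\sim h$, so the two formulas agree).

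Next I would check the three C-q.o axioms for the induced q.o on $G/H$. For $(CQ_1)$, I need $H\precnsim gH$ for every $gH\neq H$, i.e.\ for $g\notin H$; but $H$ being an initial segment containing $1$ gives exactly $1\precnsim g$ hence $H\precsim gH$ with strict inequality since $g\notin H$. For $(CQ_2)$, I want $gH\precsim hH\Rightarrow (g\ih)H\precsim \ih H$; unwinding the quotient formula, this reduces to the corresponding statement $g\precsim h\Rightarrow g\ih\precsim\ih$ upstairs (which is $(CQ_2)$ for $\precsim$ on $G$), together with bookkeeping about which elements lie in $H$. The key compatibility fact I would use is that $H$ is normal, so the coset arithmetic $(g\ih)H=gH(hH)^{-1}$ is legitimate, and that the conditions ``$\in H$ vs.\ $\notin H$'' are preserved under the operations because $H$ is a subgroup. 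Similarly $(CQ_3)$, namely $gH\precsim hH\Leftrightarrow g^zH\precsim h^zH$, follows from $(CQ_3)$ for $\precsim$ together with the fact that conjugation by $z$ permutes $H$ (normality) and hence preserves membership in $H$.

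The main obstacle I anticipate is the careful case analysis needed to translate the upstairs axioms into the quotient formula, because the defining formula for $\precsim$ on $G/H$ branches on whether representatives lie in $H$. In each axiom I must separately handle the cases where the relevant cosets are trivial (equal to $H$) or non-trivial, and confirm that the formula's disjunctions behave correctly — for instance, in $(CQ_2)$ I must ensure that when $g\ih\notin H$ the inequality $g\ih\precsim\ih$ transfers, and when $g\ih\in H$ the left side is automatically $\precsim$-minimal so the implication is vacuous. Normality of $H$ is what keeps all these coset manipulations consistent, and the initial-segment property is what makes the ``small elements'' arguments via Proposition \ref{addition} go through. I expect that once the Lemma \ref{quotient} hypothesis is verified (the genuinely computational step), the three axioms follow by routine but careful translation, with no deep new idea required beyond the relations already established in Section \ref{relationsection}.
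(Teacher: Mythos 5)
Your proposal follows essentially the same route as the paper: verify the hypothesis of Lemma \ref{quotient} using the fact that elements of $H$ are small, check that the resulting formula is equivalent to the stated one, and then verify $(CQ_1)$--$(CQ_3)$ by a case split on membership in $H$; the paper does exactly this, using Lemma \ref{lemfond}(iii) where you invoke Proposition \ref{addition} (which is derived from it). The one imprecision is your blanket claim that $g_1\ig_2\notin H$ makes $h_1,h_2$ ``small'' relative to \emph{both} $g_1$ and $g_2$: this fails when $g_1\in H$ (and then $g_2\notin H$), a case the paper treats separately by observing that $g_1h_1,h_1g_1\in H$ while $g_2h_2,h_2g_2\notin H$, so the initial-segment property gives the inequality directly. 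Your mechanism still survives there because only smallness relative to $g_2$ is actually needed for the chain $g_1h_1\precsim g_2h_1\sim g_2\sim g_2h_2$, but the case should be acknowledged explicitly; with that adjustment the argument is complete and matches the paper's.
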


  \begin{proof}
   We apply Lemma \ref{quotient}. 
   Let $g_1,g_2\in G$ with $g_1\precsim g_2$ and $g_1\ig_2\notin H$ and let $h_1,h_2\in H$. We want to show that 
$g_1h_1\precsim g_2h_2\wedge h_1g_1\precsim h_2g_2$ holds. 
   If $g_1\in H$, then $g_2\notin H$ and we have 
   $h_1g_1, g_1h_1\in H$ and  $h_2g_2, g_2h_2\notin H$. By convexity of $H$, this implies 
   $g_1h_1\precsim g_2h_2\wedge h_1g_1\precsim h_2g_2$. Now assume $g_1\notin H$. By convexity of $H$, this implies $g_2\notin H$. 
 By convexity of $H$, we have $\{h_1,h_2\}\precnsim \{g_1,g_2,\ig_1,\ig_2\}$. By Lemma \ref{lemfond}(iii), this implies
  $h_1g_1\sim g_1h_1\sim g_1\precsim g_2\sim g_2h_2\sim h_2g_2$.
 This proves that the assumption of Lemma \ref{quotient} is verified, so $\precsim$ induces a q.o on $G/H$
 by the formula 
 $gH\precsim hH\Leftrightarrow g\ih\in H\vee (g\ih\notin H\wedge g\precsim h)$. We now want to show that  
 this is equivalent to $ (g\in H)\vee(h\notin H\wedge g\precsim h)$.
 Assume $gH\precsim hH$ and $g\notin H$. 
 If $h\precnsim g$, then by Lemma \ref{lemfond} we have $\ig\sim h\ig$. This  implies $h\ig\notin H$ and 
 $h\precnsim g$, which contradicts the assumption. Thus, $g\precsim h$. Since $g\notin H$, this implies $h\notin H$,
 so $h\notin H\wedge g\precsim h$ holds. 
 Conversely, assume $(g\in H)\vee(h\notin H\wedge g\precsim h)$. 
 If $g\notin H$, then $g\precsim h$, which implies $gH\precsim hH$. If $g\in H$, then either 
 $h\in H$, in which case $g\ih\in H$, or $h\notin H$, which implies $g\ih\notin H\wedge g\precsim h$ by convexity of $H$. 
 In both cases, we have 
 $gH\precsim hH$. 
 Now we can show that the induced q.o is a C-q.o.
 For $(CQ_1)$: If $g\notin H$ and $h\in H$, then by convexity of
 $H$ we have $h\precnsim g$, so $1=hH\precnsim gH$. Now let us prove
 $(CQ_2)\wedge(CQ_3)$. Assume $gH\precsim hH$. If $g\ih\in H$, then by $(CQ_1)$ we have 
 $g\ih H\precsim \ih H$. Since $H$ is normal, we also have 
 $(g\ih)^z\in H$, hence $g^z H\precsim h^z H$.
 If $g\ih\notin H$, then $g\precsim h$, which implies $g\ih\precsim \ih$ and $g^z\precsim h^z$. This implies
 $g\ih H\precsim \ih H$  and $g^z H\precsim h^z H$
  \end{proof}

      If $H$ is only strictly convex, then the assumption of Lemma \ref{quotient} is in general not verified,
     which is why we need the following lemma:
     
\begin{Lem}\label{refinement}
      Let $(G,\precsim_1)$ be a C-q.o.g and 
      let $H$ be a strictly convex normal subgroup of $(G,\precsim_1)$ with convexity complement $F\neq\varnothing$.
      We are then in case (iii) of Lemma \ref{strictconvexity}, so we have $H\precsim F$.
      Let $\precsim_2$ be the refinement of $\precsim_1$
      defined by  declaring that $H\precnsim_2 F$. Then $\precsim_2$ is a C-q.o and
      $H$ is $\precsim_2$-convex.
     \end{Lem}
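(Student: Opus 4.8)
The plan is to make the definition of $\precsim_2$ fully explicit and then verify the three axioms $(CQ_1)$, $(CQ_2)$, $(CQ_3)$ for $\precsim_2$, together with the $\precsim_2$-convexity of $H$. By hypothesis we are in case (iii) of Lemma~\ref{strictconvexity}: fixing $M\in\max(H)$ we have $F=cl(M)\backslash H$, and one checks $\max(H)=H\cap cl(M)$ (every maximal element of $H$ is $\sim_1 M$), while $H\precsim_1 F$. Since $H\precsim_1 F$ already holds, declaring $H\precnsim_2 F$ only refines $\precsim_1$ inside the single class $cl(M)$: concretely, $\precsim_2$ agrees with $\precsim_1$ on every pair, except that the class $cl(M)$ is split into the two $\precsim_2$-classes $H\cap cl(M)\precnsim_2 F$. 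Thus $\precsim_2$ is a total q.o refining $\precsim_1$, and the only pairs on which $\precsim_1$ fails to be contained in $\precsim_2$ are the ``bad pairs'' $a\precsim_1 b$ with $a\in F$ and $b\in H\cap cl(M)$.

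First I would record two cheap facts. Since $\precsim_2$ refines a total q.o, every strict inequality survives: from $a\precnsim_1 b$ we get $\neg(b\precsim_2 a)$ by refinement, hence $a\precnsim_2 b$ by totality of $\precsim_2$. This yields $(CQ_1)$ immediately, because $1\in H$ is the global minimum (by $(CQ_1)$ for $\precsim_1$) and $1\notin cl(M)$ (as $M\neq1$ whenever $F\neq\varnothing$), so $1\precnsim_1 x\Rightarrow 1\precnsim_2 x$ for $x\neq1$. For $(CQ_2)$ and $(CQ_3)$ it suffices to prove the ``$\Rightarrow$'' directions. The uniform strategy is: assuming $x\precsim_2 y$ we get $x\precsim_1 y$, apply the corresponding axiom of $\precsim_1$ to obtain $x\iy\precsim_1\iy$ (resp.\ $x^z\precsim_1 y^z$), and then upgrade back to $\precsim_2$. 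The upgrade can fail only on a bad pair, so the whole argument reduces to excluding bad pairs.

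The heart of the matter is excluding the bad pair for $(CQ_2)$. A failure would mean $\iy\in H\cap cl(M)$ and $x\iy\in F$. Since $H$ is a subgroup, $\iy\in H$ forces $y\in H$, and $x\iy\notin H$ then forces $x\notin H$. Because $H\cup cl(M)$ is convex and contains the global minimum $1$, it is an initial segment, namely $\{g\mid g\precsim_1 M\}$; as $x\precsim_1 y\precsim_1 M$ this places $x\in H\cup cl(M)$, whence $x\in cl(M)\backslash H=F$. Then $M\sim_1 x\precsim_1 y\precsim_1 M$ gives $y\sim_1 M$, so $y\in H\cap cl(M)$; but $x\in F$ and $y\in H\cap cl(M)$ yield $y\precnsim_2 x$ by definition of $\precsim_2$, contradicting $x\precsim_2 y$. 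Hence no bad pair occurs and $x\iy\precsim_2\iy$. For $(CQ_3)$ I would instead exploit normality: by $(CQ_3)$ for $\precsim_1$ conjugation by $z$ is a $\precsim_1$-automorphism, and being normal it fixes $H$ setwise, hence fixes $\max(H)=H\cap cl(M)$, the class $cl(M)$, and $F$ setwise; therefore it is also a $\precsim_2$-automorphism, so a bad pair for $(x^z,y^z)$ would pull back to a bad pair for $(x,y)$, again contradicting $x\precsim_2 y$.

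Finally, the $\precsim_2$-convexity of $H$ is routine. Given $s,t\in H$ with $s\precsim_2 a\precsim_2 t$, refinement yields $s\precsim_1 a\precsim_1 t\precsim_1 M$. If $a\notin cl(M)$, then either $a=1\in H$, or $1\precnsim_1 a\precnsim_1 M$ with $1,M\in H$, and strict convexity of $H$ gives $a\in H$. If $a\in cl(M)$, then $a\precsim_2 t$ forces $t\in cl(M)$ (otherwise $t\precnsim_1 M\sim_1 a$ would give $t\precnsim_2 a$) and then $a\in H\cap cl(M)$ (otherwise $a\in F$ would give $t\precnsim_2 a$); in either case $a\in H$. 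I expect the bad-pair elimination for $(CQ_2)$ to be the only genuinely delicate point, the rest being bookkeeping around the splitting of the single class $cl(M)$.
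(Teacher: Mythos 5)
Your proof is correct and follows essentially the same strategy as the paper: reduce each axiom to excluding the single "bad pair" configuration ($\iy\in\max(H)$ with $x\iy\in F$) and handle $(CQ_3)$ via normality of $H$. The only cosmetic difference is that you exclude the bad case by locating $x\in F$ and deriving $y\precnsim_2 x$, whereas the paper derives $x\in H$ hence $x\iy\in H$, contradicting $x\iy\in F$; you also spell out the convexity of $H$ and the totality of $\precsim_2$, which the paper declares clear.
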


     \begin{proof}
      The fact that $H$ is $\precsim_2$-convex is clear, as is the fact that 
      $1\precnsim_2 x$ for every $x\in G$.      
      Since $F\neq\varnothing$, $\max(H)$ is non-empty.
      Note that the notation $\max(H)$ is unambiguous, since the max of $H$ in $(G,\precsim_1)$ is the same as in
      $(G,\precsim_2)$.      
      Now assume $x\precsim_2 y$. Since $\precsim_1$ is a coarsening of $\precsim_2$, we have 
      $x\precsim_1 y$. This implies $x\iy\precsim_1 \iy$ and $x^z\precsim_1 y^z$. The only way that we could have 
      $\iy\precnsim_2 x\iy$ is if $\iy\in \max(H)$ and $x\iy\in F$. However, if $\iy\in H$, then $y\in H$. Since we have
      $x\precsim_2 y$, $y\in H$ also implies $x\in H$. This means $x\iy\in H$, so $x\iy\notin F$. It follows that 
      $x\iy\precsim_2 \iy$. By the same reasoning (using the fact that $H$ is normal), we get $x^z\precsim_2 y^z$.
     \end{proof}
     
   We can now show Proposition \ref{strictconvexquotientProp}: 

   \begin{proof}[proof of \ref{strictconvexquotientProp}]
    Set $\precsim_1:=\precsim$ and consider the q.o $\precsim_2$ as in Lemma \ref{refinement}. Since $H$ is
    $\precsim_2$-convex, we know that the formula 
    $gH\precsim hH\Leftrightarrow (g\in H)\vee(h\notin H\wedge g\precsim_2 h)$
    gives a well-defined C-q.o.
    It is easy to see that $(g\in H)\vee(h\notin H\wedge g\precsim_2 h)$ is equivalent to 
    $(g\in H)\vee(h\notin H\wedge g\precsim h) $, since for any $h\notin H$ and any $g\in G$,
    $g\precsim h\Leftrightarrow g\precsim_2 h$.
   \end{proof}

\subsection{Type-components}\label{typecomponents}

In this section, we introduce the ``type-components'' $T_g$ mentioned in the introduction. 
For $g\neq1$, we want to find a set $T_g$ which is the biggest strictly convex subset of $G$ containing $g$ 
on which $\precsim$ is elementary-type-like. 
For an o$^+$-type $g\in G$, one can see that the set of $h\in G$, which are o$^+$-type and such that every element strictly
between $g$ and $h$ are also o$^+$-type is the greatest strictly convex subset of o$^+$-type elements containing $g$. We
can define in the same way such a “strictly convex closure” for an o$^-$-type element or a v-type element. Now,
since by definition $T_g$ contains $g$ and $\ig$, in the o-type cases $T_g$ cannot be this closure. We will show that $T_g$
is the union of the strictly convex closures of $g$ and $\ig$. In the v-type case the strictly convex closures of $g$
and $\ig$ are equal. We also introduce the set $G_g$ which should be thought of as the set of elements of $G$ which
are “below” $T_g$ . We then introduce the set $G^g$ which should be thought of as the set of elements which are not
bigger than $T_g$. We will show that $G_g$ and $G^g$ are subgroups.
For proving the properties of $T_g,G^g,G_g$, and the welding properties, it is more convenient to define $T_g$ by
means of formulas with inequalities instead of strict inequalities. This motivates the following definitions.
For an element $1\neq g\in G$, we define the type-component $T_g$ of $g$ as follows:
\begin{itemize}
 \item If $g$ is v-type, then $T_g$ is the set of v-type elements $h\neq 1$ such that there is no 
 $o^+$-type element between $h$ and $g$.
 \item If $g$ is $o^+$-type, then $T_g^+$ is the set of $o^+$-type elements $h$ such that every element between 
 $g$ and $h$ is $o^+$-type. We then set $T_g^-:=(T_g^+)^{-1}$ and $T_g:=T_g^+\bigcup T_g^-$.
 \item If $g$ is $o^-$-type, then $T_g:=T_{\ig}$.
 
\end{itemize}
We  define two sets $G^g$ and $G_g$ as follows:
  
   \begin{itemize}
    \item If $g$ is v-type, then define $G_g:=\{h\mid h\precnsim T_g\}$.
    \item If $g$ is $o^+$-type, then define $G_g:=\{h\mid \{h,\ih\}\precsim \ig\}$.
    \item If $g$ is $o^-$-type, then define $G_g:=G_{\ig}$.
   \end{itemize}
In all cases we set $G^g:=G_g\bigcup T_g$.
 For $g=1$, we set $T_g=G^g=G_g=\{1\}$. We will show later that $G^g$ and $G_g$ are actually subgroups of $G$ (see Propositions  \ref{otypequotient} and \ref{Ggvtypecase}). Note that for any 
$g\in G$, $1\in G_g$, so $G_g$ and $G^g$ are non-empty.

\begin{Ex}\label{componentsexample}
 Let us have a look again at the groups given in Examples \ref{examplesfondamentales}.
 Set $g=(0,1)$ and $h=(1,0)$. In examples (a), (b) and (c) we have 
 $T_g=(\{0\}\times\entiersrel)\backslash \{(0,0)\}$, $T_h^+=\entiers\times\entiersrel$ and 
 $T_h=(\entiersrel\backslash \{0\})\times\entiersrel$. We also have $G_g=\{0\}, G^g=G_h=\{0\}\times\entiersrel, G^h=G$.
 In other words, we have $G^g=G^2,G_g=G_2,G^h=G^1,G_h=G_1$. It is also easy to see that the q.o induced on the quotients
 $G^g/G_g$ and $G^h/G_h$ are exactly the q.o's $\precsim_1$ and $\precsim_2$ which we lifted to construct the q.o on $G$.
 Note that the only difference between cases (a) and (b)
 is that $T_g,T_h, G_h,G^g$ are convex in case (a) but are only strictly convex in case (b) due to welding.
 Note also that in example (b), each element of the form $(x,y)$ with $x<0$ is an o$^-$-type welding point with $(x,y)\sim (0,z)$ for every 
 $z\neq0$. In particular, 
 there is an o$^-$-type element (for example $(-1,0)$) which is contained between $g$ and $\ig$, even though $\ig\in T_g$. This explains why 
 we restrict to o$^+$-type elements in the definition of $T_g$ when $g$ is v-type.
\end{Ex}

    In the next two sections, we describe some properties of the sets $T_g$, $G^g$ and $G_g$ for $g\neq1$.
 As announced in the introduction, we are going to show  that $T_g$ is a maximal subset of $G$ with the properties that $T_g$
   is strictly convex and that $\precsim$ is  elementary-type-like (of the same type as $g$) on $T_g$ (see Propositions \ref{tgotype} and \ref{tgvtype} ). 
We will also show that $G_g$ and $G^g$ are subgroups of $G$ and that $G_g$ is normal in $G^g$. We 
first show these properties for the case where $g$ is o-type and then do the same for the case where $g$ is v-type. 

 \subsubsection{$T_g$ in the o-type case}
   
   We now want to describe $T_g,G_g,G^g$ in the case where $g\neq 1$ is o-type. By definition of $T_g$, we can  
   assume without loss of generality that $g$ is $o^+$-type. The following proposition states the main properties of $T_g$:

 \begin{Prop}[Characteristics of $T_g$]\label{tgotype}
  The set $T_g$ has the following properties: 
  \begin{enumerate}[(a)]
   \item  $T_g$ is 
  right-convex with convexity complement $F_g:=cl(\ig)\backslash T_g$. Moreover, we have $cl(\ig)=F_g\cup T_g^-$ and $F_g$ can only contain v-type elements.
   \item $T_g$ is convex if and only if $\ig$ is not a welding point of $G$.
   \item   $T_g$ is the biggest strictly convex subset of $G$ containing $g$ with 
  the following properties:
  \begin{enumerate}[(i)]
   \item  Every element of $T_g$ is o-type.
   \item $T_g$ contains exactly one class of $o^-$-type elements, and this class is smaller than every $o^+$-type element.
    \end{enumerate}
  \item for any $f_1,f_2,h\in T_g$ which are $o^+$-type, we have $f_1\precsim f_2\Rightarrow f_1h\precsim f_2h\wedge hf_1\precsim hf_2$. 
  \end{enumerate}

 \end{Prop}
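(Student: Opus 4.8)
The plan is to reduce the entire statement to one structural lemma about the position of the inverse, fix $g$ to be $o^+$-type throughout, and then read off (a)--(d) as bookkeeping. First I record the soft fact that $T_g^+$ is convex and consists only of $o^+$-type elements: if $h_1,h_2\in T_g^+$ and $h_1\precsim c\precsim h_2$, then $c$ lies in the union of the two intervals ``between $g$ and $h_1$'' and ``between $g$ and $h_2$'', which is itself an interval (the two pieces share the point $g$) and is entirely $o^+$-type by the definition of $T_g^+$; hence $c$ is $o^+$-type, every element between $g$ and $c$ stays inside this interval, and so $c\in T_g^+$. Since $g\in T_g^+$ we get $\ig\in T_g^-$.

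The heart of the matter is the following \emph{fundamental sub-lemma} (FSL): for $o^+$-type $g$ there is no $o^-$-type element $y$ with $\ig\precnsim y\precnsim g$; together with it I will need the companion fact that the only o-type elements equivalent to $\ig$ are those of $T_g^-$ (so $cl(\ig)$ contains no $o^+$-type element, and no stray $o^-$-type element outside $T_g^-$). I would prove these by contradiction: feeding the inequalities $\ig\precnsim y\precnsim g$ into $(CQ_2)$ and its contrapositive $(CQ_2')$ produces relations such as $g\precnsim yg$, $y\ig\precsim\ig$ and $\ig y\precsim\ig$, and Lemma \ref{lemfond} and Proposition \ref{toutesequivalences} then collapse the relevant products into equivalences; the resulting chain forces a misplaced $o^-$-type element to sit strictly between $g$ and an $o^+$-type element, contradicting the convexity of $T_g^+$ established above. \textbf{This case analysis is the main obstacle}, because $(CQ_2)$ and $(CQ_3)$ control $\precsim$ under multiplication only through the substitutions $x\mapsto x\iy$ and $x\mapsto x^z$, so turning ``$y$ is badly placed'' into a contradiction requires choosing exactly the right auxiliary products, and the possible presence of welding forbids the naive argument that equivalent elements share a type.

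Granting (FSL) and its companion, the order-type-like skeleton of $T_g$ follows formally. Applying (FSL) with base $g$ and with base $f$ (using that $T_f=T_g$ when $f\in T_g^+$) rules out both $\ig\precnsim f^{-1}$ and $f^{-1}\precnsim\ig$, so $f^{-1}\sim\ig$ for every $f\in T_g^+$; thus $\precsim$ is trivial on $T_g^-$ and $T_g^-\subseteq cl(\ig)$. A further short argument from (FSL) gives $\ig\precnsim f$ for all $f\in T_g^+$, whence $\ig=\min(T_g)$, $T_g^-\precnsim T_g^+$, and $\precsim$ is order-type-like on $T_g$. The companion fact yields $cl(\ig)\cap T_g=T_g^-$ and that $F_g:=cl(\ig)\setminus T_g$ consists only of v-type elements, which is the ``moreover'' part of (a). Part (a) itself is then case (ii) of Lemma \ref{strictconvexity}: $\min(T_g)=T_g^-\neq\varnothing$, $cl(m)=cl(\ig)$ for $m\in\min(T_g)$, and $T_g\cup cl(\ig)=T_g\cup F_g$ is convex (any $c$ with $\ig\precnsim c\precsim b$ and $b\in T_g^+$ lies in $T_g^+$ by the consequences of (FSL)), giving right-convexity with convexity complement $F_g$. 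Part (b) is immediate: $\ig$ is a welding point if and only if $cl(\ig)$ contains an element whose type is not $o^-$, if and only if $F_g\neq\varnothing$, if and only if the convexity complement is non-empty, if and only if $T_g$ is not convex.

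For the maximality in (c), $T_g$ satisfies (i) and (ii) by the skeleton, and any strictly convex $S\ni g$ satisfying (i),(ii) is contained in $T_g$: an $o^+$-type $s\in S$ forces every element strictly between $g$ and $s$ into $S$, hence to be o-type by (i), hence $o^+$-type (the unique $o^-$-class of $S$ is below every $o^+$-type element of $S$, so it cannot meet the interval between the $o^+$-type elements $g$ and $s$), so $s\in T_g^+$; an $o^-$-type $s\in S$ satisfies $s\precnsim g$ by (ii), and $s\sim\ig$, for otherwise either $\ig\precnsim s\precnsim g$ contradicts (FSL) or $s\precnsim\ig$ forces $\ig\in S$ by strict convexity and creates a second $o^-$-class, so $s\in cl(\ig)\cap\{o^-\text{-type}\}=T_g^-$. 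Finally (d) is an application of Proposition \ref{addition} with $f_1\precsim f_2$ in the role of $f\precsim g$ and $h$ as the multiplier: the only hypothesis to check is $f_2\nsim\ih$, which holds because $f_2,h\in T_g^+$ are $o^+$-type while $\ih$ is $o^-$-type, so $f_2\sim\ih$ would place the $o^-$-type element $\ih$ (being equivalent to $f_2$) between $g$ and $f_2$, contradicting $f_2\in T_g^+$; Proposition \ref{addition} then yields $f_1h\precsim f_2h$ and $hf_1\precsim hf_2$.
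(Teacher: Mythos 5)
Your overall architecture is sound and closely mirrors the paper's: everything in (a)--(d) is reduced to the statement that nothing of the wrong type can sit between $\ig$ and $g$ (equivalently, that $f\in T_g^+$ forces $f^{-1}\sim\ig$), and your derivations of right-convexity via Lemma \ref{strictconvexity}(ii), of (b) from $F_g=\varnothing$, of the maximality in (c), and of (d) from Proposition \ref{addition} with the check $f_2\nsim\ih$ all match the paper's proof in substance. However, there is a genuine gap at exactly the point you flag yourself: the ``fundamental sub-lemma'' and its companion are never actually proved. You list the axioms you would feed the hypotheses into and assert that Lemma \ref{lemfond} and Proposition \ref{toutesequivalences} will ``collapse the relevant products into equivalences,'' but you do not exhibit the chain of deductions, and you concede that the case analysis is the main obstacle. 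Since every one of (a)--(d) is bookkeeping \emph{given} this sub-lemma, omitting its proof omits the entire mathematical content of the proposition.

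For comparison, the paper closes this gap with a short, specific computation (its Lemma \ref{otypeequiv}): from $\ig\precnsim h\precsim g$, axiom $(CQ_2)$ gives $h\ig\precsim\ig$, hence $h\ig\precnsim h$; applying $(CQ_2)$ and $(CQ_3)$ to $h\ig\precsim h$ yields $\ig\precsim\ih$; and if this were strict, Lemma \ref{lemfond}(i) would give $h\sim h\ig$, directly contradicting $h\ig\precnsim h$. So $\ih\sim\ig$ and $h$ is $o^+$-type. Note that the contradiction is an immediate clash of two inequalities about $h\ig$, not (as your sketch predicts) a misplaced $o^-$-type element violating convexity of $T_g^+$; it is not clear that your proposed endpoint would be reachable without essentially rediscovering this computation. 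The companion fact (that $cl(\ig)$ contains no $o^+$-type element and no $o^-$-type element outside $T_g^-$, so $F_g$ is purely v-type) then follows by applying the same lemma with $h$ in the role of $g$, together with the characterization $h\in T_g^+\Leftrightarrow h$ is $o^+$-type and $\ih\sim\ig$ (the paper's Lemma \ref{apartenance}), which also needs its own short argument that you have not supplied. Until these two statements are actually derived from $(CQ_1)$--$(CQ_3)$ and Lemma \ref{lemfond}, the proof is incomplete.
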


\begin{Rem}\label{Remtgotype}\begin{enumerate}
            \item  Proposition \ref{tgotype}(c) basically says that $T_g$ is the biggest 
 strictly convex subset of $G$ containing $g$ on which $\precsim$ is
    order-type-like.
     \item It follows from Proposition \ref{tgotype}(a) and from Lemma \ref{strictconvexity}(ii) that\newline
     $\min(T_g)=cl(\ig)\cap T_g$.
     \item If $\ig$ is not a welding point, then we can replace ``strictly convex'' by ``convex'' in Proposition \ref{tgotype}(c).
     \item Example \ref{examplesfondamentales}(b) shows that $T_g$ is not always convex.
      \item It is interesting to note that property (d) in \ref{tgotype} is the property satisfied by ordered groups (see axiom (OG) in the introduction).
     \end{enumerate}
\end{Rem}

  We now state the main properties of $G_g$ and $G^g$: 

  \begin{Prop}[Quotient for o-type elements]\label{otypequotient}
   Both $G^g$ and $G_g$ are subgroups of $G$. Moreover, $G^g$ is convex and
   $G_g$ is the smallest normal strictly convex subgroup of $G^g$ such that the q.o induced by 
  $\precsim$ on $G^g/G_g$ is order-type.
 \end{Prop}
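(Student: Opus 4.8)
The plan is to establish $G_g$ first, then bootstrap up to $G^g$, reading off convexity from the fact that $G_g$ is an initial segment while $T_g$ is order-type-like, and finally handling normality and the quotient. Throughout I assume $g$ is $o^+$-type, which is harmless by the definition $T_g:=T_{\ig}$ in the $o^-$ case.

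First I would show that $G_g=\{h\mid\{h,\ih\}\precsim\ig\}$ is a subgroup and that $\{h\mid h\precsim\ig\}\subseteq G^g$. Closure under inverse is immediate from the symmetry of the defining condition, and $1\in G_g$ by $(CQ_1)$. For products, take $h_1,h_2\in G_g$; by symmetry it suffices to prove $h_1h_2\precsim\ig$. Since $\{h_1,\ih_1\}\precsim\ig\precnsim g$, Proposition \ref{addition} applied to $h_2\precsim\ig$ (multiplying on the left by $h_1$, using the disjunct $\{h_1,\ih_1\}\precsim\ig\precnsim g$) gives $h_1h_2\precsim h_1\ig$, while Lemma \ref{lemfond}(iii) gives $h_1\ig\sim\ig$; hence $h_1h_2\precsim\ig$, and the same reasoning for $\ih_2\ih_1$ yields $h_1h_2\in G_g$. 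Next, for any $z\precnsim\ig$ we have $z\precnsim\{g,\ig\}$ because $\ig\precnsim g$, so Lemma \ref{lemfond}(ii) gives $\iz\precnsim\ig$, whence $z\in G_g$; and any $z\sim\ig$ lies in $G_g$ unless it is $o^-$-type, in which case $z\in cl(\ig)\cap\{o^-\text{-type}\}=T_g^-\subseteq T_g$. Thus $\{h\mid h\precsim\ig\}\subseteq G_g\cup T_g=G^g$.

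Convexity of $G^g$ then follows quickly: every element of $G_g$ is $\precsim\ig$ and $\ig\precnsim T_g^+$, so $G^g=\{h\mid h\precsim\ig\}\cup(T_g\cup cl(\ig))$, where the first set is an initial segment sitting in $G^g$ and $T_g\cup cl(\ig)$ is convex by the right-convexity of $T_g$ (Proposition \ref{tgotype}(a) together with Lemma \ref{strictconvexity}(ii)). Given $x\precsim w\precsim y$ with $x,y\in G^g$, either $w\precsim\ig$, so $w\in G^g$, or $\ig\precnsim w$, in which case $w$ lies between $\ig$ and an element of $T_g^+$, both in the convex set $T_g\cup cl(\ig)$, so again $w\in G^g$; hence $G^g$ is convex (in fact a downward-closed initial segment). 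For the subgroup property, $G^g$ is symmetric since $G_g$ and $T_g=T_g^+\cup T_g^-$ are, and closure under products splits into cases: when a factor lies in $G_g$, Lemma \ref{lemfond}(iii) (or Proposition \ref{toutesequivalences}) shows the product is equivalent to the other factor and so stays in $G^g$ by convexity, and the mixed-sign and $T_g^-\cdot T_g^-$ cases reduce by cancellation and by taking inverses. The genuine difficulty, which I expect to be \emph{the main obstacle}, is the product of two $o^+$-type elements of $T_g^+$: I must show it is again $o^+$-type and that no $v$- or $o^-$-type element lies between $g$ and it, so that it remains in $T_g^+$. The idea is to combine the order-type-like multiplication of Proposition \ref{tgotype}(d) with the maximality in Proposition \ref{tgotype}(c): any element strictly between two $o^+$-type elements of $T_g^+$ is itself $o^+$-type by strict convexity, which pins the product inside $T_g^+$.

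Finally, for normality I would show $g^z\sim g$ for every $z\in G^g$: when $z\in G_g$ this follows from Proposition \ref{toutesequivalences} and \ref{addition} (conjugating the $o^+$-type $g$ by an element with $\{z,\iz\}\precsim\ig$), and when $z\in T_g$ it follows from the order-type-like structure of Proposition \ref{tgotype}(d). Then $(CQ_3)$ gives $h\precsim\ig\Leftrightarrow h^z\precsim(g^z)^{-1}$, and since $(g^z)^{-1}\sim\ig$ we obtain $G_g^z=G_g$, so $G_g\trianglelefteq G^g$. As $G_g$ is a strictly convex normal subgroup of $G^g$, Proposition \ref{strictconvexquotientProp} induces a C-q.o on $G^g/G_g$, and I would verify it is order-type via Proposition \ref{ordertypegroup}, taking as positive part the image of $T_g^+$: by Proposition \ref{tgotype} every nontrivial class is $o$-type, $T_g^-=cl(\ig)$ collapses onto the single $o^-$-class lying below all $o^+$-type elements, and $\precsim$ restricts to an order on the image of $T_g^+$. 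Minimality is then read off from Proposition \ref{ordertypegroup}: if $N\trianglelefteq G^g$ is strictly convex with $G^g/N$ order-type, then since an order-type C-q.o has no $v$-type elements and only one $o^-$-class, every element of $G_g$ must map to $1$, giving $G_g\subseteq N$.
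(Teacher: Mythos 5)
There is a genuine gap at the step you yourself flag as the main obstacle: closure of $G^g$ under products of two elements of $T_g^+$. Your proposed mechanism --- combine Proposition \ref{tgotype}(d) with the maximality/strict convexity of Proposition \ref{tgotype}(c) so that the product is ``pinned'' inside $T_g^+$ --- cannot work, because a convexity argument needs an element of $T_g^+$ sitting \emph{above} $fh$, and producing such an upper bound is precisely the problem. The failure mode you must exclude is $T_g^+\precnsim fh$, equivalently $(fh)^{-1}\precnsim \ig$, i.e.\ $(fh)^{-1}\in G_g$; nothing in (c) or (d) rules this out. The missing ingredient is Lemma \ref{apartenance}, which characterizes membership without any upper bound: $w\in T_g^+$ iff $w$ is $o^+$-type and $\iw\sim\ig$. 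With it the case closes by computing the inverse: for $f,h\in T_g^+$ one has $\ih\sim\ig\sim f^{-1}\precnsim f$, so Lemma \ref{lemfond}(i) gives $(fh)^{-1}=\ih f^{-1}\sim f^{-1}\sim\ig$, while Proposition \ref{addition} gives $f\precsim fh$, whence $(fh)^{-1}\sim\ig\precnsim f\precsim fh$ and $fh\in T_g^+$. This is exactly how the paper's proof handles its hard case (phrased as $h_1\ih_2\precsim f\ih_2$ with $f\in T_g^+$ an upper bound of $\{h_1,h_2\}$, then $f\ih_2\in T_g^+$ via \ref{apartenance} and \ref{lemfond}(i)). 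The same remark applies to your ``mixed-sign'' case $T_g^+\cdot T_g^-$: writing it as $f(\ih)^{-1}$ with $f,\ih\in T_g^+$ does not ``reduce by cancellation'' --- cancellation presupposes the subgroup property you are trying to prove --- and this case again needs the \ref{apartenance}/\ref{lemfond}(i) computation (or Lemma \ref{ordretgplus} when the two factors are equivalent).

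A secondary error: in the normality step you assert $g^z\sim g$ for every $z\in G^g$. This is false in general for $z\in T_g$ (in a non-abelian ordered group, conjugation preserves positivity but not the archimedean class of a positive element), and Proposition \ref{tgotype}(d) does not yield it. What is true, and all you need, is $(\ig)^z\sim\ig$; the paper derives $(\ig)^z\precsim\ig$ from the facts that conjugation preserves types, that $G^g$ is a subgroup (so $(\ig)^z\in G^g$), and that $G^g$ contains no $o^-$-type element strictly above $\ig$ --- note this argument is only available \emph{after} the subgroup property of $G^g$ is secured. The remaining parts of your proposal ($G_g$ a subgroup via \ref{addition} and \ref{lemfond}(iii), convexity of $G^g$ as an initial segment, the quotient being order-type via \ref{ordertypegroup} and \ref{ordretgplus}, and minimality by producing a nontrivial $v$-type or second $o^-$-type class in $G^g/N$) follow the paper's route and are sound.
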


\begin{Rem}
   If $\ig$ is not a welding point, then $G_g$ is actually convex. However, Example \ref{examplesfondamentales}(b) shows 
   that $G_g$ is not convex in general. We see that the existence of welding makes the structure of $G$ less smooth, since 
   it prevents the type-components from being convex.
  \end{Rem}

Our goal is now to prove Propositions \ref{tgotype} and \ref{otypequotient}. We start by characterizing the elements of $T_g^+$ in the next two lemmas:

 \begin{Lem}\label{otypeequiv}
  Assume $\ig\precnsim h\precsim g$. Then $\ih\sim \ig$, and in particular $h$ is $o^+$-type.
 \end{Lem}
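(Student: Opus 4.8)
The plan is to reduce everything to proving $\ih\sim\ig$, since once this is known the ``in particular'' clause follows at once: from the hypothesis $\ig\precnsim h$ together with $\ih\sim\ig$ we get $\ih\precnsim h$, i.e. $h$ is $o^+$-type. To prove $\ih\sim\ig$ I would argue by trichotomy: since $\precsim$ is total, exactly one of $\ih\sim\ig$, $\ih\precnsim\ig$, $\ig\precnsim\ih$ holds, and I would rule out the two strict alternatives.

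The key tool is Lemma \ref{lemfond}(ii), which converts a hypothesis of the form ``$k$ lies strictly below both $b$ and $\ib$'' into the conclusion that $\ik$ also lies strictly below both $b$ and $\ib$. First suppose $\ig\precnsim\ih$. Combined with the hypothesis $\ig\precnsim h$ this gives $\ig\precnsim\{\ih,h\}$, so applying Lemma \ref{lemfond}(ii) with $k:=\ig$ and $b:=h$ yields $g\precnsim\{h,\ih\}$, in particular $g\precnsim h$, contradicting $h\precsim g$. Next suppose $\ih\precnsim\ig$. Since $\ig\precnsim h\precsim g$ forces $\ig\precnsim g$, we obtain $\ih\precnsim\{\ig,g\}$, and applying Lemma \ref{lemfond}(ii) with $k:=\ih$ and $b:=g$ yields $h\precnsim\{g,\ig\}$, in particular $h\precnsim\ig$, contradicting $\ig\precnsim h$. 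Both strict cases being impossible, we conclude $\ih\sim\ig$, and then $h$ is $o^+$-type as noted above.

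I expect the main obstacle to be that the obvious route is circular: applying $(CQ_2)$ directly to $\ig\precnsim h$ or to $h\precsim g$ only reproduces the hypotheses (for instance $\ih\precsim\ig$ is equivalent, via $(CQ_2)$, to $\ih g\precsim g$, which by itself recovers nothing new), so no amount of rewriting a single inequality isolates the comparison between $\ih$ and $\ig$. The decisive idea is instead to feed the \emph{assumed} strict comparison between $\ih$ and $\ig$ into Lemma \ref{lemfond}(ii) so as to contradict the \emph{other} hypothesis; this is precisely what couples the two inequalities $\ig\precnsim h$ and $h\precsim g$ and makes the argument go through.
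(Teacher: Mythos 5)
Your proof is correct, but it is organized differently from the paper's. You run a full trichotomy and eliminate both strict alternatives by two symmetric applications of Lemma \ref{lemfond}(ii); the verification of that lemma's hypotheses in each case (using $\ig\precnsim h$ in the first, and $\ig\precnsim g$ --- which indeed follows from the hypotheses --- in the second) is sound, and each conclusion ($g\precnsim h$, respectively $h\precnsim\ig$) genuinely contradicts a hypothesis. The paper instead first derives the non-strict inequality $\ig\precsim\ih$ directly: applying $(CQ_2)$ to $h\precsim g$ gives $h\ig\precsim\ig$, chaining with $\ig\precnsim h$ gives $h\ig\precsim h$, and one more application of $(CQ_2)$ and $(CQ_3)$ turns this into $\ig\precsim\ih$; only then does it exclude strictness, via Lemma \ref{lemfond}(i) (if $\ig\precnsim\ih$ held, then $h\sim h\ig\precsim\ig$, contradicting $\ig\precnsim h$). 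So your closing remark that no amount of rewriting with $(CQ_2)$ can isolate the comparison of $\ih$ with $\ig$ is a little too pessimistic: the paper's trick is precisely to apply $(CQ_2)$ to the \emph{chained} inequality $h\ig\precsim h$ rather than to either hypothesis alone. Your version buys symmetry and relies only on Lemma \ref{lemfond}(ii); the paper's buys a shorter case analysis (one excluded case instead of two) at the cost of an extra round of $(CQ_2)$/$(CQ_3)$ manipulation. Both are valid and of comparable length.
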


 \begin{proof}
  By $(CQ_2)$, $h\precsim g$ implies $h\ig\precsim \ig$, hence $h\ig\precsim h$. By $(CQ_2)$  and $(CQ_3)$, this implies
  $\ig\precsim \ih$. Now assume that $\ig\precnsim \ih$ holds. By Lemma \ref{lemfond}, we then have $h\sim h\ig\precsim \ig$, which is a
  contradiction. Therefore, $\ig\sim \ih$.
 \end{proof}

 \begin{Lem}\label{apartenance}
   For any $h\in G$, $h\in T_g^+$ if and only if 
  $h$ is $o^+$-type and $\ih\sim \ig$.  
  In particular, $g\in T_g^+$ and $T_g^-\subseteq cl(\ig)$.
 \end{Lem}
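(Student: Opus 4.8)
The plan is to reduce both implications to Lemma \ref{otypeequiv}, using one simple type observation: since $g$ is $o^+$-type we have $\ig\precnsim g$, so $\ig$ is $o^-$-type and in particular never $o^+$-type (and likewise $\ih$ whenever $h$ is $o^+$-type). Because membership in $T_g^+$ forces \emph{every} element between $g$ and $h$ to be $o^+$-type, this means $\ig$ and $\ih$ can never lie between $g$ and $h$. This single remark drives the whole argument.

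For the forward implication, assume $h\in T_g^+$; then $h$ is $o^+$-type by definition, so it remains to prove $\ih\sim\ig$. Since $\ig,\ih$ are $o^-$-type they are not between $g$ and $h$; combined with $\ig\precnsim g$ and $\ih\precnsim h$, unwinding the definition of ``between'' yields $\ig\precnsim h$ and $\ih\precnsim g$. I then split on totality. If $h\precsim g$, then $\ig\precnsim h\precsim g$ and Lemma \ref{otypeequiv} gives $\ih\sim\ig$ directly. If $g\precsim h$, then $\ih\precnsim g\precsim h$, and applying Lemma \ref{otypeequiv} with the roles of $g$ and $h$ exchanged gives $\ig\sim\ih$. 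Either way $\ih\sim\ig$.

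For the converse, assume $h$ is $o^+$-type and $\ih\sim\ig$; I must show every element $z$ between $g$ and $h$ is $o^+$-type. From $\ig\precnsim g$ and $\ig\sim\ih\precnsim h$, together with the fact that either $g\precsim z$ or $h\precsim z$ (because $z$ is between $g$ and $h$), I first obtain $\ig\precnsim z$. Now if $z\precsim g$, then $\ig\precnsim z\precsim g$ and Lemma \ref{otypeequiv} yields $\iz\sim\ig$; if instead $g\precnsim z$, then $z$ must satisfy $\ih\precnsim z\precsim h$, and Lemma \ref{otypeequiv} based at $h$ yields $\iz\sim\ih$. In both cases $\iz\sim\ig\precnsim z$, hence $\iz\precnsim z$, i.e. $z$ is $o^+$-type. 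Therefore $h\in T_g^+$.

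Finally, the two ``in particular'' assertions are immediate: taking $h=g$ in the equivalence (using $\ig\sim\ig$) gives $g\in T_g^+$, and since $T_g^-=(T_g^+)^{-1}$, the characterization says each element of $T_g^-$ has the form $\ih$ with $\ih\sim\ig$, whence $T_g^-\subseteq cl(\ig)$. The only delicate point is the bookkeeping: correctly orienting the hypothesis $\ig\precnsim h'\precsim g'$ of Lemma \ref{otypeequiv} in each case, and verifying the forced inequalities $\ig\precnsim h$, $\ih\precnsim g$ (respectively $\ig\precnsim z$). Welding causes no trouble here, since the argument uses only non-strict inequalities and $T_g^+$ is defined via ``between'' rather than ``strictly between''.
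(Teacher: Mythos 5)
Your proof is correct and follows essentially the same route as the paper: both directions ultimately rest on the observation that $\ig$ and $\ih$, being $o^-$-type, cannot lie between $g$ and $h$, together with Lemma \ref{otypeequiv}. The only cosmetic difference is in the forward implication, where the paper argues by contradiction via Lemma \ref{lemfond}(ii) while you extract the inequalities $\ig\precnsim h$ and $\ih\precnsim g$ directly from totality and then invoke Lemma \ref{otypeequiv} a second time; both versions are equally valid.
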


 \begin{proof}
  Assume $h\in T_g^+$.  
  If $\ih\precnsim \ig$, then $\ih\precnsim \ig\precnsim g$. By Lemma \ref{lemfond}, this implies 
  $h\precnsim \ig\precnsim g$, so there is an $o^-$-type element between $h$ and $g$, which is a contradiction.  
  If $\ig\precnsim \ih$, then by the same reasoning we get $g\precnsim \ih\precnsim h$, which is also a contradiction.
  This proves that $\ih\sim \ig$.  Conversely,
  assume that $h$ is $o^+$-type and $\ih\sim \ig$. We want to show that every $f$ between $h$ and $g$ is $o^+$-type.
  Since $f$ is between $h$ and $g$ and since $\ih\sim \ig$, we either have $\ih\precnsim f\precsim h$ or 
  $\ig\precnsim f\precsim g$. By Lemma \ref{otypeequiv}, this implies that $f$ is $o^+$-type.
 \end{proof} 
 
 As a direct consequence of these two lemmas, we have that the q.o is order-type-like on $T_g$:
 
 \begin{Prop}\label{tglookslikeotype}
  $T_g$ contains exactly one class of $o^-$-type elements, which is $T_g^-$. Moreover,
   $T_g^-\precnsim T_g^+$ and there is no $h$ such that $T_g^-\precnsim h\precnsim T_g^+$
 \end{Prop}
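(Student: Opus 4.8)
The plan is to read off all three assertions directly from the two lemmas just proved, Lemmas \ref{otypeequiv} and \ref{apartenance}, keeping the standing assumption that $g$ is $o^+$-type. The key facts I will repeatedly invoke are that $T_g^+$ consists of $o^+$-type elements $h$ with $\ih\sim\ig$, that $T_g^-=(T_g^+)^{-1}$, and that $g\in T_g^+$ (hence $\ig\in T_g^-$), all of which come from Lemma \ref{apartenance}.

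For the first claim I would pin down which elements of $T_g$ are $o^-$-type. Since $T_g=T_g^+\cup T_g^-$ with $T_g^+$ consisting of $o^+$-type elements and $T_g^-$ consisting of their inverses, which are $o^-$-type, the $o^-$-type elements of $T_g$ are exactly those of $T_g^-$. Lemma \ref{apartenance} gives $T_g^-\subseteq cl(\ig)$, so these elements all lie in the single class $cl(\ig)$; and $\ig\in T_g^-$ shows the class is non-empty. This is precisely the statement that $T_g$ contains exactly one class of $o^-$-type elements, namely $T_g^-$.

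For $T_g^-\precnsim T_g^+$ I would fix $h\in T_g^+$ and use that $h$ is $o^+$-type, so $\ih\precnsim h$, together with $\ih\sim\ig$ from Lemma \ref{apartenance}, to obtain $\ig\precnsim h$. Every $f\in T_g^-$ satisfies $f\sim\ig$, whence $f\precnsim h$; since $h\in T_g^+$ was arbitrary this yields $T_g^-\precnsim T_g^+$.

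The last assertion I would prove by contradiction: if some $h$ satisfied $T_g^-\precnsim h\precnsim T_g^+$, then testing against the concrete members $\ig\in T_g^-$ and $g\in T_g^+$ gives $\ig\precnsim h\precnsim g$, so Lemma \ref{otypeequiv} forces $h$ to be $o^+$-type with $\ih\sim\ig$, and Lemma \ref{apartenance} then puts $h\in T_g^+$, contradicting $h\precnsim T_g^+$. I expect the whole proposition to be essentially a repackaging of the two lemmas rather than a genuine obstacle; the only point demanding a moment's care is this final reduction of the two-sided condition to the single chain $\ig\precnsim h\precnsim g$, which is legitimate exactly because $\ig$ and $g$ are honest elements of $T_g^-$ and $T_g^+$, so that Lemma \ref{otypeequiv} applies without modification.
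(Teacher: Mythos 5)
Your proof is correct and follows essentially the same route as the paper: both derive all three assertions directly from Lemmas \ref{otypeequiv} and \ref{apartenance}. The only difference is that you spell out the middle claim $T_g^-\precnsim T_g^+$, which the paper leaves implicit.
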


 \begin{proof}
  The fact that there is exactly one class of $o^-$type elements is a consequence of Lemma \ref{apartenance}. 
  If $h$ satisfies $T_g^-\precnsim h\precsim T_g^+$, then by Lemma \ref{otypeequiv} $h\in T_g^+$, so we don't have $h\precnsim T_g^+$.
 \end{proof}

 We can now show Proposition \ref{tgotype}:

\begin{proof}[proof of \ref{tgotype}]
 We first prove (a). It is clear from its definition that $T_g^+$ is convex. We also know 
 that $\min(T_g)=T_g^-\subseteq cl(\ig)$ and that there is no element strictly between
 $T_g^-$ and $T_g^+$. It follows that $T_g\cup cl(\ig)$ is convex, which in particular means that 
 $T_g$ is right-convex and that $F_g:=cl(\ig)\backslash T_g$ is the convexity complement of $T_g$ 
 (see Lemma \ref{strictconvexity}). Since $T_g^-\subseteq cl(\ig)$, it follows from the definition of $F_g$ that 
 $cl(\ig)=F_g\cup T_g^-$.
 Now let $h\in F_g$.
 Then $h\sim \ig$.  If $h$ were $o^+$-type, then we would  have $\ih\precnsim\ig\precsim h$. By Lemma \ref{otypeequiv}, this
 would imply that $\ig$ is $o^+$-type, which is a contradiction. Thus, 
 $h$ cannot be $o^+$-type. If $h$ were $o^-$-type, then by Lemma
  \ref{apartenance} we would have $h\in T_g^-$, which is excluded, so $h$ cannot be $o^-$-type. Thus, $h$ must be v-type. It then follows that 
  $F_g=\varnothing$ if and only if $\ig$ is not a welding point, hence (b). 
 Now let us prove (c). It only remains to prove that there is no strictly convex set bigger than $T_g$ satisfying (i) and (ii).
 Towards a contradiction, let $S\varsupsetneq T_g$ be such a set and take $h\in S\backslash T_g$. 
 Assume first that $T_g^+\precnsim h$. Then $h$ must be $o^+$-type and $\ih\precsim T_g^-$.
 Let $g\precsim f\precsim h$. We have 
 $\ih\precnsim f\precsim h$. By Lemma \ref{otypeequiv}, this implies that $f$ is $o^+$-type. Thus, every element between
 $g$ and $h$ is $o^+$-type, so $h\in T_g^+$, which is a contradiction.
 Assume that $h\precsim T_g^-$. Then $h$ must be $o^-$-type and $T_g^+\precnsim \ih$.
 We then have $h\precnsim g\precsim \ih$. By \ref{otypeequiv}, this implies $\ig\sim h$, which 
 means $\ih\in T_g^+$: contradiction. 
 (d) is a direct consequence of Proposition \ref{addition}, since $\ih\nsim f_2$.
\end{proof}

 We mentioned in remark \ref{Remtgotype} that the q.o $\precsim$ on $T_g$ is order-type-like. In fact, the only difference between the structure of $T_g$ and the group in Proposition 
 \ref{ordertypegroup} is that 
 $\precsim$ is not an order on $T_g^+$ (see for example 
 $T_h^+$ in Example \ref{componentsexample}). However, we have the following, which will be useful in the proof of Proposition \ref{otypequotient}:
 
 \begin{Lem}\label{ordretgplus}
  Let $f,h\in T_g^+$ and $f\sim h$. Then $f\ih\in G_g$.
 \end{Lem}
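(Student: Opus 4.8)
The plan is to unfold the definition of $G_g$ and reduce the claim to two inequalities, each of which follows directly from axiom $(CQ_2)$ together with Lemma \ref{apartenance}. Since $g$ is $o^+$-type, the definition of $G_g$ reads $G_g=\{k\mid\{k,k^{-1}\}\precsim\ig\}$, so membership $f\ih\in G_g$ means precisely that $f\ih\precsim\ig$ and $(f\ih)^{-1}\precsim\ig$. Observing that $(f\ih)^{-1}=hf^{-1}$, the whole lemma therefore amounts to establishing the two inequalities $f\ih\precsim\ig$ and $hf^{-1}\precsim\ig$.

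For the first step I would exploit the hypothesis $f\sim h$, which by definition unpacks into both $f\precsim h$ and $h\precsim f$. Applying $(CQ_2)$ to $f\precsim h$ gives $f\ih\precsim\ih$, and applying $(CQ_2)$ to the reversed inequality $h\precsim f$ gives $hf^{-1}\precsim f^{-1}$. This symmetric use of $(CQ_2)$ on the two halves of the equivalence $f\sim h$ is the core of the argument.

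It then remains to control the right-hand sides $\ih$ and $f^{-1}$. Since $f,h\in T_g^+$, Lemma \ref{apartenance} tells us that $\ih\sim\ig$ and $f^{-1}\sim\ig$. Combining this with the previous step yields $f\ih\precsim\ih\sim\ig$ and $hf^{-1}\precsim f^{-1}\sim\ig$, hence $f\ih\precsim\ig$ and $hf^{-1}\precsim\ig$, which is exactly what was required to conclude $f\ih\in G_g$.

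I do not anticipate any real obstacle here; the argument is short and essentially formal. The only two points to be careful about are matching the definition of $G_g$ in the $o^+$-type case (which forces us to bound \emph{both} $f\ih$ and its inverse, not just one of them) and keeping track of the identity $(f\ih)^{-1}=hf^{-1}$. It is worth noting that the type of the element $f\ih$ itself never enters the argument, since $G_g$ is defined purely in terms of the quasi-order $\precsim$.
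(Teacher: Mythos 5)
Your proof is correct and follows essentially the same route as the paper: apply $(CQ_2)$ to both halves of $f\sim h$ to get $f\ih\precsim\ih$ and $hf^{-1}\precsim f^{-1}$, then use Lemma \ref{apartenance} to identify $\ih$ and $f^{-1}$ with $\ig$ up to equivalence. The paper's proof is a two-line version of exactly this argument.
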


 \begin{proof}
  By simply applying $(CQ_2)$ to the inequalities $f\precsim h\precsim f$
  we obtain $f\ih\precsim \ih\sim \ig$ and $hf^{-1}\precsim f^{-1}\sim \ig$, hence $f\ih\in G_g$.
 \end{proof}
 
Intuitively, we see from Lemma \ref{ordretgplus} that the q.o induced by $\precsim$ on the quotient $G^g/G_g$ will satisfy the condition of 
Proposition \ref{ordertypegroup}, where the sets $G^-$ and $G^+$ of Proposition \ref{ordertypegroup} will respectively correspond to
 $T_g^-/G_g:=\{hG_g\mid h\in T_g^-\}$ and $T^+/G_g:=\{hG_g\mid h\in T_g^+\}$. The next two propositions will help us prove
  Proposition  \ref{otypequotient}:
 
 \begin{Prop}\label{Ggstrictconvex}
 We have  $G_g=\{h\precnsim \ig\}\cup F_g$, where $F_g$ is as in Proposition \ref{tgotype}. In particular, $G_g$
 is left-convex, and it is convex if and only if $F_g=\varnothing$. This in turn holds
  if and only if $T_g$ is convex. Moreover, if $G_g$ is not convex, then its convexity complement is $T_g^-$ and we have 
  $F_g=\max(G_g)$.
 \end{Prop}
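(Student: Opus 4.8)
The plan is to prove the claim about $G_g$ for an $o^+$-type element $g$ by first establishing the set-theoretic description $G_g=\{h\precnsim\ig\}\cup F_g$, and then reading off the convexity properties from that description together with Lemma \ref{strictconvexity}. Recall that by definition $G_g=\{h\mid\{h,\ih\}\precsim\ig\}$ and that $F_g=cl(\ig)\backslash T_g$ consists (by Proposition \ref{tgotype}(a)) only of v-type elements equivalent to $\ig$. First I would show the inclusion $\{h\precnsim\ig\}\cup F_g\subseteq G_g$: if $h\precnsim\ig$ then since $\ig$ is $o^-$-type we have $\ig\precnsim g$, so $h\precnsim\ig$; and I must also check $\ih\precsim\ig$, which follows from Lemma \ref{lemfond}(i) applied to $h\precnsim\ig$ (giving $\ig\sim h\ig$, hence $\ih\sim\ig\ig^{-1}$-type control) — more directly, $h\precnsim\ig\precnsim g$ with $h$ then forced by Lemma \ref{otypeequiv}/\ref{lemfond} to satisfy $\ih\precsim\ig$ as well. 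For $h\in F_g$, $h$ is v-type with $h\sim\ig$, so $\ih\sim h\sim\ig$ and hence $\{h,\ih\}\precsim\ig$, giving $h\in G_g$.

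For the reverse inclusion $G_g\subseteq\{h\precnsim\ig\}\cup F_g$, I would take $h\in G_g$, so $\{h,\ih\}\precsim\ig$, and argue that either $h\precnsim\ig$ or $h\sim\ig$. In the latter case $h\in cl(\ig)$, and I must show $h\in F_g$, i.e. $h\notin T_g$; since $T_g^-\subseteq cl(\ig)$ but $h\in T_g^-$ would force $\ih\in T_g^+$ with $\ig\precnsim\ih$, contradicting $\ih\precsim\ig$, we get $h\notin T_g^-$, and $h\notin T_g^+$ because $T_g^+\succ\ig$-side. Hence $h\in cl(\ig)\backslash T_g=F_g$. This establishes the displayed equality.

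From $G_g=\{h\precnsim\ig\}\cup F_g$ the remaining assertions follow quickly. The set $\{h\precnsim\ig\}$ is an initial segment (an initial, strictly-below-$cl(\ig)$ piece), and $F_g\subseteq cl(\ig)$ sits immediately above it; so $G_g$ is left-convex by the definition of left-convex, with $\max(G_g)=F_g$ when $F_g\neq\varnothing$ (every element of $F_g$ is $\sim\ig$ and everything else in $G_g$ is $\precnsim\ig$). Left-convexity gives strict convexity, and $G_g$ is convex exactly when its convexity complement is empty, i.e. when $F_g=\varnothing$; by Proposition \ref{tgotype}(b) this is equivalent to $T_g$ being convex. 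When $F_g\neq\varnothing$, invoking Lemma \ref{strictconvexity}(iii) (the left-convex case) identifies the convexity complement as $cl(M)\backslash G_g$ for $M\in\max(G_g)$; since $\max(G_g)=F_g\subseteq cl(\ig)$ and $cl(\ig)=F_g\cup T_g^-$ with $F_g\subseteq G_g$, the complement is exactly $cl(\ig)\backslash G_g=T_g^-$.

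The main obstacle I anticipate is the careful bookkeeping in the reverse inclusion and in pinning down which equivalence class $F_g$ lives in: I must use both conditions $h\precsim\ig$ and $\ih\precsim\ig$ in tandem, repeatedly appealing to Lemma \ref{lemfond} and Lemma \ref{otypeequiv} to rule out that an element of $cl(\ig)\cap G_g$ is $o$-type (it cannot be $o^+$-type, since that would force $\ih\precnsim\ig$, and it cannot be $o^-$-type without landing in $T_g^-$ and violating $\ih\precsim\ig$). Everything else is a direct translation through Lemma \ref{strictconvexity} and Proposition \ref{tgotype}, so the content really lies in the two inclusions and the correct reading of the convexity complement.
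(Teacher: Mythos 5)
Your proposal is correct and follows essentially the same route as the paper: both establish the set equality $G_g=\{h\precnsim \ig\}\cup F_g$ by the same two inclusions (using Lemma \ref{lemfond} for $h\precnsim\ig\Rightarrow\ih\precnsim\ig$ and the decomposition $cl(\ig)=T_g^-\cup F_g$ to place the equivalent-to-$\ig$ elements in $F_g$) and then read off left-convexity, $\max(G_g)=F_g$, and the convexity complement $T_g^-$ via Lemma \ref{strictconvexity}(iii). The only nitpick is that the precise reference for $h\precnsim\ig\Rightarrow\ih\precnsim\ig$ is Lemma \ref{lemfond}(ii) applied to $h\precnsim\{\ig,g\}$ (which holds since $\ig\precnsim g$), not part (i) or Lemma \ref{otypeequiv}.
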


 \begin{proof}
  Let $h\in G_g$. Then in particular $\{h,\ih\}\precsim \ig$. If $h\sim \ig$, then we have  $h\in cl(\ig)=T_g^-\cup F_g$. Since 
  $\ih\precsim \ig$, we cannot have $h\in T_g^-$ (otherwise we would have $\ih\in T_g^+$), so $h\in F_g$.  
  Conversely, assume $h\precnsim \ig$. By Lemma \ref{lemfond}(ii), this implies 
  $\ih\precnsim \ig$, hence $h\in G_g$. Assume $h\in F_g$. Then $h\sim \ig$ and $h$ is v-type, so $\ih\sim h\precsim \ig$, hence
  $h\in G_g$.  
  If $F_g=\varnothing$, then $G_g=\{h\precnsim \ig\}$ is clearly convex.
  Now assume that $F_g\neq\varnothing$. By definition of $F_g$, we have 
  $f\sim\ig$ for every $f\in F_g$, hence  
   $h\precsim f$ for every $f\in F_g$ and $h\in G_g$. Since 
  $F_g\subseteq G_g$, it follows that $F_g=\max(G_g)$. Now take any $f\in F_g$.
  We then have $f\in G_g$, $f\sim \ig$, but $\ig\notin G_g$, so $G_g$ is not convex. Moreover, 
  we have
  $T_g^-=cl(f)\backslash G_g$. By Lemma \ref{strictconvexity}(iii), this implies that $T_g^-$ is the convexity complement of $G_g$. 
 \end{proof}

 \begin{Prop}\label{Gginitialsegment}
  $G^g$ is an initial segment of $G$.
 \end{Prop}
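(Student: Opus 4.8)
The plan is to first rewrite $G^g$ in a transparent form and then verify the initial-segment property by a short case analysis. Recall that we may assume $g$ is $o^+$-type and that $G^g = G_g \cup T_g$. By Proposition \ref{Ggstrictconvex} we have $G_g = \{h \precnsim \ig\} \cup F_g$, while Proposition \ref{tgotype}(a) gives $cl(\ig) = F_g \cup T_g^-$. Combining these with $T_g = T_g^+ \cup T_g^-$, I would compute $G^g = \{h \precnsim \ig\} \cup F_g \cup T_g^- \cup T_g^+ = \{h \precnsim \ig\} \cup cl(\ig) \cup T_g^+ = \{h \precsim \ig\} \cup T_g^+$. The point of routing through the decomposition of $cl(\ig)$ is that this identity then holds uniformly, whether or not $\ig$ is a welding point.

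With this form in hand, let $x \in G^g$ and $y \precsim x$; I must show $y \in G^g$. If $x \precsim \ig$, then $y \precsim x \precsim \ig$ by transitivity, so $y \in \{h \precsim \ig\} \subseteq G^g$. The remaining case is $x \in T_g^+$, where Lemma \ref{apartenance} gives $\ix \sim \ig$. If $y \precsim \ig$ we conclude as before, so assume $\ig \precnsim y$. Then $\ix \sim \ig \precnsim y \precsim x$, that is $\ix \precnsim y \precsim x$.

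The key step is to apply Lemma \ref{otypeequiv} with $x$ in the role of $g$: since $x$ is $o^+$-type, the hypothesis $\ix \precnsim y \precsim x$ yields that $y$ is $o^+$-type with $\iy \sim \ix$, hence $\iy \sim \ig$. Lemma \ref{apartenance} then gives $y \in T_g^+ \subseteq G^g$, which closes the case analysis and shows that $G^g$ is an initial segment.

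I do not expect a genuine obstacle here; the one thing to watch is that the betweenness lemmas \ref{otypeequiv} and \ref{apartenance}, though stated for the fixed $g$, are really assertions about an arbitrary $o^+$-type element and its inverse, so they legitimately apply with $x$ in place of $g$. The other potential subtlety, namely ensuring that no element lies strictly between $cl(\ig)$ and $T_g^+$ while escaping $G^g$, is precisely what the explicit form $G^g = \{h \precsim \ig\} \cup T_g^+$ together with Lemma \ref{otypeequiv} rules out (and is already reflected in Proposition \ref{tglookslikeotype}).
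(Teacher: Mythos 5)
Your proof is correct and follows essentially the same route as the paper: both rest on the decomposition $G^g=\{h\precsim \ig\}\cup T_g^+$ and the fact that an element below something in $T_g^+$ either drops to $\{h\precsim\ig\}$ or stays in $T_g^+$. The only difference is cosmetic: where the paper cites the convexity of $T_g^+$ and the absence of elements strictly between $T_g^-$ and $T_g^+$ (Propositions \ref{tglookslikeotype} and \ref{Ggstrictconvex}), you re-derive that step directly from Lemmas \ref{otypeequiv} and \ref{apartenance}, which is a legitimate application since their hypotheses force the reference element to be $o^+$-type.
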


 \begin{proof}
  We already showed that $G_g\cup T_g^-$ is an initial segment. Since $T_g^+$ is convex and since there is no element
  strictly contained between $T_g^-$ and $T_g^+$, it follows that $G^g=G_g\cup T_g^-\cup T_g^+$ is an initial segment.
  
 \end{proof}

 We can now show Proposition \ref{otypequotient}:

 \begin{proof}[proof of \ref{otypequotient}]
 Let $h_1,h_2\in G_g$. We have $h_1\precsim \ig$ and $\{h_2,\ih_2\}\precsim \ig\precnsim g$, so 
  we can apply Propositions \ref{addition} and \ref{toutesequivalences} and get
 $h_1\ih_2\precsim \ig\ih_2\sim \ig$. By a similar argument, we also have 
  $h_2\ih_1\precsim \ig$, hence $h_1\ih_2\in G_g$. This proves that $G_g$ is a subgroup of $G$. Now let us show that $G^g$ is a subgroup of $G$. 
Note that by Propositions \ref{tglookslikeotype}, \ref{Ggstrictconvex} and \ref{Gginitialsegment} we have 
  $G_g\precsim T_g^-\precnsim T_g^+$. Since $G^g$ is moreover an initial segment of $G^g$, it follows that an element $h\in G$ is in $G^g$ if and only if there exists $f\in T_g^+$ with 
  $h\precsim f$. 
  Let $h_1,h_2\in G^g$. There exists $f\in T_g^+$ with $\{h_1,h_2\}\precsim f$.  
  Assume $h_2\sim f$. By convexity of $T_g^+$, this implies that $h_2\in T_g^+$. We then have 
  $h_1\precsim h_2$. By $(CQ_2)$, this implies $h_1\ih_2\precsim \ih_2\precnsim h_2\in T_g^+$, hence $h_1\ih_2\in G^g$.  
  Assume $h_2\precnsim f$. By Proposition \ref{addition}, this implies $h_1\ih_2\precsim f\ih_2$. If $f\ih_2\precsim f^{-1}$,
  then $h_1\ih_2\precsim f\in T_g^+$, hence $h_1\ih_2\in G^g$. Assume then that 
  $f^{-1}\precnsim f\ih_2$.
  By Lemma \ref{lemfond} (i), $h_2\precnsim f$ implies  $f^{-1}\sim h_2f^{-1}$, hence $h_2f^{-1}\precnsim f\ih_2$ which means that $f\ih_2$ is $o^+$-type. Since $f\in T_g^+$, we have $f^{-1}\sim \ig$ by Lemma \ref{apartenance}. 
  We thus have $\ig\sim h_2f^{-1}$ and $f\ih_2$ is $o^+$-type. By Lemma \ref{apartenance}, we then have $f\ih_2\in T_g^+$. Since $h_1\ih_2\precsim f\ih_2$, it follows that $h_1\ih_2\in G^g$. This proves that $G^g$ is a 
subgroup of $G$.
  Now let us show that $G_g$ is normal in $G^g$. Let $h\in G_g$ and $z\in G^g$.
  By $(CQ_3)$, we have $\{(\ih)^z,h^z\}\precsim (\ig)^z$. It is enough to show that 
  $(\ig)^z\precsim\ig$. Note that by $(CQ_3)$, conjugation preserves types, so $(\ig)^z$ is $o^-$-type.
  Since $G^g$ is a group, we have $(\ig)^z\in G^g$, and since there is no $o^-$-type element above $\ig$ in $G^g$ we must
  have $(\ig)^z\precsim\ig$.

   Now let us prove that the q.o induced on $G^g/G_g$ is order-type. Set $G^+:=T_g^+/G_g$ and $G^-=T_g^-/G_g$. Clearly, $G^-=(G^+)^{-1}$.   
   Remember that, if $f\notin G_g$, then
   $h\precsim f\Leftrightarrow hG_g\precsim fG_g$. Since $T_g^-\precnsim T_g^+$, we have
   $G^-\precnsim G^+$.
   Let $fG_g,hG_g\in G^-$. Since $\precsim$ is trivial on $T_g^-$, we have 
   $h\sim f$, which implies $hG_g\sim fG_g$.   
  Moreover, Lemma \ref{ordretgplus} implies that $\precsim$ is an order on $G^+$.    
   By proposition \ref{ordertypegroup}, $\precsim$ is order-type on $G^g/G_g$.   
   Now assume that $H\varsubsetneq G_g$ is another strictly convex normal subgroup of $G^g$. Take 
    $h\in G_g\backslash H$. If $G_g$ is convex, then we have $h\precnsim T_g^-$, so 
    $H\precnsim hH\precnsim G^-$. By Proposition \ref{ordertypegroup}, it follows that  $\precsim$ on $G^g/H$ cannot be order-type.
    If $G_g$ is not convex, then we can choose $h\in F_g$ so $h$ is v-type and so is 
    $hH$, so $\precsim$ on $G^g/H$ is also not order-type.    
 \end{proof}

 \subsubsection{ $T_g$ in the v-type case}
 
   Assume now that $g\neq1$ is v-type.
   
   \begin{Lem}
    $g\in T_g$. 
   \end{Lem}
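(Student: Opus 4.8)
The plan is to unwind the definition of $T_g$ in the v-type case and reduce the claim $g\in T_g$ to a statement about the equivalence class of $g$. Since $g$ is v-type and $g\neq1$, membership $g\in T_g$ holds precisely when no $o^+$-type element lies between $g$ and $g$. By the definition of ``between'', an element $c$ lies between $g$ and $g$ if and only if $g\precsim c\precsim g$, i.e. $c\sim g$. Hence the only thing left to prove is that no $o^+$-type element belongs to $cl(g)$.

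I would argue by contradiction. Suppose some $o^+$-type element $c$ satisfies $c\sim g$. Being $o^+$-type means $c^{-1}\precnsim c$, and combined with $c\sim g$ this yields $c^{-1}\precnsim g$. Now I invoke the v-type half of Proposition \ref{toutesequivalences}: since $g$ is v-type and $c^{-1}\precnsim g$, the proposition gives $(c^{-1})^{-1}\precnsim g$, that is $c\precnsim g$. This contradicts $c\sim g$, so no such $c$ can exist, and therefore $g\in T_g$.

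The only points to watch are the endpoint $c=g$ and the element $1$, but both are harmless: $g$ itself is v-type rather than $o^+$-type, and $1$ is never $o^+$-type. I do not expect any real obstacle, since the whole argument reduces to a single application of the implication ``$h\precnsim g\Rightarrow\ih\precnsim g$'' furnished by Proposition \ref{toutesequivalences}, whose role is exactly to transport the inequality $c^{-1}\precnsim g$ to its inverse. This also explains why the definition of $T_g$ restricts to $o^+$-type elements in the v-type case: as Example \ref{componentsexample} illustrates, an $o^-$-type welding point can be equivalent to $g$, so treating $o^-$-type elements as lying ``between'' would wrongly expel $g$ from its own type-component.
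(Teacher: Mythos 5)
Your proof is correct and takes essentially the same route as the paper: both reduce $g\in T_g$ to the claim that no $o^+$-type element is equivalent to $g$, and then refute the existence of such an element. The paper dispatches this last step by citing Lemma \ref{otypeequiv} (which would force $g$ itself to be $o^+$-type), whereas you use the inverse-closure property of Proposition \ref{toutesequivalences} to get $c\precnsim g$ against $c\sim g$; both are valid one-line applications of previously established facts.
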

  \begin{proof}
   All we have to check is that there is no $o^+$-type element equivalent to $g$.
   This is given by Lemma \ref{otypeequiv}   
  \end{proof}

\begin{Lem}
 Let $h$ be o-type. Then either $h\precnsim T_g$ or $T_g\precsim h$.
\end{Lem}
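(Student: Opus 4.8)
The plan is to argue by contradiction, using the defining property of $T_g$ in the v-type case: for every $t\in T_g$ there is no $o^+$-type element between $t$ and $g$. First I dispose of the trivial case $h=1$: by $(CQ_1)$ we have $1\precnsim x$ for every $x\neq1$, and since $T_g$ consists of elements $\neq1$ this already yields $h\precnsim T_g$. So I may assume $h\neq1$, hence $h$ is either $o^+$-type or $o^-$-type.

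Suppose the conclusion fails, so that neither $h\precnsim T_g$ nor $T_g\precsim h$ holds. Since $\precsim$ is total, the failure of $h\precnsim T_g$ produces some $t_1\in T_g$ with $t_1\precsim h$, and the failure of $T_g\precsim h$ produces some $t_2\in T_g$ with $h\precnsim t_2$. I then split on the position of $g$ relative to $h$, which is again legitimate by totality.

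In the case $g\precsim h$ I work with $t_2$, so that $g\precsim h\precnsim t_2$. If $h$ is $o^+$-type this exhibits an $o^+$-type element between $t_2$ and $g$, contradicting $t_2\in T_g$. If $h$ is $o^-$-type I pass to $\ih$, which is $o^+$-type and satisfies $h\precnsim\ih$, and split on the position of $\ih$: if $\ih\precsim t_2$ then $g\precsim\ih\precsim t_2$ again exhibits an $o^+$-type element between $t_2$ and $g$; if $t_2\precnsim\ih$ then $h\precnsim t_2\precsim\ih$, and Lemma \ref{otypeequiv}, applied with $\ih$ in the role of the distinguished element ``$g$'', forces $t_2$ to be $o^+$-type, contradicting that $t_2\in T_g$ is v-type. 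The case $h\precnsim g$ is handled symmetrically using $t_1$, where now $t_1\precsim h\precnsim g$: the $o^+$-type subcase is immediate, and in the $o^-$-type subcase I compare $\ih$ with $g$, one branch yielding $t_1\precsim\ih\precsim g$ (an $o^+$-type element between $t_1$ and $g$) and the other branch yielding $h\precnsim g\precsim\ih$, whence Lemma \ref{otypeequiv} makes $g$ itself $o^+$-type, contradicting that $g$ is v-type.

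The routine part is the bookkeeping of strict versus non-strict inequalities and the checking of the (non-strict) betweenness condition from the definition of $T_g$. The one genuinely delicate point, which I expect to be the main obstacle, is the $o^-$-type subcase in which the $o^+$-type partner $\ih$ overshoots the relevant bound: there the contradiction does not come directly from the definition of $T_g$, but from invoking Lemma \ref{otypeequiv} with $\ih$ (respectively $g$) playing the role of the lemma's distinguished element, thereby turning a supposedly v-type element into an $o^+$-type one.
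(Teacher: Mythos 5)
Your proof is correct, and it takes a genuinely different route from the paper's. The paper disposes of this lemma in two lines by invoking Proposition \ref{tgotype}: the type-component $T_h$ of the o-type element $h$ is right-convex, contains $h$, and consists only of o-type elements, hence is disjoint from the purely v-type set $T_g$ --- ``hence the claim'', with the remaining bookkeeping left to the reader. You never mention $T_h$ at all; instead you argue by contradiction directly from the defining property of $T_g$ (no $o^+$-type element between a member of $T_g$ and $g$) together with Lemma \ref{otypeequiv}. The two arguments are cousins, since Proposition \ref{tgotype} is itself proved via Lemmas \ref{otypeequiv} and \ref{apartenance}, but yours is more elementary and self-contained, and your case analysis is exhaustive: the split $g\precsim h$ versus $h\precnsim g$ covers everything by totality, the witnesses $t_1\precsim h$ and $h\precnsim t_2$ are correctly extracted from the negation of the conclusion, and each branch genuinely produces either an $o^+$-type element between some $t_i$ and $g$ or an application of Lemma \ref{otypeequiv} turning a v-type element into an $o^+$-type one. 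You also correctly isolate the one delicate configuration --- an $o^-$-type $h$ whose inverse overshoots the relevant bound, which is exactly the welding situation $h\sim g$ of Example \ref{examplesfondamentales}(b) --- that the paper's one-liner glosses over. What the paper's version buys is brevity and a uniform appeal to the already-established structure of o-type components; what yours buys is a fully verifiable argument that does not depend on Proposition \ref{tgotype}.
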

\begin{proof}

 By Proposition \ref{tgotype}, $T_h$ is right-convex and contains $h$. Moreover, $T_h$ only contains $o$-type elements and $T_g$ only 
 contains $v$-type elements, hence the claim.
\end{proof}

  \begin{Prop}\label{tgvtypeconvex}
   Define $F_g:=\{h\quad o^-\text{-type }\mid h\sim \max(T_g)\}$ if $\max T_g\neq\varnothing$ and $F_g:=\varnothing$ otherwise.
   Then $T_g$ is left-convex with convexity complement $F_g$. In particular, $T_g$ is convex if 
it has no maximum.
  \end{Prop}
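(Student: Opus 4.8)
The plan is to establish left-convexity of $T_g$ by a direct argument, then to read off the convexity complement from Lemma \ref{strictconvexity} and check that it coincides with $F_g$. The guiding picture is that $T_g$ is an interval of v-type elements whose top class $cl(M)$ (when a maximum $M$ exists) may, because of welding, also contain $o^-$-type elements, which are precisely the ones one must adjoin to make it convex.

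First I would prove left-convexity: suppose $s,t\in T_g$ and $s\precsim a\precnsim t$, and show $a\in T_g$. From $1\precnsim s\precsim a$ and $(CQ_1)$ we get $a\neq 1$. Next, $a$ must be v-type, for if $a$ were o-type the preceding lemma would give $a\precnsim T_g$ or $T_g\precsim a$, and each alternative contradicts $s\precsim a\precnsim t$ with $s,t\in T_g$. Finally there is no $o^+$-type element between $a$ and $g$: if $b$ were such an element, then in the case $a\precsim b\precsim g$ we would have $s\precsim b\precsim g$, an $o^+$-type element between $s$ and $g$ contradicting $s\in T_g$; and in the case $g\precsim b\precsim a$ we would have $g\precsim b\precsim t$, contradicting $t\in T_g$. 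Hence $a\in T_g$, so $T_g$ is left-convex and in particular strictly convex.

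Now Lemma \ref{strictconvexity} applies. If $\max(T_g)=\varnothing$, I would argue $T_g$ is convex: given $s\precsim a\precsim t$ with $s,t\in T_g$, the absence of a maximum provides $t'\in T_g$ with $t\precnsim t'$, so $s\precsim a\precnsim t'$ and left-convexity yields $a\in T_g$; thus the convexity complement is $\varnothing=F_g$, which also proves the last sentence of the proposition. If $\max(T_g)\neq\varnothing$, fix $M\in\max(T_g)$; the key computation is $cl(M)\backslash T_g=F_g$. Indeed any $h\sim M$ satisfies $h\neq 1$; if $h$ is v-type then, because ``$b$ between $h$ and $g$'' and ``$b$ between $M$ and $g$'' are the same condition (as $h\sim M$), $h$ inherits from $M$ the absence of an $o^+$-type element between it and $g$, so $h\in T_g$; and $h$ cannot be $o^+$-type, since then $g\precsim h\precsim M$ would exhibit an $o^+$-type element between $g$ and $M$, contradicting $M\in T_g$. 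Hence the elements of $cl(M)$ outside $T_g$ are exactly the $o^-$-type ones, i.e. $F_g$.

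To finish I would check that $T_g\cup cl(M)$ is convex, which places us in case (iii) of Lemma \ref{strictconvexity} and identifies the convexity complement as $cl(M)\backslash T_g=F_g$. For this, take $s\precsim a\precsim t$ with $s,t\in T_g\cup cl(M)$; since every element of $T_g\cup cl(M)$ is $\precsim M$, we get $a\precsim M$. If $a\sim M$ then $a\in cl(M)$; otherwise $a\precnsim M$, and then $s\precsim a\precnsim M$ together with the fact that every element of $F_g$ is $\sim M$ forces $s\in T_g$, so left-convexity gives $a\in T_g$. I expect the main obstacle to be exactly this bookkeeping around the top class $cl(M)$: handling welding correctly, namely that $cl(M)$ mixes the v-type elements of $T_g$ with the $o^-$-type elements of $F_g$ while containing no $o^+$-type elements, is where convexity (as opposed to mere strict convexity) can fail and where the definition of $F_g$ must be matched precisely.
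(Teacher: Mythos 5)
Your proof is correct and follows essentially the same route as the paper's: the heart of both arguments is the type trichotomy for an element lying between two elements of $T_g$ (v-type forces membership in $T_g$, $o^+$-type is impossible since it would lie between $g$ and an element of $T_g$, and $o^-$-type forces equivalence to $\max(T_g)$ via the preceding lemma). The only difference is organizational — you establish left-convexity directly and then compute $cl(M)\backslash T_g$ explicitly, whereas the paper argues contrapositively from non-convexity — and your version is, if anything, slightly more careful in matching the convexity complement of Lemma \ref{strictconvexity}(iii) with the set $F_g$ as defined.
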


  \begin{proof}
  Assume $T_g$ is not convex. Then there exists $h_1,h_2\in T_g$ and $f\notin T_g$ such that 
   $h_1\precsim f\precsim h_2$. If $f$ were $v$-type, then since $f\notin T_g$ there would an 
  $o^+$-type element between $g$ and $f$. This would imply that there is an $o^+$-type element either between 
  $g$ and $h_1$ or between $g$ and $h_2$, which is a contradiction. For the same reason $f$ cannot be $o^+$-type.  
  Thus, $f$ is $o^-$-type. It follows from the previous lemma that $T_g\precsim f$, so $h_2\precsim f$, hence
  $h_2\sim f$. It follows that $h_2\in \max(T_g)$. 
  Now let us show that $T_g\cup cl(h_2)$ is convex. Let $f_1,f_2\in T_g\cup cl(h_2)$ and $f_1\precsim f\precsim f_2$. With the same
  reasoning as above, $f$ cannot be $o^+$-type so it must either be v-type or $o^-$-type. If it is v-type, then 
  $f\in T_g$. If it is $o^-$-type, then $f\sim h_2$.
  \end{proof}

  We can now state a v-type analogue of Proposition \ref{tgotype}:
  
  \begin{Prop}\label{tgvtype}
   The set $T_g$ is the biggest  strictly convex subset of $G\backslash\{1\}$ containing $g$ such that every element of $T_g$ is v-type.
   If $G$ has no welding at $g$, then $T_g$ is even convex.
  \end{Prop}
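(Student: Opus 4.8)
The statement has two halves, and the three defining features of $T_g$ come essentially for free. That $g\in T_g$ is the first lemma of this subsection; that every element of $T_g$ is v-type and $\neq 1$ is built into the definition of $T_g$; and strict convexity is immediate from Proposition \ref{tgvtypeconvex}, since left-convex sets are strictly convex. So the real content is (1) maximality among strictly convex subsets of $G\backslash\{1\}$ all of whose elements are v-type, and (2) convexity under the no-welding hypothesis.

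For maximality, let $S\subseteq G\backslash\{1\}$ be strictly convex with $g\in S$ and every element of $S$ v-type; the plan is to show $S\subseteq T_g$. Fix $h\in S$. Since $h$ is v-type and $h\neq 1$, it suffices to exclude any $o^+$-type $c$ lying between $h$ and $g$; say $h\precsim c\precsim g$ (the other orientation is symmetric). If $h\precnsim c\precnsim g$, then strict convexity of $S$ with endpoints $h,g\in S$ forces $c\in S$, contradicting that $S$ contains only v-type elements. Otherwise $c\sim h$ or $c\sim g$, i.e.\ $c$ is an $o^+$-type element equivalent to a v-type one. This cannot happen: if $c$ is $o^+$-type then $c^{-1}\precnsim c$, and if $c\sim v$ with $v$ v-type then $c^{-1}\precnsim v\precsim c$, so Lemma \ref{otypeequiv} (taking $c$ for $g$ and $v$ for $h$) makes $v$ be $o^+$-type, a contradiction. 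This is the same observation that proves $g\in T_g$. Hence no $o^+$-type element is between $h$ and $g$, so $h\in T_g$ and $S\subseteq T_g$.

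For convexity, Proposition \ref{tgvtypeconvex} does the essential work: $T_g$ is left-convex with convexity complement $F_g$, hence convex exactly when $F_g=\varnothing$. It therefore remains to see that absence of welding forces $F_g=\varnothing$. By definition $F_g\neq\varnothing$ means $\max(T_g)\neq\varnothing$ and some $o^-$-type $h$ satisfies $h\sim M$ for an $M\in\max(T_g)$; as $M\in T_g$ is v-type while $h$ is o-type, this is exactly a welding occurring at the maximal class of the component $T_g$, mirroring the o-type situation of Proposition \ref{tgotype}(b), where non-convexity is witnessed by a welding point in the class $cl(\ig)=\min(T_g)$. Thus convexity of $T_g$ is equivalent to the absence of such a boundary welding. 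The one point that needs care, and which I expect to be the main obstacle, is transferring this welding at $\max(T_g)$ to the prescribed element $g$ itself; the natural tools are the arithmetic results of Section \ref{relationsection} (Lemma \ref{lemfond} and Propositions \ref{toutesequivalences} and \ref{addition}), used to control how the group law interacts with the coarsening produced by welding, exploiting that $g\precsim M\sim h$ and that $\ih$ is an $o^+$-type element sitting above $M$.
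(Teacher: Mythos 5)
Your maximality argument is correct and is essentially the paper's own proof read contrapositively: the paper takes $S\varsupsetneq T_g$ strictly convex, picks a v-type $h\in S\backslash T_g$, produces an $o^+$-type $f$ between $g$ and $h$, and uses Lemma \ref{otypeequiv} to see $f\nsim g$ and $f\nsim h$, so that $f$ is \emph{strictly} between them and strict convexity forces $f\in S$. That is exactly your case analysis ($c$ strictly between versus $c\sim h$ or $c\sim g$), so on this point the two proofs coincide.

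On the convexity claim the paper's proof is silent -- it is implicitly delegated to Proposition \ref{tgvtypeconvex} -- and your reduction to $F_g=\varnothing$ is the right one. But the ``main obstacle'' you flag at the end, namely transferring a welding at $\max(T_g)$ to the element $g$ itself, is not something that Lemma \ref{lemfond} or Propositions \ref{toutesequivalences} and \ref{addition} will resolve: if $g\notin\max(T_g)$ (which can happen when the valuational component $T_g$ has more than one equivalence class), an $o^-$-type element equivalent to $\max(T_g)$ simply is not equivalent to $g$, and no amount of group arithmetic will make $g$ a welding point. What you have actually uncovered is that the hypothesis ``no welding at $g$'' must be read as ``no welding point in $T_g$'' (or globally ``$G$ has no welding point'', which is the formulation used in the Remark following Proposition \ref{vtypequotient}); under that reading the convexity claim is immediate from Proposition \ref{tgvtypeconvex} with nothing left to prove. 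So do not go looking for the missing arithmetic step -- state the hypothesis in the broader form and stop there.
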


  \begin{proof}
   Let $S\varsupsetneq T_g$ be strictly convex and let $h\in S\backslash T_g$ with $h\neq1$ be v-type. 
   Since $h\notin T_g$, then by definition of $T_g$ there must be an $o^+$-type element $f$ between
   $g$ and $h$.  By Lemma \ref{otypeequiv}, we have $f\nsim h$ and $f\nsim g$, so 
   $f$ is strictly between $g$ and $h$, hence $f\in S$. Thus, $S$ must contain o-type elements.
  \end{proof}

   We now want to establish the v-type analogue of Proposition \ref{otypequotient}.
  
  \begin{Prop}\label{Ggvtypecase}
  Both $G^g$ and $G_g$ are  subgroups of $G$,
   $G^g$ is  strictly convex with convexity complement $F_g$ and
   $G_g$ is convex. Moreover, $G_g$ is normal in $G^g$.  
  \end{Prop}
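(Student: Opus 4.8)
The plan is to mirror the o-type case (Proposition \ref{otypequotient}), replacing the order-type behaviour of $T_g$ by the valuational-like behaviour forced by $g$ being v-type. I would organise the argument into four steps: convexity of $G_g$, strict convexity of $G^g$, the two subgroup statements, and finally normality.

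First I would record the easy convexity facts. Since every $t\in T_g$ is v-type, Proposition \ref{toutesequivalences} gives $h\precnsim t\Rightarrow\ih\precnsim t$, so $G_g=\{h\mid h\precnsim T_g\}$ is closed under inverses; it is moreover an initial segment (if $k\precsim h\precnsim t$ then $k\precnsim t$), hence convex and containing $1$. For $G^g=G_g\cup T_g$ I would use that $T_g$ is left-convex with convexity complement $F_g$ (Proposition \ref{tgvtypeconvex}) together with $G_g\precnsim T_g$: a short case analysis on whether the two endpoints lie in $G_g$ or in $T_g$ shows that $G^g$ is strictly convex. To pin down the convexity complement I would take $M\in\max(T_g)=\max(G^g)$ (when it exists) and check that every element of $cl(M)$ is either v-type, hence in $T_g$, or $o^-$-type, hence in $F_g$ (an $o^+$-type element $\sim M$ would lie between $g$ and $M$, contradicting $M\in T_g$); thus $cl(M)\setminus G^g=F_g$ and Lemma \ref{strictconvexity}(iii) identifies $F_g$ as the convexity complement, with $F_g=\varnothing$ and $G^g$ convex when $\max(T_g)=\varnothing$.

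The heart of the proof is the valuational-like inequality on $T_g$: for $a,b\in T_g$ with $a\precsim b$ one has $ab\precsim b$. I would prove this uniformly by observing that, since $b$ is v-type, $\ib\sim b\succsim a$, whence $a\precsim\ib$, and then $(CQ_2)$ turns $a\precsim\ib$ into $ab\precsim b$; this single trick covers the strict and the equivalence cases at once. From it, together with Proposition \ref{toutesequivalences}, I would deduce that $G^g$ is closed under multiplication: products of two $T_g$-elements stay $\precsim\max(T_g)$ and, using the same inequality for the inverses, cannot land in the $o^-$-type set $F_g$, so they remain in $G^g$, while the mixed $G_g$/$T_g$ cases follow from $ab\sim\max(a,b)$. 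For $G_g$ I would take $h_1,h_2\in G_g$ and fix $t\in T_g$; Proposition \ref{addition}, whose hypothesis $t\nsim\ih_2$ holds because $\ih_2\precnsim t$, yields $h_1\ih_2\precsim t\ih_2\sim t$. I expect the main obstacle to be upgrading this to the strict inequality $h_1\ih_2\precnsim t$: I would argue by contradiction, applying Proposition \ref{addition} a second time to $t\ih_2\precsim h_1\ih_2$ and multiplying on the right by $h_2$ to recover $t\precsim h_1$, contradicting $h_1\precnsim t$. Running this for every $t\in T_g$ gives $h_1\ih_2\in G_g$, so $G_g$ is a subgroup.

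Finally, for normality I would use that conjugation by any $z\in G^g$ is, by $(CQ_3)$, a $\precsim$-preserving and type-preserving automorphism of $G$ which maps the subgroup $G^g$ onto itself. The key observation is that $T_g$ is precisely the top component of $G^g$, since $G_g\precnsim T_g$; being the maximal v-type, $o^+$-free region of $G^g$, it is preserved setwise by any such automorphism, so $T_g^z=T_g$ for all $z\in G^g$. Given $h\in G_g$ and $z\in G^g$, this yields $h^z\precnsim t$ for every $t\in T_g$ (because $h\precnsim T_g=T_g^{\iz}$), so $h^z\in G_g$. I expect this invariance of $T_g$ under the inner automorphisms of $G^g$ — rather than under all of $G$, where the class of $g$ need not be preserved (Remark \ref{propval}(c)) — to be the subtle point that makes normality go through.
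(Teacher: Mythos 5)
Your proof is correct and follows the same overall decomposition as the paper's (convexity of $G_g$, strict convexity of $G^g$ via the left-convexity of $T_g$, the two subgroup claims via Propositions \ref{addition} and \ref{toutesequivalences}, then normality), but two steps are handled differently. For closure of $G^g$ under products you isolate the uniform inequality ``$a\precsim b$ with $b$ v-type implies $a\precsim \ib$, hence $ab\precsim b$ by $(CQ_2)$,'' which covers in one stroke the two cases $h\sim f_2$ and $h\nsim f_2$ that the paper treats separately; this is a genuine simplification, and your exclusion of $F_g$ via the inverse product is exactly the paper's. For $G_g$ you prove $h_1\ih_2\precsim t$ first and then rule out equivalence by a second application of Proposition \ref{addition}, whereas the paper assumes $t\precsim h_1\ih_2$ and derives a contradiction directly; the two are logically equivalent and both hypotheses of Proposition \ref{addition} check out (note only that the relevant condition for right multiplication by $\ih_2$ is $t\nsim h_2$, not $t\nsim\ih_2$ --- both hold here, so this is harmless).

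The one place you should tighten is normality. The paper argues pointwise: for $1\neq h\in G_g$ it takes the $o^+$-type witness $f$ with $h\precsim f\precsim g$, conjugates it, and uses that $f^z$ is an $o^+$-type element of the group $G^g$, hence lies in $G_g$ and sits above $h^z$. You instead assert the stronger statement $T_g^z=T_g$ for $z\in G^g$ on the grounds that $T_g$ is ``the maximal v-type, $o^+$-free region of $G^g$.'' As written this is not yet a proof: ``maximal strictly convex set of v-type elements'' does not single out $T_g$ inside $G^g$ (there may be many lower v-type components, as in Example \ref{examplesfondamentales}(d)), and $T_g=G^g\setminus G_g$ is circular. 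To make it work you must write down a characterization of $T_g$ that is manifestly invariant under $\precsim$- and type-preserving bijections of $G^g$ fixing $1$, e.g.\ $T_g=\{h\in G^g\setminus\{1\}\mid h \text{ v-type and there is no } o^+\text{-type } f\in G^g \text{ with } h\precsim f\}$, and verify both inclusions; the nontrivial inclusion is exactly the existence of the $o^+$-type witness between $h$ and $g$ for v-type $h\in G_g$, i.e.\ the same fact the paper conjugates directly. So your route is completable and correctly identifies that only conjugation by elements of $G^g$ (which preserves $G^g$ setwise, since $G^g$ has already been shown to be a subgroup) is at stake, but in the end it rests on the same witness argument, packaged as an invariance statement rather than applied pointwise.
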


  \begin{proof}
    $G_g$ is clearly an initial segment by definition, so it is convex.
   Moreover, we know that $T_g$ is left-convex and that there is no element strictly contained
   between $G_g$ and $T_g$, so it follows immediately that $G^g=G_g\cup T_g$ is left-convex. We also know 
   that $F_g$ is the convexity complement of $T_g$ and so it is also the convexity complement of $G^g$.
 
   Let us show that $G_g$ is a group.
   Let $f_1,f_2\in G_g$ and $h\in T_g$, so in particular $h$ is v-type.
   Assume $h\precsim f_1f^{-1}_2$. We then have $f_1\nsim f_1f^{-1}_2$. Applying Proposition 
    \ref{addition}, we get $f^{-1}_1h\precsim f^{-1}_2$. However, by Proposition \ref{toutesequivalences}, 
    we have $f^{-1}_2\precnsim h$ and $f^{-1}_1h\sim h$, so this is a contradiction. Thus, we must have 
    $f_1f_2^{-1}\precnsim h$. Since $h$ is arbitrary in $T_g$, this means $f_1f^{-1}_2\in G_g$.
   Now let us show that $G^g$ is a group. Let $f_1,f_2\in G^g$. This implies that there is $h\in T_g$ with $\{f_1,f_2\}\precsim h$. 
   If $h\sim f_2$, then $f_2\in T_g$, so $f_2$ is v-type and we have $f_1\precsim f_2$. By 
   $(CQ_2)$, it follows that $f_1f_2^{-1}\precsim f_2^{-1}\sim h\in T_g$. If $h\nsim f_2$, then $f_1\precsim h$ implies $f_1f_2^{-1}\precsim hf_2^{-1}$ by
   Proposition \ref{addition}. Since $h$ is v-type, $f_2\precnsim h$ implies $h\sim hf_2^{-1}$ by Proposition \ref{toutesequivalences}, hence $f_1f_2^{-1}\precsim  h$. In any case 
   we have $f_1f^{-1}_2\precsim h$, which means $f_1f^{-1}_2\in G^g\cup F_g$. We can  show with the same 
   reasoning that $f_2f_1^{-1}\precsim h$. This implies that $f_1f^{-1}_2\notin F_g$. 
   Indeed, if 
   $f_1f^{-1}_2$ were in $F_g$, then it would be $o^-$-type, so we would have $h\sim f_1f^{-1}_2\precnsim f_2f^{-1}_1$.
   
   Take $h\in G_g$ and $z\in T_g$. If $h\neq 1$, then  there exists an $o^+$-type element
   $f$ between $h$ and $g$. We then have
   $h^z\precsim f^z\precsim g^z\in G^g$, so there is an $o^+$-type element between $h^z$ and 
   $g^z$, hence $h^z\in G_g$.
  \end{proof}

  \begin{Prop}\label{vtypequotient}
   The group $G_g$ is the smallest normal convex subgroup of $G^g$ such that the quotient $G^g/G_g$ is valuational.
  \end{Prop}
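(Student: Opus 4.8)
The plan is to reduce everything to the characterization of valuational C-q.o's in Proposition \ref{vtypegroup}, which says that a C-q.o is valuational exactly when every element is v-type; the whole statement then becomes a computation of the \emph{types} of cosets. By Proposition \ref{Ggvtypecase} the subgroup $G_g$ is convex and normal in $G^g$, so Proposition \ref{quotientconvex} applies and $\precsim$ induces a genuine C-q.o on $G^g/H$ for every convex normal subgroup $H$ of $G^g$. Throughout I would read off coset types directly from the quotient formula $aH\precsim bH\Leftrightarrow(a\in H)\vee(b\notin H\wedge a\precsim b)$.

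First I would prove $G^g/G_g$ is valuational. By Proposition \ref{vtypegroup} it suffices to show every nontrivial coset is v-type. Such a coset has the form $fG_g$ with $f\in G^g\setminus G_g=T_g$, hence $f$ is v-type and $f\sim f^{-1}$ in $G$. As $G^g$ is a subgroup, $f^{-1}\in G^g$, and $f^{-1}\sim f\in T_g$ shows $f^{-1}\notin G_g$; the quotient formula then gives $fG_g\sim f^{-1}G_g$, so $fG_g$ is v-type. Proposition \ref{vtypegroup} yields that $G^g/G_g$ is valuational.

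For minimality, suppose $H\subsetneq G_g$ is a convex normal subgroup of $G^g$ with $G^g/H$ valuational, and pick $h\in G_g\setminus H$ (so $h\neq1$). If $h$ is o-type, then $h\nsim\ih$ with $h,\ih\notin H$, and the quotient formula gives $hH\nsim\ih H$, so $hH$ is o-type, contradicting Proposition \ref{vtypegroup}. If $h$ is v-type, then $h\in G_g$ gives $h\precnsim T_g$, hence $h\notin T_g$ and $h\precnsim g$; by the definition of $T_g$ there is an $o^+$-type element $f$ with $h\precsim f\precsim g$. I would check $f\in G^g$: if $f\precnsim g$, this follows from left-convexity of $G^g$ (Proposition \ref{Ggvtypecase}) applied to $1\precsim f\precnsim g$; if $f\sim g$, it follows because $G^g\cup F_g$ is convex and $f$, being $o^+$-type, cannot lie in $F_g$, which consists only of $o^-$-type elements. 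Since $H$ is a convex subgroup it is an initial segment of $G^g$ by $(CQ_1)$, so $h\notin H$ together with $h\precsim f$ forces $f\notin H$; thus $f\in G^g\setminus H$ is o-type and $fH$ is an o-type coset, again contradicting Proposition \ref{vtypegroup}. Hence $G_g\subseteq H$, which is the minimality.

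The main obstacle is the welding case $f\sim g$, where $f$ is an $o^+$-type element equivalent to the v-type element $g$: then $f$ need not obviously sit inside $G^g$, and one must invoke the precise description of the convexity complement $F_g$ from Proposition \ref{Ggvtypecase} (that it contains only $o^-$-type elements) to conclude $f\in G^g$ rather than $f\in F_g$. Everything else is a routine use of the quotient formula together with the facts, already established, that $G_g$ and $G^g$ are subgroups with $G_g$ convex and normal in $G^g$.
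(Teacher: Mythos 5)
Your proof is correct and takes essentially the same route as the paper: reduce everything to Proposition \ref{vtypegroup} by reading coset types off the quotient formula, and for minimality exhibit an o-type element of $G^g\setminus H$ (either $h$ itself or an $o^+$-type element between $h$ and $g$), which is exactly the detail the paper's one-line argument leaves implicit. The only remark worth adding is that your ``welding case'' $f\sim g$ is in fact vacuous, since by Lemma \ref{otypeequiv} no $o^+$-type element can be equivalent to the v-type element $g$ (this is precisely how the paper proves $g\in T_g$), though your handling of it via $F_g$ is also valid.
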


   \begin{proof}
    Remember that for any $f,h\notin G_g$, $f\precsim h$ if and only if $fG_g\precsim hG_g$. Since every element of $T_g$
    is v-type, it follows that every element of $G^g/G_g$ is also v-type, so the q.o is valuational.    
    If $H$ is strictly contained in $G_g$, then $G^g\backslash H$ contains an $o$-type element $h$, and then 
    $hH$ is o-type.
   \end{proof}

   \begin{Rem}
   \begin{enumerate}
    \item As happens in the o-type case, welding is the only thing preventing $T_g$ and $G^g$ from being convex. If 
    $G$ has no welding point, then we can replace ``strictly convex'' by ``convex'' in Propositions 
    \ref{tgvtype} and \ref{vtypequotient}.
    \item In the o-type case as well as in the v-type case, it can happen that $G_g$ and $G^g$ are not normal in $G$
    (see Example \ref{exempletypeval} below).    
    \end{enumerate}
    
   \end{Rem}

   \subsubsection{Type-valuation}
   
    We can now show that the $T_g$'s form a partition of $G$:
 
 \begin{Prop}\label{Tgpartition}
  The following  holds for any $g,h\in G$:\newline
  $g\in T_h\Leftrightarrow h\in T_g\Leftrightarrow T_g=T_h\Leftrightarrow T_g\cap T_h\neq\varnothing\Leftrightarrow  
  G_g=G_h\Leftrightarrow G^g=G^h$.
 \end{Prop}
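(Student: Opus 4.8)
The plan is to reduce the six-fold biconditional to a single cycle of implications. Write $A,B,C,D,E,F$ for the statements $g\in T_h$, $h\in T_g$, $T_g=T_h$, $T_g\cap T_h\neq\varnothing$, $G_g=G_h$ and $G^g=G^h$. The cases $g=1$ or $h=1$ are immediate from $T_1=G_1=G^1=\{1\}$ together with the fact that $1\notin T_k$ for $k\neq1$, so I assume $g,h\neq1$ throughout. The whole argument rests on one absorption lemma: \emph{if $f\in T_g$ with $f\neq1$, then $T_f=T_g$, $G_f=G_g$ and $G^f=G^g$.} Combined with the already established fact that $g\in T_g$ for every $g$, this lemma yields at once the equivalence of $A,B,C,D$: the implication $C\Rightarrow A,B,D$ is trivial since $g\in T_g$, while $A\Rightarrow C$ and $B\Rightarrow C$ are the lemma applied to $g\in T_h$ and $h\in T_g$, and $D\Rightarrow C$ follows by choosing $f\in T_g\cap T_h$ and using $T_f=T_g$ and $T_f=T_h$. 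The lemma also gives $C\Rightarrow E$ and $C\Rightarrow F$, since $T_g=T_h$ forces $h\in T_g$, whence $G_h=G_g$ and $G^h=G^g$.

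I would prove the absorption lemma by cases on the type of $g$. If $g$ is $o^+$-type and $f\in T_g^+$, Lemma \ref{apartenance} characterizes $T_g^+$ as the set of $o^+$-type $h$ with $\ih\sim\ig$; since $f\in T_g^+$ gives $f^{-1}\sim\ig$, applying the same characterization with $f$ in place of $g$ shows $T_f^+=T_g^+$, hence $T_f=T_g$, and the case $f\in T_g^-$ reduces to this via $T_f=T_{f^{-1}}$. Then $G_f=G_g$ because $G_g=\{h\mid\{h,\ih\}\precsim\ig\}$ depends on $g$ only through $cl(\ig)$ and $f^{-1}\sim\ig$, and $G^f=G_f\cup T_f=G_g\cup T_g=G^g$. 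If $g$ is v-type and $f\in T_g$, then $f$ is v-type, and membership in $T_g$ is governed by the absence of $o^+$-type elements in a closed interval; a short interval argument (using that no $o^+$-type element is $\sim$ to a v-type element, so that the interval between $cl(h)$ and $cl(f)$ is contained in the union of the intervals from $cl(h)$ to $cl(g)$ and from $cl(f)$ to $cl(g)$, and hence contains no $o^+$-type element) shows $h\in T_g\Leftrightarrow h\in T_f$, i.e.\ $T_f=T_g$; then $G_f=G_g$ from $G_g=\{h\mid h\precnsim T_g\}$ and $G^f=G^g$.

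It remains to close the cycle by returning from $E$ and $F$ to $C$, which I would do through $F\Rightarrow E\Rightarrow C$. For $E\Rightarrow C$ I recover $T_g$ from $G_g$: in the $o$-type case $G_g=\{h\precnsim\ig\}\cup F_g$ by Proposition \ref{Ggstrictconvex}, so $cl(\ig)$ (equivalently $T_g^-$) is recovered from $G_g$ via its convexity complement together with the minimal class lying above it, after which $T_g^+=\{h\mid h\ o^+\text{-type},\ \ih\sim\ig\}$ is determined; in the v-type case $G_g$ is convex and $T_g$ is the maximal strictly convex set of v-type elements sitting immediately above $G_g$. In both cases $T_g$ is a function of $G_g$, so $G_g=G_h$ yields $T_g=T_h$, once one checks that $G_g=G_h$ forces $g$ and $h$ to share a type (the type of the class immediately above $G_g$ is intrinsic to $G_g$). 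For $F\Rightarrow E$ I use $G^g=G_g\sqcup T_g$ with $T_g$ the top part, read off the type of $g$ from the maximal-type elements of $G^g$ so that $G^g=G^h$ again forces $g,h$ to share a type, and then invoke the minimality characterizations of Propositions \ref{otypequotient} and \ref{vtypequotient}: $G_g$ and $G_h$ are both the smallest normal strictly convex subgroup of this common group whose quotient is of elementary type, whence $G_g=G_h$.

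I expect the main obstacle to be precisely this last step, namely extracting $T_g$ (and $G_g$) intrinsically from $G_g$ or $G^g$ and, above all, verifying that equality of the $G_g$'s or of the $G^g$'s forces $g$ and $h$ to be of the same type. The minimality statements of Propositions \ref{otypequotient} and \ref{vtypequotient} are phrased for a fixed type (order-type versus valuational), so I must rule out that one and the same group $G^g$ admits elementary-type quotients of both flavours; I would do this by showing that the type of the top elements of $G^g$ is determined by $G^g$ alone, which both pins down the type of $g$ and makes the minimal subgroup unique.
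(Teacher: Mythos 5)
Your treatment of the equivalences $g\in T_h\Leftrightarrow h\in T_g\Leftrightarrow T_g=T_h\Leftrightarrow T_g\cap T_h\neq\varnothing$ and of the implications $T_g=T_h\Rightarrow G_g=G_h$, $T_g=T_h\Rightarrow G^g=G^h$ is correct and close in spirit to the paper's: the paper derives the absorption property from the maximality statements of Propositions \ref{tgotype} and \ref{tgvtype}, while you derive it from the explicit characterization in Lemma \ref{apartenance} and an interval argument; both work. The genuine gap is exactly where you locate it: the return implications $G^g=G^h\Rightarrow G_g=G_h\Rightarrow T_g=T_h$ are only planned, and the plan as stated does not go through. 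Its pivot is that ``the type of the class immediately above $G_g$ is intrinsic to $G_g$'' and that ``the type of the top elements of $G^g$ is determined by $G^g$ alone''. Neither assertion is well-posed. Because of welding, a single $\sim$-class can contain both v-type and $o^-$-type elements (in Example \ref{examplesfondamentales}(b) the class of $(0,1)$ contains the v-type element $(0,1)$ and the $o^-$-type element $(-1,0)$), so a class does not have a type; and $\max(G^g)$ can be empty (Example \ref{examplesfondamentales}(a) with $g=(1,0)$), so ``top elements'' needs a definition. The same-type statement is in fact provable --- for instance, if $G^g=G^h$ with $g$ of $o^+$-type and $h$ v-type, then $h\in G^g\setminus T_g\subseteq G_g$ forces $h\precnsim g$, while $g\in G^h\setminus T_h\subseteq G_h$ forces $g\precnsim h$, a contradiction --- but some such argument must actually be supplied, and for $G_g=G_h$ it has to confront the welding set $F_g$ of Proposition \ref{Ggstrictconvex} explicitly.

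The paper closes the cycle by a much shorter route that makes both the intrinsic reconstruction of $T_g$ from $G_g$ and the same-type lemma unnecessary: from $G_g=G_h$ it takes without loss of generality $g\precsim h$, notes $g\notin G_h$, and uses that no element lies strictly between $G_h$ and $T_h$ (strict convexity of $G^h$) to conclude $g\in T_h$ directly; and from $G^g=G^h$ it deduces $T_g\cap T_h\neq\varnothing$ at once. I would recommend replacing your last step by that argument, or else carrying out in full the same-type claim and the recovery of $T_g$ from $G_g$, with the welding cases treated separately.
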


 \begin{proof}
 
  Assume $g\in T_h$. If $h,g$ are v-type, then
  we use Proposition \ref{tgvtype}.
   We know that $T_g$ is the biggest strictly convex subset of $G$ containing $g$ whose every
   element is v-type. Since  $T_h$ is strictly convex and only contains v-type elements and $g\in T_h$, it follows that 
   $T_h\subseteq T_g$. This implies $h\in T_g$. By a similar argument, it also follows that $T_g\subseteq T_h$, hence
   $T_g=T_h$.
   The case where they are o-type is similar by using Proposition \ref{tgotype}. This proves the first two
   equivalences. The third one follows immediately: if $T_g\cap T_h\neq\varnothing$, then there is 
   $f\in G$ with $f\in T_g\cap T_h$, which implies $T_g=T_f=T_h$.   
   Assume $T_g=T_h$. In the v-type case we obviously have $G_h=G_g$ by definition of $G_g$.
   If they are $o^+$-type, then $\ig\sim \ih$.  But then, for any $f\in G$, 
   $\{f,f^{-1}\}\precsim \ig$ is equivalent to $\{f,f^{-1}\}\precsim \ih$, hence $G_g=G_h$.
   Assume $G_g=G_h$. Without loss of generality $g\precsim h$. Since $G_g=G_h$, we have $g\notin G_h$. Note that there is no 
   element strictly contained between $G_h$ and $T_h$ (otherwise,
   there would be an element $f$ with $f\notin G^h=G_h\cup T_h$ and $1\precnsim f\precnsim h$. This would 
   contradict the fact that $G^h$ is strictly convex). Thus,
we  have $g\in T_h$, hence $T_h=T_g$, which also 
   implies $G^g=G^h$. Finally, assume $G^g=G^h$. By definition of $G^g,G^h$, this implies that 
   $T_h\cap T_g\neq\varnothing$, hence 
   $T_g=T_h$.
 \end{proof} 
 
  We have thus reached the goal we announced in the introduction: we showed that  $G$ is partitioned into a family of sets 
  on each of which the C-q.o is elementary-type-like. Our next objective is to reformulate this statement by 
  showing that $\precsim$ can be obtained by lifting elementary C-q.o's. To do this we need to define a valuation on $G$
  whose fibers are the type-components. We first notice that $\precsim$ naturally induces an order on the set of 
  type-components:
  
 \begin{Prop}\label{ordertypecomponents}
  Define $\leq$ on the set of all type-components by 
  $T_g\leq T_h\Leftrightarrow T_g=T_h\vee T_g\precsim T_h$. This is an order on the set of 
  all type-components of $G$.  
 \end{Prop}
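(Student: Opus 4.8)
The plan is to verify the four defining properties of a total order for the relation $\leq$, treating reflexivity and transitivity as routine and concentrating on totality and antisymmetry, where welding is the source of difficulty. Reflexivity is immediate from the disjunct $T_g=T_h$. For transitivity, if one of the two hypotheses is an equality of components there is nothing to do; otherwise $T_g\precsim T_h$ and $T_h\precsim T_k$, and since $h\in T_h$ the middle component is non-empty, so transitivity of $\precsim$ through any element of $T_h$ gives $T_g\precsim T_k$, hence $T_g\leq T_k$.

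The heart of the matter is totality: for $T_g\neq T_h$ I must produce $T_g\precsim T_h$ or $T_h\precsim T_g$. By Proposition \ref{Tgpartition} the two components are disjoint, and I would argue by contradiction, assuming neither inequality holds. This yields $t_1\precnsim s_1$ and $s_2\precnsim t_2$ with $s_1,s_2\in T_g$ and $t_1,t_2\in T_h$, i.e. genuine interleaving. Comparing $s_1$ with $t_2$ through totality of $\precsim$, the cases $s_1\precnsim t_2$ and $t_2\precnsim s_1$ each place an element of one component strictly between two elements of the other, which strict convexity of $T_g$ and $T_h$ (Propositions \ref{tgotype} and \ref{tgvtypeconvex}) forbids, given disjointness. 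The delicate case is $s_1\sim t_2$; comparing then $t_1$ with $s_2$, the sub-cases $s_2\precnsim t_1$ and $t_1\precnsim s_2$ again yield a forbidden strictly-between element, leaving only $t_1\sim s_2$, in which $T_g$ and $T_h$ each meet two common classes $cl(s_2)\precnsim cl(s_1)$ with nothing strictly between. Here I would invoke the one-sidedness of the convexity complement: $T_g$ (and $T_h$) is strictly convex with its only partial boundary class on a single side, namely $cl(\ig)$ at the bottom in the o-type case (Proposition \ref{tgotype}(a), right-convexity) and $F_g$ at the top in the v-type case (Proposition \ref{tgvtypeconvex}, left-convexity). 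A component would here have to be partial in both its extreme classes, which is impossible, giving the contradiction.

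For antisymmetry, suppose $T_g\precsim T_h$ and $T_h\precsim T_g$ with $T_g\neq T_h$. Then every element of $T_g$ is equivalent to every element of $T_h$, so $T_g\cup T_h$ lies in a single class $cl(a)$. If $g$ is o-type this is impossible, since $T_g^-\precnsim T_g^+$ forces $T_g$ to meet at least two classes (Proposition \ref{tglookslikeotype}); hence $g$, and likewise $h$, is v-type. To conclude I would establish the auxiliary fact that a v-type class contains no $o^+$-type element: if $m\sim g$ were $o^+$-type, then $m$ must be maximal in $T_m$ (otherwise $g$ would be strictly between two elements of $T_m$ and thus lie in the o-type component $T_m$ by strict convexity), but then, $cl(m)$ not being the unique partial (bottom) class of the right-convex set $T_m$ by Proposition \ref{tgotype}(a), the whole class $cl(m)$ is contained in $T_m$, again forcing $g\in T_m$, a contradiction. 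Thus $cl(a)$ has no $o^+$-type element, so none lies between $g$ and $h$, whence $g\in T_h$ by the definition of the v-type component; by Proposition \ref{Tgpartition} this gives $T_g=T_h$, contradicting $T_g\neq T_h$.

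The main obstacle throughout is precisely the boundary behaviour caused by welding: a priori two components could touch or interleave along a shared class. The uniform resolution is the observation, drawn from Propositions \ref{tgotype} and \ref{tgvtypeconvex} together with Lemma \ref{strictconvexity}, that a type-component, though only strictly convex, fails to contain a full class on \emph{one} side only; it may therefore share a single boundary class with a neighbour but can never be ``open'' at both ends, which is exactly what rules out both the interleaving sub-case of totality and the collapse of two distinct components into one class in antisymmetry.
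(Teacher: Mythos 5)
Your proof is correct and follows essentially the same route as the paper's (which disposes of totality by citing strict convexity plus pairwise disjointness, and of antisymmetry by observing that $T_g\cup T_h$ collapsing to one class forces $g,h$ to be v-type and equivalent). You merely spell out the details the paper leaves implicit — in particular the one-sided-boundary property ruling out interleaving at a shared class, and the fact that a v-type class contains no $o^+$-type element (which the paper gets more directly from Lemma \ref{otypeequiv}).
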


 \begin{proof}
  The fact that $\leq$ is total follows from the fact that the type-components are strictly convex and pairwise disjoint. 
The relation $\leq$ is clearly reflexive and transitive, let us prove that is is antisymmetric.
 If $T_g\precsim T_h\precsim T_g$, then  all elements of $T_g\cup T_h$ are equivalent to one another. It follows that 
 $h,g$ must both be v-type. Since $g\sim h$, this implies $T_g=T_h$. 
 \end{proof}

 \begin{Rem}
  If $S$ is a subset of $G$ which contains elements $s,t\in S$ such that $s\precnsim t$, then $S\precsim S$ does not hold
  (remember that $S\precsim T$ means that $s\precsim t$ for \textit{any} pair 
  $(s,t)\in S\times T$). Hence the condition $T_g=T_h$ does not imply $T_g\precsim T_h$. 
  Therefore, the condition ``$T_g=T_h$'' in the definition of $\leq$ is  essential for 
reflexivity.
   \end{Rem}

 \begin{Prop}\label{typevaluation}
  Set $\Gamma:=\{T_g\mid g\in G\}$ and let $\leq^{\ast}$ be the reverse order of the one given in Proposition 
  \ref{ordertypecomponents}.
  We define a valuation on $G$ called the \textbf{type-valuation associated to $\precsim$} by 
  \begin{align*} 
   v:\quad & G\to(\Gamma,\leq^{\ast})\\
   &g\mapsto  T_g.
  \end{align*}

 \end{Prop}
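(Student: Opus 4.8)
The plan is to identify the value $\infty$ with the bottom type-component $T_1=\{1\}$ and then to check the four valuation axioms directly, the whole argument resting on a single structural identification of the subgroups $G^g$ and $G_g$ in terms of the value order $\leq^{\ast}$.

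I first record the easy book-keeping. By Proposition \ref{Tgpartition} the fibres of $v$ are exactly the type-components, since $v(g)=v(h)$ means $T_g=T_h$. Because $T_a=T_{a^{-1}}$ for every $a$ (this holds by definition when $a$ is o-type, and for v-type $a$ the element $a^{-1}$ is again v-type and lies in $T_a$ by Lemma \ref{otypeequiv}, no $o^+$-type element being equivalent to $a$), we have $v(a)=v(a^{-1})$. Axiom (i) is Proposition \ref{ordertypecomponents}, which says that $\leq$, hence its reverse $\leq^{\ast}$, is a total order. For axiom (ii) note that $(CQ_1)$ makes $T_1=\{1\}$ the $\precsim$-least class, so it is the $\leq$-least and the $\leq^{\ast}$-greatest element; thus $T_1$ is legitimately $\infty$, and $v(g)=\infty$ iff $T_g=\{1\}$ iff $g=1$, the last equivalence because $g\in T_g$ whenever $g\neq1$.

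The key step is the identification, for $g\neq1$, of $G_g=\{a\mid T_a<T_g\}$ and $G^g=G_g\cup T_g=\{a\mid T_a\leq T_g\}$, where $<$ and $\leq$ are taken in the order of Proposition \ref{ordertypecomponents}. The inclusion $G^g\subseteq\{a\mid T_a\leq T_g\}$ is the easy one: if $a\in T_g$ then $T_a=T_g$, and if $a\in G_g$ then $a$ lies strictly $\precsim$-below $T_g$, or (by Proposition \ref{Ggstrictconvex} in the o-type case) is a welding point equivalent to $\min(T_g)$; in either situation the totality of $\leq$ forces $T_a<T_g$. The reverse inclusion is where the real work lies. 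Given $T_a<T_g$, if $a\precnsim T_g$ then $a\in G^g$, because $G^g$ is an initial segment in the o-type case (Proposition \ref{Gginitialsegment}) and left-convex in the v-type case (Proposition \ref{Ggvtypecase}), and $1\precsim a\precnsim T_g$. The one remaining possibility is welding, $a\sim t_0\in T_g$ with $a\notin T_g$, and this is the main obstacle. I would dispose of it using the fine structure of $T_g$: when $t_0\in T_g^+$ the convexity of $T_g^+$ (Proposition \ref{tgotype}) already forces $a\in T_g$, a contradiction; when $g$ is v-type, Lemma \ref{otypeequiv} rules out $a$ being $o^+$-type and shows that an $o^-$-type $a$ welded to a v-type component must lie above it, i.e.\ $T_a>T_g$, again a contradiction; and when $g$ is o-type with $t_0\in T_g^-$ one gets $a\in cl(g^{-1})\backslash T_g=F_g\subseteq G_g$ by Proposition \ref{Ggstrictconvex}. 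Hence $a\in G^g$ in every surviving case.

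With this identification the last two axioms are short. For the ultrametric inequality (iii), using $v(a)=v(a^{-1})$ I may assume $T_g\leq T_h$, so that $\min(v(g),v(h))=T_h$ in the value order. By the key step $g,h\in G^h$, and $G^h$ is a subgroup by Propositions \ref{otypequotient} and \ref{Ggvtypecase}, whence $g\ih\in G^h$; applying the key step again gives $T_{g\ih}\leq T_h$, which is exactly $v(g\ih)\geq\min(v(g),v(h))$. For the conjugation axiom (iv), $(CQ_3)$ says that $a\mapsto a^z$ is an automorphism of $(G,\precsim)$; it commutes with inversion and therefore preserves the v-type/o-type distinction and carries $T_g$ onto $T_{g^z}$, while preserving $\precsim$ and hence the order of Proposition \ref{ordertypecomponents}. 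Thus $T_g\leq T_h$ iff $T_{g^z}\leq T_{h^z}$, which is axiom (iv). The main obstacle throughout is the welding sub-case of the key identification; once it is handled, the axioms follow formally.
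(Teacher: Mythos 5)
Your proof is correct and follows essentially the same route as the paper: verify the valuation axioms directly, deduce the ultrametric inequality from the fact that $G^g$ is a subgroup containing exactly the elements $a$ with $T_a\leq T_g$, and get conjugation-invariance from $(CQ_3)$ preserving $\precsim$ and types. The only difference is that you spell out in detail the identification $G^g=\{a\mid T_a\leq T_g\}$ (including the welding sub-cases), which the paper compresses into the phrase ``by definition of $\leq^{\ast}$, it follows that $h\in G^g$'', relying implicitly on the strict convexity results of Section \ref{typecomponents}.
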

\begin{proof}
 Clearly, $T_1$ is a maximum of $(\Gamma,\leq^{\ast})$ and $v(g)=v(\ig)$ for any $g\in G$. Let $g,h\in G$ with 
 $v(g)\leq^{\ast} v(h)$. By definition of $\leq^{\ast}$, it follows that $h\in G^g$. Since $G^g$ is a group, we then have 
 $gh\in G^g$. This implies $T_{gh}=T_g$ or $T_{gh}\precsim T_g$, which means $v(g)\leq^{\ast} v(gh)$, hence 
 $\min(v(g),v(h))\leq^{\ast}v(gh)$. Now let $z\in G$. If $T_h\precsim T_g$, then in particular 
 $h\precsim g$, so $h^z\precsim g^z$. This implies $v(g^z)\leq^{\ast} v(h^z)$. Now assume $T_g=T_h$. If $g,h$ are both 
 v-type, then so are $g^z$ and $h^z$ (this follows from $(CQ_3)$). Since $h\in T_g$, there is no $o^+$-type element between $g$ and $h$. Therefore, 
 by $(CQ_3)$, there cannot be an $o^+$-type elements between $g^z$ and $h^z$. This proves $T_{g^z}=T_{h^z}$. 
 The same kind of argument show $T_g^+=T_h^+$ in the case where $g,h$ are both $o^+$-type. If one of them is 
 $o^-$-type, then take their inverse and we are back to the $o^+$-type case. 
\end{proof}

 \subsection{Structure theorems}\label{maintheoremsection}
   
   We now want to summarize the results of Section \ref{typecomponents} into a structure theorem of 
   C-q.o.g's.   
   We start by giving two ways of constructing C-q.o's: lifting from quotients and ``welding''. This will justify 
   the fact that the q.o's given in Example \ref{examplesfondamentales} are indeed C-q.o's. We then show
   that any C-q.o can be obtained by lifting C-q.o's of elementary type and then welding if necessary.

\begin{Prop}[construction by lifting]\label{liftingweak}
  Let $G$ be a group, $v:G\to \Gamma\cup\{\infty\}$ a valuation. Assume that for each $\gamma$, 
  the quotient $G^{\gamma}/G_{\gamma}$ is endowed with a C-q.o $\precsim_{\gamma}$. Assume moreover that for any 
  $z\in G$ and any $\gamma\in\Gamma$, the isomorphism $G^{\gamma}/G_{\gamma}\to G^{\gamma^z}/G_{\gamma^z}$ 
  induced by conjugation by $z$ is quasi-order-preserving. Then the lifting of 
  $(\precsim_{\gamma})_{\gamma\in\Gamma}$ to $G$ is also a C-q.o.
 \end{Prop}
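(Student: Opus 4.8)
The plan is to verify directly that $\precsim$ satisfies the three axioms of the axiomatization of C-q.o's. I would first record that $\precsim$ is a total quasi-order: the verification given right after Lemma~\ref{quotient} for the abelian case uses only transitivity and totality of the $\precsim_\gamma$ together with the valuation axioms, so it transfers verbatim to an arbitrary group (note that $G^\gamma/G_\gamma$ is a genuine group, since $G_\gamma$ is normal in $G^\gamma$ by Remark~\ref{propval}(c)). Since ``$\Leftrightarrow$'' may be replaced by ``$\Rightarrow$'' in $(CQ_2)$ and $(CQ_3)$, it is enough to prove the forward implications. I write $\bar g:=gG_\gamma$ for the class of $g\in G^\gamma$. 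Axiom $(CQ_1)$ is then immediate from the lifting formula: if $x\neq 1$ then $v(x)<\infty=v(1)$, whence $1\precnsim x$.

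For $(CQ_3)$, fix $x,y,z$ with $x\precsim y$. If $v(x)>v(y)$, then $v(y)<v(x)$ yields $v(y^z)<v(x^z)$ by Remark~\ref{propval}(a), so $x^z\precsim y^z$. If instead $v(x)=v(y)=\gamma$, then $\bar x\precsim_\gamma\bar y$ and $v(x^z)=v(y^z)=\gamma^z$, again by Remark~\ref{propval}(a); the hypothesis that conjugation by $z$ induces a quasi-order-preserving isomorphism $G^\gamma/G_\gamma\to G^{\gamma^z}/G_{\gamma^z}$ then gives $x^zG_{\gamma^z}\precsim_{\gamma^z}y^zG_{\gamma^z}$, i.e. $x^z\precsim y^z$. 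This is the only place where the conjugation-invariance hypothesis is needed.

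The substantial case is $(CQ_2)$: assuming $x\precsim y$, I must show $x\iy\precsim\iy$, and the idea is to locate $v(x\iy)$ relative to $v(\iy)=v(y)$ using the ultrametric behaviour of $v$. Suppose first $v(x)>v(y)=:\gamma$. Then $x\in G_\gamma$, and since $v(\iy)=\gamma<v(x)$, Remark~\ref{propval}(b) gives $v(x\iy)=\gamma$; moreover $x\in G_\gamma$ forces $\overline{x\iy}=\bar x\,\overline{\iy}=\overline{\iy}$, so $x\iy G_\gamma=\iy G_\gamma$ and hence $x\iy\precsim\iy$. Suppose instead $v(x)=v(y)=\gamma$, so that $\bar x\precsim_\gamma\bar y$. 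If $v(x\iy)>\gamma=v(\iy)$, then $x\iy\precnsim\iy$ and we are done; otherwise $v(x\iy)=\gamma$, and applying axiom $(CQ_2)$ for the C-q.o $\precsim_\gamma$ to $\bar x\precsim_\gamma\bar y$ yields $x\iy G_\gamma\precsim_\gamma\iy G_\gamma$, which, both elements having value $\gamma$, is precisely $x\iy\precsim\iy$.

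The main obstacle is exactly this $(CQ_2)$ computation: one must control the value $v(x\iy)$ through the ultrametric properties of the valuation so as to reduce, in the critical equal-value subcase, to axiom $(CQ_2)$ for the quotient C-q.o $\precsim_\gamma$. The other two axioms are essentially formal, with $(CQ_3)$ isolating the sole role of the conjugation-invariance assumption.
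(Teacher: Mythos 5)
Your proof is correct and follows essentially the same route as the paper's: the same case split on $v(x)$ versus $v(y)$, the same use of the ultrametric inequality to reduce the equal-value subcase of $(CQ_2)$ to axiom $(CQ_2)$ for $\precsim_{\gamma}$, and the same isolation of the conjugation-invariance hypothesis in $(CQ_3)$. The only additions are explicit bookkeeping (totality of the lifting for non-abelian $G$, the reduction to one-directional implications) that the paper leaves implicit.
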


 \begin{proof}
  Denote by $\precsim$ the lifting.  
  $(CQ_1)$ is clearly satisfied.   
  Let $x\precsim y$. If $v(x)>v(y)$, then $v(x\iy)=v(\iy)=:\gamma$ and
  $x\iy G_{\gamma}=\iy G_{\gamma}$. This implies $x\iy G_{\gamma}\precsim_{\gamma} \iy G_{\gamma}$, hence $x\iy\precsim \iy$. We also have 
  $v(x^z)>v(y^z)$, hence $x^z\precsim y^z$.  
  Assume $v(x)=v(y)=\gamma$ and $xG_{\gamma}\precsim_{\gamma}yG_{\gamma}$. 
 This implies $v(x^z)=v(y^z)=\gamma^z$. By assumption,
  $x^zG_{\gamma^z}\precsim_{\gamma} y^zG_{\gamma^z}$, hence $x^z\precsim y^z$. Moreover, we have 
 $v(x\iy)\geq\min(v(x),v(y))=\gamma$. If $v(x\iy)>\gamma$, then $x\iy\precsim\iy$, so assume $v(x\iy)=\gamma$.
  Since $\precsim_{\gamma}$ is a C-q.o, we have $x\iy G_{\gamma}\precsim_{\gamma} \iy G_{\gamma}$, hence $x\iy\precsim \iy$. 
  
 \end{proof}
 
 As a special case of lifting we can define a C-q.o on semi-direct products, which is how we obtained 
 Example \ref{examplesfondamentales}(e):
 \begin{Prop}\label{semidirectproductlift}
  Let $(G,\precsim_G),(H,\precsim_H)$ be two C-q.o.g and let 
  $\alpha: G\to Aut(H)$ such that for any $g\in G$, $\alpha(g)$ preserves $\precsim_H$. Define 
  a q.o $\precsim$ on $G\ltimes_{\alpha}H$ by \newline  
  $(g_1,h_1)\precsim (g_2,h_2)\Leftrightarrow (g_1\precsim_G g_2)\wedge(g_2\neq 1\vee(g_2=1\wedge h_1\precsim_H h_2))$.
  Then $\precsim$ is a C-q.o.
 \end{Prop}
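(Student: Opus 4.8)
The plan is to realize $\precsim$ as a lifting and then invoke Proposition \ref{liftingweak}. Write $F:=G\ltimes_{\alpha}H$ and let $\pi\colon F\to G$, $(g,h)\mapsto g$, be the canonical projection; it is a surjective homomorphism with kernel $\{1\}\times H$, which I identify with $H$. First I would define a valuation $w\colon F\to\{\gamma_0,\gamma_1\}\cup\{\infty\}$, with $\gamma_0<\gamma_1<\infty$, by setting $w(g,h)=\gamma_0$ if $g\neq1$, $w(g,h)=\gamma_1$ if $g=1\neq h$, and $w(1,1)=\infty$; thus $w$ sees only the three levels cut out by the conditions ``$\pi(x)=1$'' and ``$x=1$''. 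Checking that $w$ is a valuation is then short. Axiom (ii) is immediate. For the ultrametric inequality (iii), note $\pi(xy^{-1})=\pi(x)\pi(y)^{-1}$: if $g_1\neq1$ or $g_2\neq1$ then $\min(w(x),w(y))=\gamma_0$ and the inequality is trivial, while if $g_1=g_2=1$ then $x,y,xy^{-1}\in H$ and $w(xy^{-1})\geq\gamma_1=\min(w(x),w(y))$ (or both sides equal $\infty$). For axiom (iv) I would observe that conjugation by any $z\in F$ fixes $1$ and preserves $H$, hence preserves both defining conditions, so that $w(x^{z})=w(x)$ for every $x$; in particular the induced action on $\Gamma=\{\gamma_0,\gamma_1\}$ is trivial and (iv) holds automatically.

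Next I would identify the fibers. Since $F_{\gamma_0}=\{x\mid w(x)>\gamma_0\}=H$ and $F^{\gamma_0}=F$, the projection $\pi$ induces an isomorphism $F^{\gamma_0}/F_{\gamma_0}=F/H\cong G$, and I equip this fiber with $\precsim_G$ transported along $\pi$. Likewise $F^{\gamma_1}=H$ and $F_{\gamma_1}=\{1\}$, so $F^{\gamma_1}/F_{\gamma_1}\cong H$, which I equip with $\precsim_H$. A direct case analysis then shows that the lifting of $(\precsim_{\gamma_0},\precsim_{\gamma_1})$ to $F$ is exactly the relation $\precsim$ of the statement. When $g_1,g_2\neq1$ the $w$-values agree and the lift compares $g_1,g_2$ by $\precsim_G$, matching the clause $g_1\precsim_G g_2$ with $g_2\neq1$; when exactly one of $g_1,g_2$ equals $1$ the $w$-values differ, and the lift agrees with the clause that forces $g_1=1$ whenever $g_2=1$ (using $(CQ_1)$ in $G$, which makes $g_1\precsim_G 1$ impossible for $g_1\neq1$); and when $g_1=g_2=1$ the lift compares the $H$-parts by $\precsim_H$.

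It then remains to verify the hypothesis of Proposition \ref{liftingweak}, that for every $z\in F$ conjugation induces a quasi-order-preserving map on each fiber (here $\gamma^{z}=\gamma$, since the $\Gamma$-action is trivial). On $F^{\gamma_0}/F_{\gamma_0}\cong G$, conjugation by $z=(z_1,z_2)$ corresponds under $\pi$ to conjugation by $z_1$ on $G$, which preserves $\precsim_G$ by axiom $(CQ_3)$ for $\precsim_G$. On $F^{\gamma_1}/F_{\gamma_1}\cong H$, the induced map is conjugation in $F$ restricted to $H$; writing $z=(z_1,1)(1,z_2)$, it is the composite of $\alpha(z_1)$ with the inner automorphism of $H$ by $z_2$, the former preserving $\precsim_H$ by the hypothesis on $\alpha$ and the latter by $(CQ_3)$ for $\precsim_H$, so the composite preserves $\precsim_H$. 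With both hypotheses established, Proposition \ref{liftingweak} gives that the lift $\precsim$ is a C-q.o.

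The main obstacle — indeed essentially the only nonroutine point — is organizing the case analysis showing that the lift coincides with the prescribed formula for $\precsim$. Everything else is disarmingly light precisely because $w$ records only the three levels determined by $\pi(x)=1$ and $x=1$: this makes the valuation axioms and the triviality of the $\Gamma$-action almost automatic, and it reduces the two conjugation hypotheses to $(CQ_3)$ for $\precsim_G$ and $\precsim_H$ together with the assumption that each $\alpha(g)$ preserves $\precsim_H$.
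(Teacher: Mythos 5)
Your proposal is correct and follows essentially the same route as the paper: define the two-level valuation on $F=G\ltimes_{\alpha}H$ determined by the conditions $\pi(x)=1$ and $x=1$, identify the fibers $F^{\gamma_0}/F_{\gamma_0}\cong G$ and $F^{\gamma_1}/F_{\gamma_1}\cong H$, verify the conjugation hypotheses via $(CQ_3)$ and the assumption on $\alpha$, and invoke Proposition \ref{liftingweak}. The only difference is that you spell out the case analysis showing the lift coincides with the prescribed formula (using $(CQ_1)$ to handle the clauses involving $1$), a step the paper leaves implicit.
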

\begin{proof}

 Set $F:=G\ltimes_{\alpha}H$, $\Gamma:=\{1,2\}$ and define $v: F\to\Gamma\cup\{\infty\}$ as follows:\newline 
 $v(g,h):=\begin{cases}
           1 \text{ if }g\neq1.\\
           2 \text{ if } g=1\neq h.\\
           \infty\text{ if } g=h=1.
          \end{cases}$
          
          This defines a valuation on $F$. 
          We have $F_2\cong\{1\}$, $F^2=F_1= \{1\}\times H$ and 
          $F^1=F$. Now take $h_1,h_2\in H\cong F^2/F_2$ with $h_1\precsim_H h_2$ and $z=(g,h)\in F$.
          Since $H$ is normal in $F$, we have $F^2/F_2=F^{2^z}/F_{2^z}$.
          We have $h_i^z=(\alpha(g)(h_i))^h$ for $i=1,2$. 
          By assumption, $\alpha(g)$ preserves $\precsim_H$, hence $\alpha(g)(h_1)\precsim_H\alpha(g)(h_2)$.
          By $(CQ_3)$, it then follows that
          $h_1^z\precsim_H h_2^z$. This proves that the isomorphism $F^2/F_2\to F^{2^z}/F_{2^z}$  induced by $z$ preserves 
          $\precsim_H$. Now note that 
          $G\cong F^1/F_1$, so $F^1/F_1$ is endowed with a C-q.o. $\precsim_G$ defined by \newline
          $(g_1,1).F_1\precsim_G(g_2,1).F_1\Leftrightarrow g_1\precsim_Gg_2$.          
          Take $(g_1,1).F_1$ and $(g_2,1).F_1$ in  $F^1/F_1$ with $(g_1,1).F_1\precsim_G(g_2,1).F_1$. By definition, 
          $((g_i,1).F_1)^z=(g_i,1)^z.F_1=(g_i^g,h\alpha(g_i^g)(\ih)).F_1$\newline $=(g_i^g,1).F_1$. 
          Because $(g_1,1).F_1\precsim_G(g_2,1).F_1$, it follows from $(CQ_3)$ on $G$ that \newline
          $(g_1^g,1)\precsim_G(g_2^g,1)$, hence $((g_1,1).F_1)^z\precsim_G((g_2,1).F_1)^z$.
          This proves that the isomorphism 
          $F^1/F_1\to F^{1^z}/F_{1^z}$ induced by $z$ preserves $\precsim_G$. Thus, the hypothesis of 
          Proposition \ref{liftingweak} are satisfied, so the lifting of 
          $(\precsim_H,\precsim_G)$ to $F$ is a C-q.o.

\end{proof}

  We now introduce another way of obtaining C-q.o's, which we call welding. 
Let $g$ be an o$^-$-type element, and assume that  the maximum $M_g$ of $G_g$ is non-empty. 
We noted in Proposition \ref{Ggstrictconvex} that, if $F_g\neq\varnothing$, then $M_g=F_g$, and so, by 
Proposition \ref{tgotype}(a), we have $M_g\subseteq cl(g)$. If $F_g=\varnothing$, then by Proposition 
\ref{Ggstrictconvex} we have $G_g=\{h\in G\mid h\precnsim g\}$. In any case, there is no element
strictly contained between $M_g$ and $cl(g)$.
 This means that we can coarsen $\precsim$ by joining the sets $cl(g)$ and $M_g$. In other words, we define a coarsening 
$\precsim_2$ of $\precsim$ by declaring that $h\sim_2f$ for any $f,h\in M_g\cup cl(g)$ and $h\precsim_2f\Leftrightarrow h\precsim f$ whenever 
 $h\notin M_g\cup cl(g)$ or $f\notin M_g\cup cl(g)$. Note that, in example \ref{examplesfondamentales}(b), if we set 
 $g:=(-1,0)$, then we have $G_g=\{0\}\times\Z$ and $M_g=\{0\}\times(\Z\backslash\{0\})\subseteq cl(g)$. Therefore, 
 it can happen that $M_g\subseteq cl(g)$, in which case nothing changes. But if 
$T_g$ is convex, then by \ref{Ggstrictconvex} we have $M_g\cap cl(g)=\varnothing$, and then $\precsim_2$ is different from $\precsim$. If we
 apply this coarsening operation simultaneously at each $g^z$ for $z\in G$, then we will obtain a new C-q.o,
as the next proposition shows:
  
 \begin{Prop}[Construction by welding]\label{weldingconstruction}
  Let $(G,\precsim)$ be a C-q.o.g and $g\in G$ an $o^-$-type element such that 
  $M_g:=\max(G_g)$ is non-empty. Then for any $z\in G$, 
  $M_{g^z}:=\max(G_{g^z})$ is also non-empty, so we can define a coarsening $\precsim_2$ of $\precsim$ by declaring 
  $M_{g^z}\sim_2 g^z$ for every $z\in G$. Moreover, this coarsening is a C-q.o.
 \end{Prop}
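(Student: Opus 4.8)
The plan is to reduce the whole statement to one structural fact, namely that $M_g$ consists of \emph{v-type} elements, and then to verify the three axioms $(CQ_1)$, $(CQ_2)$, $(CQ_3)$ of the axiomatization of C-q.o's for the coarsening $\precsim_2$. Granting the structural fact, the coarsening simply welds each o-type class $cl(g^z)$ to the v-type class lying immediately beneath it.

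First I would settle the conjugates. A direct computation with $(CQ_3)$ gives $G_{g^z}=(G_g)^z$ for every $z\in G$: indeed $h\in (G_g)^z$ means $\{h^{z^{-1}},(\ih)^{z^{-1}}\}\precsim g$, which by $(CQ_3)$ is equivalent to $\{h,\ih\}\precsim g^z$, i.e. $h\in G_{g^z}$. Since conjugation by $z$ is a $\precsim$-automorphism (again $(CQ_3)$), it carries $\max(G_g)$ onto $\max(G_{g^z})$, so $M_{g^z}=(M_g)^z$ is non-empty, and the whole welding datum $\{(cl(g^z),M_{g^z})\}_{z\in G}$ is invariant under conjugation.

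The key step is to show $M_g$ is v-type. If $M_g\subseteq cl(g)$ (the case $F_g\neq\varnothing$ of Proposition \ref{Ggstrictconvex}) then $\precsim_2=\precsim$ and there is nothing to prove, so I may assume $G_g=\{h\precnsim g\}$, whence $M_g$ is the $\precsim$-class immediately below $cl(g)$. By Proposition \ref{ordertypecomponents} the elements of $G_g$ lie in the type-components strictly below $T_g$, so $M_g$, being their maximum, is the maximum of the topmost such component $T_f$ (which exists since $M_g\neq\varnothing$). Now an o-type component has no maximum: by Proposition \ref{otypequotient} the quotient $G^f/G_f$ is order-type, so $T_f^+/G_f$ is the positive cone of a non-trivial ordered group and hence has no largest class, forcing $\max(T_f)=\varnothing$. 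As $M_g=\max(T_f)\neq\varnothing$, the component $T_f$ must be v-type and $M_g$ is v-type. Consequently each welding identifies an o-type class $cl(g^z)$ with the v-type class $M_{g^z}$ immediately below it; since the two members of a pair have different types and the lower member is determined by the upper one, distinct pairs are disjoint and cannot chain, so the $\sim_2$-classes are convex intervals and $\precsim_2$ is a well-defined total coarsening of $\precsim$.

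It remains to check the axioms. Axiom $(CQ_1)$ holds because $M_{g^z}=(M_g)^z$ is the maximum of a type-component and hence does not contain $1$, so the class $\{1\}$ takes part in no welding and stays the $\precsim_2$-minimum. Axiom $(CQ_3)$ follows from the conjugation-invariance above: conjugation by any $w$ permutes the welding pairs and preserves $\precsim$, hence is a $\precsim_2$-automorphism. The real obstacle is $(CQ_2)$, for which (by the remark following the axiomatization) it suffices to prove $x\precsim_2 y\Rightarrow x\iy\precsim_2\iy$. If $x\precsim y$ already holds, then $x\iy\precsim\iy$ by $(CQ_2)$ for $\precsim$, hence $x\iy\precsim_2\iy$. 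Otherwise $y\precnsim x$ while $x\sim_2 y$, so $x$ and $y$ lie in a common welded class; conjugating (legitimate by $(CQ_3)$) I may take this class to be $cl(g)\cup M_g$, with $x\in cl(g)$ and $y\in M_g$. Then $x$ is $o^-$-type, $y$ (hence $\iy$) is v-type with $\iy\sim y\in M_g$, and $\iy\precnsim x\precnsim\ix$. Applying Lemma \ref{lemfond}(i) with the roles $g\mapsto x$, $h\mapsto\iy$ gives $x\iy\sim x\sim g$, so $x\iy\in cl(g)$; since $cl(g)$ and $M_g$ are welded, $x\iy\sim_2\iy$ and in particular $x\iy\precsim_2\iy$. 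The heart of the argument — and the place where the counter-intuitive nature of welding is genuinely used — is precisely that $M_g$ is v-type: were $y$ permitted to be o-type, Lemma \ref{lemfond}(ii) would put $x\iy$ \emph{strictly above} $\iy$ and $(CQ_2)$ would fail.
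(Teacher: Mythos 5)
Your proposal is correct and follows the same overall architecture as the paper's proof: reduce to the case $F_{g^z}=\varnothing$ (otherwise $\precsim_2=\precsim$), establish that the welded-up class $M_{g^z}$ consists of v-type elements while $cl(g^z)$ is o$^-$-type, and then verify $(CQ_2)$ via Lemma \ref{lemfond} (obtaining $x\iy\sim x$, hence $x\iy\sim_2\iy$) and $(CQ_3)$ via conjugation-invariance of the whole welding datum. The one point where you genuinely diverge is the key fact that $M_g$ is v-type: you derive it structurally, identifying $M_g$ with $\max(T_f)$ for a type-component $T_f$ and noting that an o-type component has no maximum since $T_f^+/G_f$ is the positive cone of a non-trivial ordered group. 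The paper's argument is shorter and purely local: for $y\in M_g$, maximality gives $\iy\precsim y$, and $\iy\precnsim y$ would yield $y\precnsim y^2\in G_g$ by $(CQ_2')$, contradicting maximality; so $y\sim\iy$. Your route buys some structural insight (it locates the weld at the top of a v-type component) but is slightly looser: it tacitly assumes the $\sim$-class $M_g$ lies in a single type-component, which is not automatic when welding occurs elsewhere in $G$; this is repaired either by running your argument element-wise (each $m\in M_g$ satisfies $T_m\subseteq G_g$ and $m\in\max(T_m)$, so $T_m$ cannot be o-type) or by substituting the paper's two-line computation. With that caveat, the two proofs are essentially equivalent.
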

 
 \begin{proof}
  Note that by $(CQ_3)$, we have $g\precnsim \ig\Rightarrow g^z\precnsim (\ig)^z=(g^z)^{-1}$, so 
  $g^z$ is o$^{-}$-type. The fact that $M_{g^z}$ is non-empty is also a direct consequence of $(CQ_3)$. 
  It also follows from $(CQ_3)$ that $F_g\neq\varnothing\Leftrightarrow F_{g^z}\neq\varnothing$. Note also that 
  if $F_g\neq\varnothing$, then by Proposition \ref{Ggstrictconvex} we have 
  $M_g=F_g$, so we already have $M_g\sim g$. By $(CQ_3)$, this implies $M_{g^z}\sim g^z$ for all $z\in G$. 
  It then follows that $\precsim=\precsim_2$, so there is nothing to prove. Therefore, we can assume 
  without loss of generality that $F_{g^z}=\varnothing$ for all $z\in G$.

  Set $\precsim_1:=\precsim$. We want to show that $\precsim_2$ is a C-q.o. 
  Let $x,y,z\in G$ with $x\precsim_2 y$. 
  If $x\precsim_1y$, then we have $x\iy\precsim_1\iy$ and 
  $x^z\precsim_1y^z$. Since $\precsim_2$ is a coarsening of $\precsim_1$, this implies 
  $x\iy\precsim_2\iy$ and $x^z\precsim_2y^z$. Now assume $y\precnsim_1x$. This can only happen if there is 
  $w\in G$ with $y\in M_{g^w}$ and $x\sim_1 g^w$. Since we assumed that $F_{g^w}=\varnothing$, 
  it follows that $x$ is o$^-$-type.
  By maximality of $y$, we have $\iy\precsim_1 y$. We thus have 
  $\{y,\iy\}\precsim_1 x\precnsim_1\ix$.
 By Lemma 
 \ref{lemfond}(iii), this implies $x\iy\sim_1 x$. 
 By $(CQ_2')$, $\iy\precnsim_1 y$  would imply $y\precnsim_1 y^2$, which would contradict the maximality of $y$. It follows that $y$ is v-type.
  We thus have $x\iy\sim_1 g^w$ and $\iy\in M_{g^w}$.
 By definition of 
  $\precsim_2$, this implies $x\iy\sim_2\iy$. Moreover, we have $y^z\in M_{g^{wz}}$ and $x^z\sim_1 g^{wz}$, which also 
  implies $x^z\sim_2 y^z$.
 \end{proof}
 
 We see that, if we lift a family of C-q.o's of elementary types as in Proposition 
 \ref{liftingweak} and then apply welding, then the q.o which we obtain is again 
 a C-q.o. Our main theorem states that any C-q.o
 is obtained through this process:

 \begin{Thm}[Structure theorem of a C-q.o.g]\label{structuretheorem}
  Let $(G,\precsim)$ be a C-q.o.g.  
  There exists a valuation $v$ on $G$ with value set $\Gamma\cup\{\infty\}$, called the type-valuation associated to 
  $\precsim$, such that 
  the following holds:\begin{enumerate}[(i)]
                                \item For any $\gamma\in\Gamma$, 
                                $G^{\gamma}$ and $G_{\gamma}$ are $\precsim$-strictly-convex subgroups of $G$.                              
                                \item The q.o $\precsim_{\gamma}$ induced by $\precsim$ on 
                                $H_{\gamma}:=G^{\gamma}/G_{\gamma}$ is of elementary type.
                                \item If $\gamma\leq \delta$, if $\precsim_{\gamma},\precsim_{\delta}$ are both valuational, 
 then there exists $\alpha$ between $\gamma$ and $\delta$ such that
 $\precsim_{\alpha}$ is order-type.
                               \end{enumerate}
                               
 Moreover, the q.o $\precsim$ can be obtained by lifting the family $(\precsim_{\gamma})_{\gamma\in\Gamma}$ to 
 $G$ and then welding if necessary.
 \end{Thm}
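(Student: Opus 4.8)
The plan is to take $v$ to be the type-valuation of Proposition \ref{typevaluation}, so that $\Gamma=\{T_g\mid g\in G\}$, $v(g)=T_g$, and $\infty=T_1$. The first thing to record is that for $\gamma=v(g)=T_g$ the valuation-theoretic fibers coincide with the group-theoretic sets of Section \ref{typecomponents}: $G^{\gamma}=G^g$ and $G_{\gamma}=G_g$. This is exactly the identification already used in the proof of Proposition \ref{typevaluation} (``$v(g)\leq^{\ast}v(h)$ iff $h\in G^g$''), and it is independent of the representative by Proposition \ref{Tgpartition}. Granting this, parts (i) and (ii) are a repackaging of earlier results: $G^g$ and $G_g$ are subgroups and are $\precsim$-strictly-convex by Proposition \ref{otypequotient} (o-type case) and Proposition \ref{Ggvtypecase} (v-type case); and the q.o $\precsim_{\gamma}$ induced on $H_{\gamma}=G^g/G_g$ (well defined by Proposition \ref{strictconvexquotientProp}, since $G_g$ is strictly convex and normal in $G^g$) is order-type when $g$ is o-type (Proposition \ref{otypequotient}) and valuational when $g$ is v-type (Proposition \ref{vtypequotient}), hence of elementary type in either case.

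For (iii) I would argue directly from the definition of a v-type type-component. Let $\gamma=v(g)$ and $\delta=v(h)$ with $g,h$ both v-type and $\gamma\neq\delta$; by totality of the block order (Proposition \ref{ordertypecomponents}) we may assume $T_g\precnsim T_h$, so $g\precnsim h$. Since $h$ is v-type and $h\notin T_g$, the definition of $T_g$ forces an $o^+$-type element $f$ with $g\precsim f\precsim h$. Its component $T_f$ is an o-type block, hence distinct from both $T_g$ and $T_h$, and as $f$ lies $\precsim$-between $g$ and $h$ the totally ordered value set places $T_f$ strictly between $\gamma$ and $\delta$. Thus $\alpha:=v(f)$ is between $\gamma$ and $\delta$ and $\precsim_{\alpha}$ is order-type by part (ii).

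It remains to show that $\precsim$ is recovered by lifting $(\precsim_{\gamma})_{\gamma\in\Gamma}$ and then welding. First I would check the lifting $\precsim'$ is legitimate: the hypothesis of Proposition \ref{liftingweak} asks that conjugation induce quasi-order-preserving isomorphisms $G^{\gamma}/G_{\gamma}\to G^{\gamma^z}/G_{\gamma^z}$, and this follows from $(CQ_3)$ exactly as in the proof of Proposition \ref{typevaluation}, conjugation preserving both types and $\precsim$. Next I would verify that $\precsim$ is a coarsening of $\precsim'$. Take $g\precsim' h$. If $g$ and $h$ lie in different type-components, the lifting forces $T_g\precnsim T_h$, whence $g\precnsim h$. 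If instead $T_g=T_h=:\gamma$, then $gG_{\gamma}\precsim_{\gamma}hG_{\gamma}$ with $g,h\notin G_{\gamma}$, and the quotient formula of Proposition \ref{strictconvexquotientProp} gives $g\precsim h$. Hence $\precsim'$ refines $\precsim$, and the two relations can differ only through identifications between the top of one block and the bottom of the next. In $\precsim'$ every block is convex, so there is no welding; passing from $\precsim'$ to $\precsim$ merges, for each o$^-$-type component $T_g$ at which $\precsim$ welds (equivalently $F_{\ig}\neq\varnothing$, by Proposition \ref{tgotype}(b)), the minimum class $cl_{\precsim'}(g)=T_g^-$ with the class $\max_{\precsim'}(G_g)$ immediately below it. This is precisely the coarsening produced by Proposition \ref{weldingconstruction} applied to $\precsim'$ at $g$ and, by conjugation-invariance, at every $g^z$. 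Performing this welding simultaneously at every such o$^-$-type component recovers $\precsim$ from $\precsim'$.

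The hard part will be the bookkeeping in this last step. One must match the welding points of $\precsim$, which are governed by the convexity complements $F_g$ of Proposition \ref{tgotype}, with the weldable configurations in the un-welded $\precsim'$, and in particular confirm that $\max_{\precsim'}(G_g)$ is exactly the class that $\precsim$ identifies with $T_g^-$. One must also check that distinct weldings concern disjoint pairs of adjacent blocks, so that they commute and may be applied simultaneously, and verify that the resulting simultaneous coarsening still satisfies $(CQ_1)$--$(CQ_3)$ even when there are infinitely many welding points, the single-orbit case being Proposition \ref{weldingconstruction}. Everything else reduces to invoking the structural results of Section \ref{typecomponents}.
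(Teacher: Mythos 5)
Your choice of $v$, the identification $G^{v(g)}=G^g$ and $G_{v(g)}=G_g$, and the treatment of (i) and (ii) match the paper exactly, and your direct argument for (iii) is a correct unwinding of the paper's one-line citation of Proposition \ref{tgvtype}. The verification that $\precsim$ coarsens the lifting $\precsim'$ is also the paper's argument. The problem is the last step. You assert that ``the two relations can differ only through identifications between the top of one block and the bottom of the next'' and then explicitly defer to ``bookkeeping'' the verification that each such identification is a welding in the sense of Proposition \ref{weldingconstruction}. But that verification is the substance of the theorem: a coarsening of $\precsim'$ that agrees with $\precsim'$ inside each block could a priori merge blocks in several ways, and nothing you have written rules out, say, two v-type blocks being merged, or an o$^+$-type class being identified with something below it, neither of which is a welding.

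The paper closes this gap by taking an arbitrary disagreement pair, i.e. $g\precsim h$ with $h\precnsim' g$, and proving in sequence: (1) $v(h)>v(g)$, hence $h\in G_g$ and $g\sim h$; (2) $h$ is v-type, because otherwise $G^h$ would be $\precsim$-convex (Proposition \ref{otypequotient}) and $g\precsim h$ would force $g\in G^h$, contradicting $v(g)<v(h)$; (3) $g$ is o-type (necessarily o$^-$-type), because if $g$ were v-type then $g\notin T_h$ would produce an o$^+$-type element $f$ between $g$ and $h$ with $f\sim h$, contradicting Lemma \ref{otypeequiv}; (4) $h$ lies in the convexity complement of $T_g$, hence $h\in\max(G_g,\precsim)$ by Proposition \ref{Ggstrictconvex}, and a further argument using the adjacency of $T_h$ and $T_g$ shows $h\in\max(G_g,\precsim')$. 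Only after (1)--(4) can one conclude that every disagreement is exactly the configuration produced by Proposition \ref{weldingconstruction}. Your proposal names these points as open items rather than proving them, so as written it is incomplete precisely where the theorem is nontrivial.
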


 \begin{proof}
  We already defined the type-valuation $v$ in Proposition \ref{typevaluation}. Note that for any $g\in G$, we have 
  $G^{v(g)}=G^g$ and $G_{v(g)}=G_g$. (i) and (ii) follow from Propositions \ref{otypequotient}, \ref{Ggvtypecase} and \ref{vtypequotient},
  (iii) follows from \ref{tgvtype}.
  Denote by $\precsim^{\ast}$ the lifting of $(\precsim_{\gamma})_{\gamma\in\Gamma}$ to $G$. Note that an element $g\in G$ 
  is v-type (respectively, o$^-$-type) with respect to $\precsim$ if and only if it is v-type 
  (respectively, o$^-$-type) with respect to $\etoile$ (this follows easily from Propositions \ref{otypequotient} and \ref{vtypequotient} and from 
  the definition of the the lifting).
   We first
  show that $\precsim$ is a coarsening of $\precsim^{\ast}$. Let $g,h\in G$ with 
  $g\precsim^{\ast}h$. By definition of $\precsim^{\ast}$, we either have 
  $v(h)>v(g)$ or $v(g)=v(h)\wedge gG_g\precsim_{v(g)} hG_g$. In the first case we have 
  by definition of $v$:  $g\precsim h$. In the second case, since $G_h=G_g$, we have 
  $h\notin G_g$. Thus, by definition of the q.o induced on the quotient, we must have 
  $g\precsim h$. This proves  that $\precsim$ is a coarsening of $\precsim^{\ast}$.  
  Now let $g,h\in G$ be such that $g\precsim h$ but $h\precnsim^{\ast}g$.  We will show that 
  $h$ is v-type, $g$ is o$^-$-type and $h\in\max(G_g,\etoile)$. It will then follow that 
  $\precsim$ is obtained from $\etoile$ by welding $g$ and $\max(G_g,\etoile)$.
  By definition of 
  $\precsim^{\ast}$, $h\netoile g$ means either $v(h)>v(g)$ or $v(g)=v(h)$ and 
  $hG_g\precnsim_{v(g)} gG_g$. But the latter case would imply $h\precnsim g$, so we must have 
  $v(h)>v(g)$ i.e $h\in G_g$. This implies $h\precsim g$, so $g\sim h$.
  If $h$ were o-type, then by Proposition \ref{otypequotient} $G^h$ would be convex with respect to $\precsim$. The inequality
  $g\precsim h$ would then imply $g\in G^h$, which contradicts $v(g)<v(h)$. Therefore, $h$ must be 
  v-type. Assume for a contradiction that $g$ is v-type.  Since $v(h)>v(g)$, we have $g\notin T_h$. By 
  definition of $T_h$, it follows that there is an o$^+$-type element $f$ between $g$ and $h$. 
  But since $g\sim h$, it follows that $f\sim h$. This contradicts Lemma \ref{otypeequiv}.
   Therefore, $g$ is o-type. 
  Since $h\sim g$, $h$ is in the convexity complement of $T_g$. By Proposition 
  \ref{Ggstrictconvex}, we thus have  $h\in\max(G_g,\precsim)$.
   Now let $f\in G_g$ with $h\precsim^{\ast} f$. Since $\precsim$ is a coarsening of 
  $\precsim^{\ast}$, we then have $h\precsim f$, hence $h\sim f$ by maximality of $h$. 
   Now $h\precsim^{\ast} f$ implies $v(f)\leq v(h)$ and $f\in G_g$ implies $v(f)>v(g)$. Since $h\in\max (G_g,\precsim)$, there is no element strictly contained between 
  $T_h$ and $T_g$, so we must have $v(h)=v(f)$. Since $h$ is v-type, $T_h$ is left-convex, so  $h\sim f$ implies $f\notin G_h$. 
  By definition of $\precsim_{v(h)}$ (see Proposition \ref{quotientconvex}), it then follows that 
  $hG_h\sim_{v(h)}fG_h$, hence $h\sim^{\ast}f$. This shows that $h$ is maximal in $(G_g,\precsim^{\ast})$.
Thus, the only point on which $\precsim$ 
  and $\precsim^{\ast}$ disagree are welding points, so $\precsim$ is obtained from $\precsim^{\ast}$ by welding.
 \end{proof}

 \begin{Rem}
  If there is no welding point, then $\precsim$ actually coincides with the lifting of $(\precsim_{\gamma})_{\gamma\in\Gamma}$.
  
 \end{Rem}

 \begin{Ex}\label{exempletypeval}
  We take notations from Examples \ref{examplesfondamentales}. We are going to give an explicit definition to the type-valuation 
 associated to the C-q.o's $\precsim_H$ and $\precsim_F$ of examples (d) and (e). We already defined a valuation 
 $v_G:G\to\{1,2\}\cup\{\infty\}$ on $G$ and a valuation $w_H:H\to\Z$ on $H$.
 Define $\Gamma:=\Z\times\{1,2\}$ and order $\Gamma$ lexicographically, i.e $(x,y)\leq (x',y')\Leftrightarrow (x<x'\vee (x=x'\wedge y\leq y'))$. Define 
$v_H:H\to\Gamma$ by $v(\sum_{n\in\Z}g_n\tau_n):=(k,v_G(g_k))$, where $k=w_H(\sum_{n\in\Z}g_n\tau_n)$. Then 
$v_H$ is a valuation on $H$ such that, for any $g\in G$, $T_g=\{h\in H\mid v_H(h)=v_H(g)\}$. 
If we assimilate an element $\gamma$ of $\Gamma$ with $v_H^{-1}(\{\gamma\})$,
it follows that $v_H$ is the type-valuation associated to $\precsim_H$. 
Now  we extend 
$v_H$ to a valuation $v_F:F\to\Gamma\cup\{a,\infty\}$, where $a$ is a new element such that $a<\Gamma$, as follows: 
$v_F(k,h)=\begin{cases}
         a \text{ if }k\neq0.\\
        v_H(h)\text{ if }k=0.
        \end{cases}$\newline
If we assimilate elements of $\Gamma\cup\{a\}$ with their $v_F$-fiber, then 
$v_F$ is the type-valuation associated to $\precsim_F$.
  Now take $z:=(-1,\sum_{n\in\Z}(0,0)\tau_n)\in F$ and $f:=(0,\sum_{n\in\Z}g_n\tau_n)\in F$, where $g_0=(1,0)$ and $g_n=(0,0)$ for $n\neq0$. We have 
$v_F(f)=(0,1)$ but $v_F(z+g-z)=(-1,1)<v_F(f)$. In particular, $F^{(0,1)}=F_{(-1,2)}$ is not normal in $F$. This shows that the groups $G^{\gamma}$
and $G_{\gamma}$ of theorem \ref{structuretheorem} are not always normal in $G$.
 \end{Ex}

 We can also reformulate Theorem \ref{structuretheorem} in terms of C-relations:
 
 \begin{Thm}
  Let $(G,C)$ be a C-group. There exists a valuation $v:G\to\Gamma\cup\{\infty\}$ such that the following 
  holds:\begin{enumerate}
         \item For any $\gamma\in\Gamma$, $C$ induces a C-relation $C_{\gamma}$ on the quotient 
         $G^{\gamma}/G_{\gamma}$ defined by the formula 
         $C_{\gamma}(fG_{\gamma},gG_{\gamma},hG_{\gamma})\Leftrightarrow f\ih\notin G_{\gamma}\wedge(g\ih\in G_{\gamma}\vee C(f,g,h))$.
         \item For each $\gamma\in\Gamma$, $C_{\gamma}$ is of elementary type.
         \item If $\gamma\leq \delta$, if $C_{\gamma},C_{\delta}$ are both valuational, 
 then there exists $\alpha$ between $\gamma$ and $\delta$ such that
 $C_{\alpha}$ is order-type.
        \end{enumerate}

 \end{Thm}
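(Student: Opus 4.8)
The plan is to deduce this theorem directly from Theorem \ref{structuretheorem} by passing through the bijective correspondence between compatible C-relations and C-q.o's established in Proposition \ref{corrbij}. First I would let $\precsim$ be the C-q.o induced by $C$ (Definition \ref{deffondamentale}), so that $x\precsim y\Leftrightarrow\neg C(x,y,1)$ and, by Proposition \ref{corrbij}, $C(f,g,h)\Leftrightarrow g\ih\precnsim f\ih$. Applying Theorem \ref{structuretheorem} to $(G,\precsim)$ produces the type-valuation $v:G\to\Gamma\cup\{\infty\}$, and I would take this same $v$ for the present statement. By part (i) of that theorem each $G_{\gamma}$ is a strictly convex normal subgroup, so Proposition \ref{strictconvexquotientProp} guarantees that $\precsim$ induces a C-q.o $\precsim_{\gamma}$ on $G^{\gamma}/G_{\gamma}$; I would then define $C_{\gamma}$ to be the unique compatible C-relation inducing $\precsim_{\gamma}$, which exists by Proposition \ref{corrbij}.

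The heart of the argument is verifying that this $C_{\gamma}$ is given by the displayed formula, which is where I expect the only real work to lie. By Proposition \ref{corrbij} applied inside the quotient, $C_{\gamma}(fG_{\gamma},gG_{\gamma},hG_{\gamma})$ holds if and only if $g\ih G_{\gamma}\precnsim_{\gamma}f\ih G_{\gamma}$, and I would expand this strict inequality using the explicit description of $\precsim_{\gamma}$ from Proposition \ref{strictconvexquotientProp}, namely $aG_{\gamma}\precsim_{\gamma}bG_{\gamma}\Leftrightarrow(a\in G_{\gamma})\vee(b\notin G_{\gamma}\wedge a\precsim b)$. Unwinding $g\ih G_{\gamma}\precsim_{\gamma}f\ih G_{\gamma}$ together with the negation of the reverse inequality (using totality of $\precsim$) collapses, after splitting on whether $g\ih\in G_{\gamma}$ under the standing constraint $f\ih\notin G_{\gamma}$, to the single condition $f\ih\notin G_{\gamma}\wedge(g\ih\in G_{\gamma}\vee g\ih\precnsim f\ih)$. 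Substituting the identity $g\ih\precnsim f\ih\Leftrightarrow C(f,g,h)$ noted above yields exactly $f\ih\notin G_{\gamma}\wedge(g\ih\in G_{\gamma}\vee C(f,g,h))$, establishing (1).

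Parts (2) and (3) would then be immediate translations. By the definitions of ``elementary type'', ``valuational'' and ``order-type'' for C-q.o's, a C-relation $C_{\gamma}$ has one of these properties exactly when the C-q.o $\precsim_{\gamma}$ it induces does; hence (2) is just part (ii) of Theorem \ref{structuretheorem} and (3) is just part (iii), read through this dictionary. The only genuinely computational obstacle is the Boolean simplification in the middle paragraph, and care is needed there to keep track of the membership conditions on $f\ih$ and $g\ih$ relative to $G_{\gamma}$ when negating the reverse inequality; everything else is a transcription of results already proved for C-q.o.g's.
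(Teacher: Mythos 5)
Your proposal is correct and is exactly the translation the paper intends: the paper states this theorem immediately after Theorem \ref{structuretheorem} with no separate proof, treating it as the reformulation through the bijection of Proposition \ref{corrbij}, and your Boolean unwinding of $g\ih G_{\gamma}\precnsim_{\gamma}f\ih G_{\gamma}$ via the quotient formula of Proposition \ref{strictconvexquotientProp} correctly fills in the omitted computation (noting only that \ref{strictconvexquotientProp} must be applied with $G^{\gamma}$ as the ambient group, since $G_{\gamma}$ need not be normal in $G$).
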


\section{C-minimal groups}\label{Cminsection}

 We now want to interpret the results on C-minimal groups given in \cite{Macstein} in view of our structure theorem 
 \ref{structuretheorem}. Note that the C-relations considered in \cite{Macstein} are dense, i.e they satisfy the extra axioms: 
$x\neq y\Rightarrow\exists z, (z\neq y\wedge C(x,y,z))$ and $\exists x\exists y, y\neq x$.
 The authors of 
 \cite{Delon} and \cite{Adeleke2} described how to obtain the canonical tree associated to a given C-structure
 (see Proposition 1.5 in \cite{Delon} and Theorem 12.4 in \cite{Adeleke2}). 
 If $(M,C)$ is a C-structure, then we can define a partial quasi-order $\precsim$ on the set 
 $M^2$ by 
 $(x,y)\precsim (u,v)\Leftrightarrow\neg C(u,x,y)\wedge \neg C(v,x,y)$. We then
 define the canonical tree $(\T,\leq)$ of $(M,C)$ as the quotient $\T:=M^2/\sim$ endowed with the partial order 
 $\leq$ induced by $\precsim$. To simplify notations, we will refer to elements of $\T$ by one of their 
 representatives in $M^2$. Note that $(x,y)=(y,x)$ for any $x,y$. 
 
 If $(G,C)$ is a C-group with canonical tree $\T$, then we see that $G$ induces a right action on $\T$ by
 $(x,y).g:=(xg,yg)$. Note that the partial order on $\T$ is compatible with this action in the sense that 
 $(x,y)\leq (u,v)\Rightarrow(x,y).g\precsim (u,v).g$ (this follows directly from the fact that $C$ is compatible).
 The authors of \cite{Macstein} 
 described dense C-minimal groups by looking at the orbits of this action. They distinguished three cases:

 \begin{enumerate}
  \item All orbits are antichains.
  \item One orbit is a non-trivial chain. 
  \item No orbit is a non-trivial chain and there exists one non-trivial orbit which is not an antichain.
 \end{enumerate}

 Now let $\precsim$ be the C-q.o associated to $C$.
 We want to interpret this trichotomy in terms of $\precsim$.
More precisely, we want to see how the type of elements $x$ and $y$ influences 
the orbit of $(x,y)$. 
 Note that  the partial order $\leq$ of $\T$ is given by \newline
 $(x,y)\leq (u,v)\Leftrightarrow \{u\iy,v\iy\}\precsim x\iy$.
 We first want to describe the structure of the tree $\T$ in the order-type case: 

\begin{Lem}\label{chainordertype}
 Assume $(G,\precsim)$ is an order-type C-q.o.g and set 
 $\chain:=\{(x,y)\in\T\mid x\neq y\}$. Then $\chain$ is a non-trivial chain and an orbit under the action of $G$.
\end{Lem}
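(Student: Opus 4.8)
The plan is to trade the branch points for group elements via an explicit parametrisation. Since $\precsim$ is order-type, Proposition \ref{ordertypegroup} gives a decomposition $G=G^+\sqcup G^-\sqcup\{1\}$ and a compatible group order $\leq$ with $G^-<1<G^+$, for which $\precsim$ is recovered by $1\precnsim(G^-,\precsim_t)\precnsim(G^+,\leq)$. The single fact about $\precsim$ I will use repeatedly is: \emph{if $p>1$ and $q\leq p$ then $q\precsim p$} (check the three cases $q>1$, $q=1$, $q\in G^-$ against (i)--(iii) before Proposition \ref{ordertypegroup}). Throughout I use the displayed description $(x,y)\leq(u,v)\Leftrightarrow\{u\iy,v\iy\}\precsim x\iy$ and the symmetry $(x,y)=(y,x)$, so that every element of $\chain$ has a representative $(x,y)$ with $y<x$ in the group order. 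Also note that a non-trivial orderable group has no least element, so for every $x\in G$ there is some $y<x$.

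The key step is a \textbf{stabilisation lemma}: if $y,y'<x$ then $(x,y)\sim(x,y')$ in $\T$. Indeed, applying the formula, $(x,y)\leq(x,y')$ amounts to $\{x\iy,y'\iy\}\precsim x\iy$; here $x>y$ gives $x\iy>1$ and $y'<x$ gives $y'\iy\leq x\iy$, so the fact above yields $y'\iy\precsim x\iy$, while $x\iy\precsim x\iy$ is trivial. The reverse inequality is symmetric in $y,y'$. Hence $[(x,y)]$ depends only on $x=\max(x,y)$, and I may define $\beta(x):=[(x,y)]$ for any $y<x$; this is defined on all of $G$, and by the reduction above every element of $\chain$ has the form $\beta(\max(x,y))$, so $\beta\colon G\to\chain$ is onto.

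The orbit assertion then follows from \textbf{equivariance}: writing $\beta(m)=[(m,y)]$ with $y<m$, the right action gives $\beta(m).g=[(mg,yg)]$, and $yg<mg$ because $\leq$ is a group order, so $mg=\max(mg,yg)$ and $\beta(m).g=\beta(mg)$. As $g$ runs through $G$ so does $mg$, hence the orbit of any $\beta(m)$ is $\beta(G)=\chain$; thus $\chain$ is a single orbit.

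For the chain assertion I will show $\beta$ is strictly order-reversing. Given $m\leq m'$, choose $c<m$ (hence $c<m'$), so that $\beta(m)=[(m,c)]$ and $\beta(m')=[(m',c)]$; the formula reduces $(m',c)\leq(m,c)$ to $\{mc^{-1},1\}\precsim m'c^{-1}$, which holds by the fact above since $m'c^{-1}>1$ and $mc^{-1}\leq m'c^{-1}$. The same computation with $m,m'$ swapped shows $(m,c)\leq(m',c)$ fails when $m<m'$ (it would force $m'c^{-1}\leq mc^{-1}$ inside $G^+$), so $\beta$ reverses $\leq$ strictly and is in particular injective. Therefore $\leq$ is total on $\chain$, which is order-isomorphic to $G$ reversed, and since $G$ is non-trivial (indeed infinite, being orderable) $\chain$ has at least two comparable distinct points, i.e.\ is a non-trivial chain. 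The main obstacle is the stabilisation lemma together with making the right reductions to representatives $(x,y)$ with $y<x$: once one realises that a branch point is determined by the larger of its two coordinates, both claims become transport of the group order, but spotting that invariant (and that it is exactly $\max$, not, say, the difference $x\iy$) is the conceptual heart of the argument.
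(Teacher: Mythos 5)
Your proof is correct and takes essentially the same route as the paper: both arguments reduce the tree order on $\chain$ to the reversed group order on $\max(x,y)$ (your stabilisation lemma and the strict order-reversal of $\beta$ correspond to the paper's equivalence $(x,y)\leq(u,v)\Leftrightarrow v\leq y$ and its consequence $(\ast)$), and both obtain transitivity of the action from the right translation carrying one maximum to the other. The packaging via the equivariant surjection $\beta\colon G\to\chain$, and your use of the ``key fact'' on the $G^+$ side where the paper invokes triviality of $\precsim$ on $G^-$, are only cosmetic differences.
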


\begin{proof}
 Denote by $\leq$ the underlying order on $G$.
 Let $(x,y),(u,v)\in\chain$. Note that since $(x,y)=(y,x)$, we can assume 
 that $x<y$ and $u<v$. We have 
$(x,y)\leq(u,v)\Leftrightarrow \{u\iy,v\iy\}\precsim x\iy$. 
We saw in the proof of Proposition \ref{ordertypegroup} that $x<y$ is equivalent to $x\iy\in G^-$. Since 
$\precsim$ is trivial on $G^-$, $\{u\iy,v\iy\}\precsim x\iy$ is equivalent 
to $u\iy,v\iy\in G^-\cup\{1\}$. This in turn  is equivalent to $u\leq y\wedge v\leq y$.
  Since $u<v$, this is equivalent to $v\leq y$. Thus, we have 
$(x,y)\leq(u,v)\Leftrightarrow v\leq y$ and it follows that $\chain$ is a chain. Note that it also shows:\newline
$(\ast)$  $(u<v\wedge x<y\wedge y=v)\Rightarrow (x,y)=(u,v)$.\newline 
 Now we want to show that $(x,y)$ and $(u,v)$ are in the same orbit. 
Set $g:=\iy v$. Note that by definition of order-type C-relations in Example \ref{exempleCgroupe}(a), 
$<$ is compatible with the group operation, so we have  
$xg<yg$. Moreover, we have $u<v$ and $yg=v$. By $(\ast)$, this implies that we have $(x,y).g=(u,v)$.
\end{proof}

 \begin{Lem}\label{treequotient}
  Let $(G,\precsim)$ be a C-q.o.g (not necessarily minimal) and $g\in G$. Let 
  $(\T,\leq)$ be the canonical tree associated to $G^g$ and $(\T',\leq')$ the canonical tree 
  associated to $G^g/G_g$. If $x,y,u,v\in T_g$ are such that 
  $x\iy,u\iv\in T_g$, then 
  $(x,y)\leq(u,v)$ if and only if $(xG_g,yG_g)\leq'(uG_g,vG_g)$.
 \end{Lem}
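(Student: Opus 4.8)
The plan is to reduce the statement to a single comparison between $\precsim$ on $G^g$ and the quasi-order it induces on the quotient $G^g/G_g$. First I would record the two order formulas. By the computation made just before Lemma \ref{chainordertype}, the order on the canonical tree $\T$ of $G^g$ reads $(x,y)\leq(u,v)\Leftrightarrow\{u\iy,v\iy\}\precsim x\iy$, and the same formula applied inside $G^g/G_g$ (where one multiplies and inverts componentwise modulo $G_g$) gives $(xG_g,yG_g)\leq'(uG_g,vG_g)\Leftrightarrow\{u\iy G_g,v\iy G_g\}\precsim x\iy G_g$. Here $G^g$ is a group and $G_g$ a strictly convex normal subgroup of it (Propositions \ref{otypequotient} and \ref{Ggvtypecase}), so by Proposition \ref{strictconvexquotientProp} the quotient quasi-order is $aG_g\precsim bG_g\Leftrightarrow (a\in G_g)\vee(b\notin G_g\wedge a\precsim b)$. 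Since $x,y,u,v\in T_g\subseteq G^g$, all the products displayed above lie in $G^g$, so both formulas are meaningful.

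The key step would be the comparison fact: for every $a\in G^g$ and every $b\in G^g\setminus G_g$, one has $a\precsim b\Leftrightarrow aG_g\precsim bG_g$. To prove it I would use that $G^g=G_g\sqcup T_g$ is a disjoint union with $G_g\precsim T_g$ — in the $o$-type case this is the chain $G_g\precsim T_g^-\precnsim T_g^+$ from Proposition \ref{Ggstrictconvex}, and in the v-type case it is immediate from the definition $G_g=\{h\mid h\precnsim T_g\}$. Since $b\notin G_g$ forces $b\in T_g$, the argument splits: if $a\in G_g$ then $a\precsim b$ holds (because $G_g\precsim T_g$) and $aG_g\precsim bG_g$ holds by the first disjunct of the quotient formula; if $a\notin G_g$ then the quotient formula collapses to its second disjunct $a\precsim b$. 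Thus the two sides are equivalent in every case.

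Finally I would apply this fact with $b:=x\iy$, which lies in $T_g$ by hypothesis and hence in $G^g\setminus G_g$, and with $a:=u\iy$ and then $a:=v\iy$ (both in $G^g$). This yields $u\iy\precsim x\iy\Leftrightarrow u\iy G_g\precsim x\iy G_g$ and $v\iy\precsim x\iy\Leftrightarrow v\iy G_g\precsim x\iy G_g$. Conjoining the two equivalences and reading them back through the order formulas of the first paragraph gives exactly $(x,y)\leq(u,v)\Leftrightarrow(xG_g,yG_g)\leq'(uG_g,vG_g)$.

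I expect no serious obstacle here: the entire content sits in the comparison fact, and the only points that need care are verifying the disjointness $G^g=G_g\sqcup T_g$ together with $G_g\precsim T_g$ (so that the hypothesis $x\iy\in T_g$ genuinely places $x\iy$ outside $G_g$), and correctly identifying the quotient products $u\iy G_g$, $v\iy G_g$, $x\iy G_g$ with $(uG_g)(yG_g)^{-1}$, $(vG_g)(yG_g)^{-1}$, $(xG_g)(yG_g)^{-1}$. The membership hypotheses $x,y,u,v\in T_g$ and $u\iv\in T_g$ serve mainly to ensure that all the displayed elements lie in $G^g$ and that the relevant pairs are genuine elements of the two trees.
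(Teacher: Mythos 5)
Your proposal is correct and follows essentially the same route as the paper: the paper's own proof is a one-line application of the quotient formula from Proposition \ref{strictconvexquotientProp} together with the observation that $x\iy\notin G_g$, which is exactly your ``comparison fact'' specialized to $b:=x\iy$. You merely spell out the supporting details (the decomposition $G^g=G_g\sqcup T_g$ with $G_g\precsim T_g$) that the paper leaves implicit.
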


 \begin{proof}
 By definition of the q.o on $G^g/G_g$ and since $x\iy\notin G_g$, we have \newline
  $\{u\iy,v\iy\}\precsim x\iy$ if and only if $\{u\iy G_g, v\iy G_g\}\precsim x\iy G_g$.
 \end{proof}

 \begin{Lem}\label{orbitotype}
  Let $(G,\precsim)$ be a C-q.o.g.
  Let $x\in G$ and $y\in G^x$. The following holds:
  \begin{enumerate}[(i)]
   \item If  $x\iy$ is v-type, then the orbit of $(x,y)$ under the action of $G$ is an antichain.
   \item If $x\iy\notin T_x$, then the orbit of $(x,y)$ under the action of $G^x$ is not a chain.
   \item The orbit of $(x,y)$ under the action of $G^x$ is a non-trivial chain if and only if $x$ is o-type and 
   $x\iy\in T_x$.
  \end{enumerate}

 \end{Lem}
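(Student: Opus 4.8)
The plan is to reduce all three parts to a single comparison formula. Since $x,y\in G^x$ and $G^x$ is a group (Propositions \ref{otypequotient} and \ref{Ggvtypecase}), right translation by $\iy$ is a tree automorphism carrying the $G$-orbit (resp.\ the $G^x$-orbit) of $(x,y)$ onto the orbit of $(a,1)$ under the same group, where $a:=x\iy$. So I may assume $y=1$ and study the orbit of $(a,1)$. Writing the tree order as $(p,q)\le(r,s)\Leftrightarrow\{rq^{-1},sq^{-1}\}\precsim pq^{-1}$ and setting $w:=h\ig$, a one-line computation gives
\[
(ag,g)\le(ah,h)\iff aw\precsim a\ \wedge\ w\precsim a,\qquad
(ah,h)\le(ag,g)\iff aw^{-1}\precsim a\ \wedge\ w^{-1}\precsim a .
\]
Two orbit points are thus equal iff all four inequalities hold, comparable iff one of the two conjunctions holds, and incomparable otherwise; everything reduces to deciding, for each $w$, which conjunction is true.

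For (i), $a$ is v-type. If $w\precnsim a$, then Proposition \ref{toutesequivalences} yields $w^{-1}\precnsim a$ and $aw\sim a\sim aw^{-1}$, so all four inequalities hold and the two points coincide; if $a\precnsim w$, then $w\precsim a$ fails and, by the same proposition, $w^{-1}\precsim a$ fails as well, so the points are incomparable. The only case that can create a strict comparison is therefore $w\sim a$, and this is where I expect the real work to lie: one must use that $\precsim$ is valuational-like on $T_a$ (Proposition \ref{tgvtype}, Remark \ref{lookslikeexplain}) to bound $aw$ and $aw^{-1}$ by $a$. The genuine obstacle here is welding: if $cl(a)$ contains o-type elements (as in Example \ref{examplesfondamentales}(b)), then $w\sim a$ can be o-type with $a\precnsim w^{-1}$, and ruling out a resulting strict comparison forces one to invoke the precise description of the welding class through Propositions \ref{tgvtypeconvex} and \ref{Ggstrictconvex}. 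I regard this welding analysis as the main difficulty of the whole lemma.

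For (ii) I note that $a\in G^x$ and $a\notin T_x$ force $a\in G_x$, i.e.\ $a$ sits strictly below the component $T_x$; choosing $w\in T_x^+$ (which is non-empty since $x$ or $\ix$ lies in it) we have $a\precnsim w$, so the first conjunction fails, and generically $a\precnsim w^{-1}$ as well, so the second fails too, giving an incomparable distinct pair $(a,1),(aw,w)$ and hence a non-chain. For (iii), the forward direction combines (i) and (ii): a non-trivial chain is not an antichain, so (i) excludes $a$ being v-type, while (ii) forces $a\in T_x$; since then $T_x=T_a$ consists of o-type elements and contains $x$, the element $x$ is o-type. For the converse I pass to $\bar G:=G^x/G_x$, which is order-type by Proposition \ref{otypequotient}; taking $a$ to be $o^+$-type without loss of generality, Lemma \ref{lemfond}(iii) handles $w\in G_x$ (the points coincide), and the ordered-group arithmetic of $\bar G$ shows that for $w\in T_x$ exactly one conjunction holds — the first when $\bar w<\bar 1$, the second when $\bar w>\bar 1$ — so every pair is comparable and the orbit is a chain; non-triviality follows from a single strict comparison such as $w=a$, where $(a^2,a)\le(a,1)$ holds but its reverse fails. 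This last step can equally be packaged through Lemmas \ref{treequotient} and \ref{chainordertype}, by identifying the orbit with the chain $\chain$ in the canonical tree of the order-type group $\bar G$.
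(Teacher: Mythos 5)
Your normalization to $y=1$ and the comparison formulas $\{aw,w\}\precsim a$ versus $\{aw^{-1},w^{-1}\}\precsim a$ are correct and are essentially the computation the paper performs without the normalization, and your treatment of the converse in (iii) via Lemmas \ref{treequotient} and \ref{chainordertype} is exactly the paper's route. However, part (i) is not proved, and the gap is not where you locate it. You dispatch the case $a\precnsim w$ by asserting that $w^{-1}\precsim a$ must then fail; Proposition \ref{toutesequivalences} only excludes $w^{-1}\precnsim a$ (which would give $w\precnsim a$), not $w^{-1}\sim a$. The latter is precisely a welding configuration: $w$ is o$^+$-type and the v-type element $a$ lies in $F_w=cl(w^{-1})\backslash T_w$. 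In that case Proposition \ref{Ggstrictconvex} gives $a\in\max(G_w)$, so $\{a,a^{-1}\}\precsim w^{-1}\precnsim w$, and Lemma \ref{lemfond}(iii) yields $aw^{-1}\sim w^{-1}\sim a$; hence $\{aw^{-1},w^{-1}\}\precsim a$ holds while $\{aw,w\}\precsim a$ fails, and the two orbit points are \emph{strictly} comparable. So the ``welding analysis'' you defer cannot be carried out: in Example \ref{examplesfondamentales}(b), taking $x=(0,1)$, $y=(0,0)$, $g=(1,0)$, one checks directly that $(xg,yg)<(x,y)$ in the canonical tree, so the orbit of this pair with $xy^{-1}$ v-type is not an antichain. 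The case you explicitly leave open is exactly the case where the claim breaks down, and it cannot be repaired without an additional hypothesis excluding welding at the relevant classes. (For what it is worth, the paper's own proof of (i) stumbles at the same spot, namely at the assertion that an o-type element $\precsim xy^{-1}$ must lie in $G_{xy^{-1}}$; this fails for a welded o-type element equivalent to $xy^{-1}$.)

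The same configuration undermines the word ``generically'' in your argument for (ii): if $a\in F_x$, then $a\sim w^{-1}$ for your chosen $w\in T_x^+$, the computation above gives $\{aw^{-1},w^{-1}\}\precsim a$, and the pair you exhibit is comparable rather than incomparable, so a different witness is needed (the paper instead takes $g=y^{-1}$ and relies on the strict inequality $xy^{-1}\precnsim\{x,y^{-1}\}$, which itself requires care at welded classes); note also that your witness $w\in T_x^+$ presupposes $x$ o-type, whereas (ii) must also cover v-type $x$. In summary: away from welding your outline reproduces the paper's argument in a cleaner normalized form, but the welding case is mislocated, left unproved, and is in fact the case where both your argument and the paper's fail.
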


 \begin{proof}
 \begin{enumerate}[(i)]
  \item Assume that $x\iy$ is v-type and let $g\in G$. We want to show that 
  $(x,y)$ and $(xg,yg)$ are either incomparable or equal. 
   Assume 
  $(xg,yg)\leq (x,y)$. This means $\{x\ig\iy, y\ig\iy\}\precsim x\iy$. Since $x\iy$ is v-type, 
  $y\ig\iy\precsim x\iy$ implies $yg\iy\precsim x\iy$ 
  (indeed, if $y\ig\iy$ is v-type, then $yg\iy\sim y\ig\iy$. If $y\ig\iy$ is o-type, then we have 
  $y\ig\iy\in G_{x\iy}$. Since $G_{x\iy}$ is a group, this implies $yg\iy\in G_{x\iy}$, hence $yg\iy\precsim x\iy$). 
  Moreover, if we conjugate the inequality $y\ig\iy\precsim x\iy$ by 
  $x\iy$, then we obtain $xg\ix\precsim x\iy\sim y\ix$. By $(CQ_2)$, $xg\ix\precsim y\ix$ implies 
  $xg\iy\precsim x\iy$. Thus, we have 
  $\{xg\iy,yg\iy\}\precsim x\iy$, which means $(x,y)\leq (xg,yg)$, so 
  $(x,y)$ and $(xg,yg)$ are equal. Now if we assume that $(x,y)\leq (x,y).g$ instead of $(x,y).g\leq (x,y)$ at the beginning, then by compatibility of the action we have 
$(x,y).\ig\leq (x,y)$, which brings us back to the previous case.
  \item Assume $x\iy\notin T_x$. Since $x\notin G_x$ and $x,\iy\in G^x$, it follows that $y\notin G_x$, hence $y\in T_x$. We thus have 
  $x\iy\precnsim \{x,\iy\}$.
  Taking $g:=\iy$, we cannot have $yg\iy\precsim x\iy$ and we also cannot have $x\ig\iy\precsim x\iy$. Therefore, neither 
  $(x,y)\leq(xg,yg)$ nor $(xg,yg)\leq(x,y)$ is true.
  \item If the orbit of $(x,y)$ under $G^x$ is a chain, then by (ii) we must have 
  $x\iy\in T_x$. By (i), $x$ cannot be v-type. Conversely, assume $x$ is o-type with 
  $x\iy\in T_x$. Since $G^x/G_x$ is order-type, and since $xG_g\neq yG_g$, it follows from Lemma \ref{chainordertype} that 
  the orbit of $(xG_g,yG_g)$ under the action of $G^g/G_g$ is a non-trivial chain. 
 It then follows from lemma \ref{treequotient}
  that the orbit of $(x,y)$ under $G^x$ is also a non-trivial chain.  
  \end{enumerate}
 \end{proof}

 \begin{Prop}\label{orbitclass}
  Let $(G,\precsim)$ be a C-q.o.g. The following holds:
  \begin{enumerate}[(i)]
       \item All orbits are antichains  if and only if every element if v-type.
       \item There exists an orbit which is a chain if and only if there exists 
       $g\in G$ o-type such that $T_g$ is maximal in the set of type-components of $G$ (for the order given in Proposition 
       \ref{ordertypecomponents}).
    \end{enumerate}
 \end{Prop}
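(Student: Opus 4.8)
The plan is to reduce both equivalences to Lemma \ref{orbitotype} by exploiting the right action: since $(x,y).\iy=(x\iy,1)$, every orbit contains a representative of the form $(a,1)$, and for such a representative the quantity ``$x\iy$'' appearing in Lemma \ref{orbitotype} is simply $a$. The second ingredient I would isolate first is the dictionary between $G^g$ and the type-valuation: because the type-components are totally ordered (Proposition \ref{ordertypecomponents}) and $G^g=\{h\mid T_h\le T_g\}$, the subgroup $G^g$ equals all of $G$ exactly when $T_g$ is maximal. This is what will let me upgrade statements about the $G^g$-orbit (which Lemma \ref{orbitotype} controls) to statements about the full $G$-orbit.

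For (i), the backward direction is immediate: if every element is v-type then for any off-diagonal $(x,y)$ the element $x\iy\ne1$ is v-type, so Lemma \ref{orbitotype}(i) makes the orbit an antichain, while the diagonal orbit $\{(z,z)\mid z\in G\}$ is an antichain by a one-line computation giving $(z,z)\le(z',z')\Leftrightarrow z'\iz\precsim1$ and invoking $(CQ_1)$. For the forward direction I would argue by contraposition: given an o-type $g$, the pair $(g,1)$ satisfies $g\in T_g$, so Lemma \ref{orbitotype}(iii) makes its $G^g$-orbit a non-trivial chain; since this sits inside the $G$-orbit, that $G$-orbit contains two distinct comparable elements and is not an antichain.

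For (ii), the backward direction is again short: if $g$ is o-type with $T_g$ maximal then $G^g=G$ by the dictionary above, and Lemma \ref{orbitotype}(iii) applied to $(g,1)$ produces a non-trivial chain orbit. The substance is the forward direction. Here I would normalize the given chain-orbit to the orbit $O$ of $(a,1)$, note $a\ne1$ (otherwise $O$ is the diagonal antichain), and use Lemma \ref{orbitotype}(i) to conclude that $a$ is o-type (a v-type $a$ would make $O$ an antichain). It then remains to show $T_a$ is maximal, which I would do by contradiction: if $T_a$ is not maximal, pick $g\notin G^a$ and show that $(a,1)$ and $(a,1).g=(ag,g)$ are incomparable, contradicting that $O$ is a chain. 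The incomparability is a direct check against the order $(x,y)\le(u,v)\Leftrightarrow\{u\iy,v\iy\}\precsim x\iy$ recalled above: the relation $(a,1)\le(ag,g)$ reduces to $\{ag,g\}\precsim a$ and $(ag,g)\le(a,1)$ reduces to $\{a\ig,\ig\}\precsim a$, and both fail once I know $a\precnsim g$ and $a\precnsim\ig$.

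The main obstacle I anticipate is precisely obtaining these two \emph{strict} inequalities $a\precnsim g$ and $a\precnsim\ig$: a naive $g$ with $T_g$ above $T_a$ could be $\sim$-equivalent to $a$ at a welding boundary, which would only yield $a\precsim g$ and break the incomparability check. The point that resolves this is that $a$ is already known to be o-type, so by Proposition \ref{Gginitialsegment} the set $G^a$ is an initial segment of $G$ and by Proposition \ref{otypequotient} it is a subgroup. Then $g\notin G^a$ forces $a\precnsim g$ strictly (an initial segment containing $a$ cannot omit anything lying $\precsim$-below it), and being a subgroup forces $\ig\notin G^a$ as well, whence $a\precnsim\ig$. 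This is why welding, which elsewhere only forced the components to be strictly rather than plainly convex, causes no trouble here.
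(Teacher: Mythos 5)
Your proposal is correct and follows essentially the same route as the paper: both directions of each item are reduced to Lemma \ref{orbitotype}, and the forward direction of (ii) is settled by exhibiting, for $g\notin G^x$, the incomparable pair $(x,y)$ and $(x,y).g$ using that $G^x$ is a subgroup and an initial segment in the o-type case. Your normalization of each orbit to a representative $(a,1)$ and the explicit dictionary ``$G^g=G\Leftrightarrow T_g$ maximal'' are only presentational refinements of what the paper does implicitly.
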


 \begin{proof}
  If every orbit is an antichain, then by Lemma \ref{orbitotype}(iii) every element of $G$ must be v-type (otherwise we can 
  always choose $x,y\in G$ o-type with $x\iy\in T_x$, for example choose any $o^+$-type element $x$ and $y:=x^2$). The converse follows from 
  \ref{orbitotype}(i).  
  Now assume that $x\in G$ is an o-type element such that $T_g$ is maximal in the set of type-components of $G$.
  Take $y\in T_x$ with $x\iy\in T_x$.
  It follows from Lemma \ref{orbitotype}(iii) that the orbit of $(x,y)$ under $G$ is a chain. Conversely, assume there 
  is an orbit of an element $(x,y)$ which is a chain. Since $(x,y)=(y,x)$, we can assume without loss of generality that $y\precsim x$, hence $y\in G^x$.
 By Lemma \ref{orbitotype}(iii), this implies in particular that 
  $x$ is o-type with $x\iy\in T_x$. Assume that there is some $g\notin G^x$. We then have 
  $xg\iy,yg\iy,x\ig\iy,y\ig\iy\notin G^x$, so neither 
  $(x,y)\leq(xg,yg)$ nor $(xg,yg)\leq (x,y)$ can be true.
 \end{proof}

 We can now reformulate Theorems 4.4, 4.8 and 4.9 of \cite{Macstein} into the following result:
 
 \begin{Thm}\label{Cminimal}
 
   Let $(G,\precsim)$ be a C-minimal C-q.o.g and assume that $C$ is a dense C-relation. Then exactly one of the following holds:
   \begin{enumerate}[(i)]
    \item $\precsim$ comes from a valuation $v:G\to \Gamma\cup\{\infty\}$. In that case, we have the following:
  \begin{enumerate}[(1)]
   \item For any $\gamma\in\Gamma$, $G_{\gamma}$ and $G^{\gamma}$ are normal in $G$.
   \item The quotient 
  $G^{\gamma}/G_{\gamma}$ is abelian for all but finitely many $\gamma\in\Gamma$.
  \item If $G^{\gamma}/G_{\gamma}$ is infinite, then it is elementary abelian or divisible abelian. If it is divisible, then
  $G^{\gamma}$ is also abelian.
  \item There is a definable abelian subgroup $H$ of $G$ such that $G/H$ has finite exponent.
  \end{enumerate}
  \item  There exists an o-type element $g\in G$ such that $T_g$ is maximal in the set of type-components of $G$.
  In that case $G$ is abelian and divisible,
  $G_g$ is C-minimal and $G^g/G_g$ is o-minimal.
  \item $G$ contains o-type elements, but $T_g$ is never maximal for any $g$ o-type. In that case there exists 
  $g\in G$ such that the following holds:
  \begin{enumerate}[(1)]
  \item The final segment $\{h\in G\mid g\precsim h\}$ only contains v-type elements.
  \item There is a definable subgroup $H$ of $G$ such that $G/H$ has finite exponent.
  \end{enumerate}
  
   \end{enumerate}
 \end{Thm}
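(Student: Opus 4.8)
The plan is to reduce the theorem to the trichotomy established by Macpherson and Steinhorn in \cite{Macstein} for the action of $G$ on its canonical tree $\T$, and to translate each of their three cases into the language of type-components by means of Propositions \ref{vtypegroup} and \ref{orbitclass}. Recall that C-minimality is a property of $(G,C)$, and that by Proposition \ref{corrbij} the C-relation $C$ and the C-q.o $\precsim$ determine one another, so the hypothesis transfers unambiguously to $(G,\precsim)$. First I would observe that the three cases (i), (ii), (iii) of the statement are mutually exclusive and jointly exhaustive purely by their formulations: either every element is v-type (case (i)), or $G$ contains an o-type element, and in the latter situation either some o-type type-component $T_g$ is maximal for the order of Proposition \ref{ordertypecomponents} (case (ii)) or none is (case (iii)). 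This already yields the ``exactly one'' assertion, so the remaining work is to attach the correct conclusions to each case by matching it with the corresponding case of \cite{Macstein}.

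For case (i), Proposition \ref{vtypegroup} shows that every element is v-type if and only if $\precsim$ is valuational, i.e.\ comes from a valuation $v\colon G\to\Gamma\cup\{\infty\}$; moreover, by Proposition \ref{orbitclass}(i) this is exactly the situation in which all orbits on $\T$ are antichains, which is the first case of the Macpherson--Steinhorn trichotomy. I would then quote Theorem 4.4 of \cite{Macstein}, whose conclusions about the subgroups $G_\gamma,G^\gamma$, the quotients $G^\gamma/G_\gamma$, and the definable abelian subgroup $H$ are already phrased for valued groups and so transfer directly to conditions (1)--(4).

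For case (ii), Proposition \ref{orbitclass}(ii) identifies the existence of an o-type $g$ with $T_g$ maximal with the existence of a non-trivial chain orbit on $\T$, which is the second case of \cite{Macstein}; I would invoke Theorem 4.8 to obtain that $G$ is abelian and divisible. To read off the minimality statements I would use that, by Proposition \ref{otypequotient}, the quotient $G^g/G_g$ carries an order-type C-q.o, so that C-minimality of $G$ specializes, via Lemma \ref{treequotient} and the order-type analysis of Section \ref{Cqosection}, to o-minimality of $G^g/G_g$, while the residual C-minimality of $G_g$ follows from the fact that $G_g$ is the part of $G$ lying strictly below the maximal component $T_g$. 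Case (iii) is then the remaining case of the trichotomy (no non-trivial chain orbit but a non-trivial orbit that is not an antichain), obtained by elimination from the exhaustiveness of both trichotomies; here I would apply Theorem 4.9 of \cite{Macstein}, whose structural output supplies the element $g$ whose final segment $\{h\mid g\precsim h\}$ contains only v-type elements together with the definable subgroup $H$ of finite-exponent quotient.

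The main obstacle will be the faithful translation of the conclusions of \cite{Macstein}, which are stated in terms of the canonical tree and the induced action, back into statements about $\precsim$, $T_g$, $G_g$ and $G^g$. The correspondence of the three cases themselves is cheap once Propositions \ref{vtypegroup} and \ref{orbitclass} are available, but showing that C-minimality of $G$ descends to o-minimality of the ordered quotient $G^g/G_g$ in case (ii), and that the cofinal v-type behaviour in case (iii) matches the finiteness conclusions of Theorem 4.9, requires carefully matching definable sets on both sides using Lemma \ref{treequotient} and the convexity of $G^g$ established in Propositions \ref{otypequotient} and \ref{Ggvtypecase}. I expect no genuinely new idea beyond this dictionary; the delicate point is the bookkeeping of which quotient is minimal in which sense.
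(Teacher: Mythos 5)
Your proposal takes essentially the same route as the paper: both match the three cases of the statement with the Macpherson--Steinhorn trichotomy via Proposition \ref{orbitclass} and then import Theorems 4.4, 4.8 and 4.9 of \cite{Macstein}. The only step the paper spells out that you leave as ``dictionary work'' is the short argument in case (iii) extracting the v-type final segment from Theorem 4.9 --- namely taking the point $w=(g,1)$ below which all orbits are antichains, noting $\{1,g\}\precsim u$ gives $(u,1)\leq w$ for any $u$ with $g\precsim u$, and concluding via Lemma \ref{orbitotype}(iii) --- which is exactly the translation you correctly identify as the remaining bookkeeping.
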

 
\begin{proof}
 Cases (i) and (ii) are direct reformulations of theorems 4.4 and 4.8 from \cite{Macstein} using our 
 Proposition \ref{orbitclass}. For (iii), we know from Proposition \ref{orbitclass} and from Theorem 4.9 of \cite{Macstein} that there exists 
 $w:=(g,1)\in \T$, $g\in G$, such that for any $t\leq w$, the orbit of $t$ under $G$ is an antichain. 
 Since $g.1^{-1}\in T_g$, and since the orbit of $w$ under $G$ is an antichain, Lemma \ref{orbitotype}(iii) implies
  that $g$ is v-type.  Now let 
  $u\in G$ with $g\precsim u$. We  have $\{1,g\}\precsim u$. By definition of $\leq$, this implies 
$(u,1)\leq (g,1)=w$, so the orbit of $(u,1)$ under $G$ is an antichain. Since moreover $u\in T_u$, it follows 
  from Lemma \ref{orbitotype}(iii) that $u$ is v-type.
\end{proof}

\begin{Rem}
\begin{enumerate}
 \item Theorem \ref{Cminimal} shows in particular that, if $G$ is C-minimal, then the set of type-components has a maximum.
 Thus, the ``ordered'' parts cannot 
 alternate indefinitely with the ``valued'' parts. Eventually, the group has to either stay valuational-like or 
 stay order-type-like.
 \item Theorem \ref{Cminimal} leaves open the question of welding in the case of C-minimality. More precisely, 
 we don't know if it is possible to have welding in case (ii).
  \item We needed the assumption of density in Theorem \ref{Cminimal} in order to apply the results of \cite{Macstein}. In a coming paper, we will 
go further and explore the structure of  general C-minimal groups, i.e without the density assumption.
\end{enumerate}

\end{Rem}

 \
\\

 {\footnotesize FACHBEREICH MATHEMATIK UND STATISTIK, 

UNIVERSITÄT KONSTANZ,

78457, GERMANY.\\

UNIVERSITÉ PARIS DIDEROT,

IMJ-PRG,

75013 PARIS, FRANCE.\\

\emph{Email address:} gabriel.lehericy@uni-konstanz.de}
 
\end{document}